\renewcommand{\epsilon}{\varepsilon}
\newcommand{\N}{{\mathbb N}}
\newcommand{\R}{{\mathbb R}}
\newcommand{\C}{{\mathbb C}}
\def \asr {[\sqrt{a}]}
\newcommand{\erfc}{{\operatorname{erfc}}}
\renewcommand{\phi}{\varphi}
\newcommand{\hcal}{\mathcal{H}}
\newtheorem{theorem}{{Theorem}}[section]
\newtheorem{cor}[theorem]{{Corollary}}
\newtheorem{lem}[theorem]{{Lemma}}
\newtheorem{prop}[theorem]{{Proposition}}
\newenvironment{remark}{\medskip\noindent{\it Remark:\/} }{\medskip}
\newtheorem{conj}[theorem]{Conjecture}
\theoremstyle{definition}
\newtheorem{definition}[theorem]{Definition}
\numberwithin{equation}{section}
\def \N {\mathbb N}
\def \C {\mathbb C}
\def \R {\mathbb R}
\def \blambda {\bm{\lambda}}
\title[An infinite dimensional balanced embedding problem]{ An infinite dimensional balanced embedding problem II: uniqueness}
\author{Jingzhou Sun}
\thanks{The author is partially supported by NNSF of China no.11701353. }
\address{Department of Mathematics, Shantou University, Shantou City, Guangdong Province 515063, China}
\email{jzsun@stu.edu.cn}
\begin{document}
	\begin{abstract}
		This is the sequel to our first paper concerning the balanced embedding of a non-compact complex manifold into an infinite-dimensional projective space. We prove the uniqueness of such an embedding. The proof relies on fine estimates of the asymptotics of a balanced embedding.
	\end{abstract}
	
	\maketitle
	
	\tableofcontents

	\section{Introduction}
	
	In this article, we continue our study of a model problem concerning the balanced embedding of a non-compact complex manifold into a projective Hilbert space started in \cite{sun-sun-2}. 
	
	For the general set-up and motivations of this problem, the readers are referred to \cite{sun-sun-2} and the references there. The readers are also referred to \cite{RezaS-Duke}, \cite{Seyyedali2013}and \cite{Keller2014} etc. for the applications of balanced metric in algebraic geometry.
	For the statement of our results, we repeat the necessary notations.  
	
	Let $\mathcal H$ be a separable complex Hilbert space, and let $\mathbb P(\mathcal H)$ be the associated projective Hilbert space, viewed as a set. We denote by $\mathfrak h(\mathcal H)$ the space of bounded self-adjoint operators on $\mathcal H$. There exists a natural injective map 
	$$\iota: \mathbb P(\mathcal H)\rightarrow \mathfrak h(\mathcal H); [z]\mapsto \frac{z\otimes z^*}{|z|^2},$$
	where $z\otimes z^*$ is the operator defined by $y\mapsto \langle y,z\rangle z,y\in \hcal$.
	Let $X$ be a finite-dimensional (possibly non-compact) complex manifold with a volume form $d\mu$. We say a map $F: X\rightarrow \mathbb P(\mathcal H)$ is a holomorphic embedding if it is injective and locally induced by a holomorphic embedding into $\mathcal H$.
	
	We say a holomorphic embedding $F: X\rightarrow \mathcal H$ is  \emph{balanced} if it satisfies the equation
	\begin{equation}
	c(F)\equiv\int_{X} \iota(F(z)) d\mu=C\cdot\text{Id}, 
	\end{equation}
	where $C$ is a constant.
	More generally, we consider a pair $(X, D)$, where $D$ is a divisor in $X$. Let $d\mu$ be a volume form on $X$ and $d\nu$ be a volume form on $D$. We say that a holomorphic embedding $F: X\rightarrow \mathbb P(\mathcal H)$ is $(\beta, C)$-balanced for some $\beta, C\in \mathbb R$ if
	\begin{equation}\label{first balance equation} \int_X \iota(F(z)) d\mu+\beta \int_D \iota(F(z)) d\nu=C\cdot\text{Id},	
	\end{equation}
	where $\iota(F(z))$ denotes the K\"ahler form on $\mathbb P(\mathcal H)$ induced by the Fubini-Study metric.
	
	We consider the setting where $X$ is the complex plane $\mathbb C$ equipped with the standard volume form $d\mu = \frac{1}{\pi}\omega_0$, and $D$ is the origin ${0}$ equipped with the dirac measure $\delta_0$. Let $\mathcal H$ be the dual of the Hilbert space of entire holomorphic functions on $\mathbb C$ endowed with the $L^2$ inner product associated with $d\mu$. Then a basis of $\hcal^*$ induces a map $F:X \to \mathbb P(\mathcal H)$. And we are interested to know if we can find a basis so that the induced map $F$ is $(\beta,1)$-balanced, $\beta\in [0, 1)$.
	Then our main result is the following.
	
	\begin{theorem}\label{t:main}
		For $\beta\in [0, 1)$ there is a $(\beta,1)$-balanced embedding $F: X\rightarrow \mathbb P(\mathcal H)$ which is unique up to the action of $U(\mathcal H)$. 
	\end{theorem}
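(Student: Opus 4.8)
The plan is to deduce uniqueness from a convexity property of an energy functional whose critical points are precisely the balanced embeddings, combined with sharp control on the asymptotic behaviour of an arbitrary balanced embedding. Via the standard dictionary (as in \cite{sun-sun-2}) I identify a holomorphic embedding $F$ with the Hermitian inner product $H_F$ it induces on the space $V:=\hcal^*$ of entire functions, or equivalently with its reproducing kernel $K_F(z,w)$; since $U(\hcal)$ acts by isometries, two balanced embeddings are $U(\hcal)$-equivalent exactly when their reproducing kernels are proportional. Thus Theorem~\ref{t:main} is equivalent to: the $(\beta,1)$-balanced inner product on $V$ is unique up to an overall positive scalar — the one ambiguity that does not change $F$.

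First I would recall from \cite{sun-sun-2} the existence of a rotation-invariant $(\beta,1)$-balanced embedding $F_0$, given in a suitable orthonormal basis by $e^0_n(z)=\sqrt{b^0_n}\,z^n$ (for $\beta=0$ this is the Bargmann--Fock normalization $b^0_n=1/n!$, so $K_{F_0}(z,z)=e^{|z|^2}$, and for $\beta\in(0,1)$ a perturbation thereof). The decisive analytic input, and the contribution needed from the present paper, is a set of a priori estimates: the balanced equation \eqref{first balance equation} alone should force the Bergman function $K_F(z,z)$ of \emph{any} $(\beta,1)$-balanced $F$ to agree with $K_{F_0}(z,z)$ up to uniformly bounded multiplicative error, indeed with enough precision that $\log(H_{F_0}^{-1}H_F)$ lies in the operator ideal relevant below. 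The argument is a bootstrap directly off \eqref{first balance equation}: an integrability/mass estimate gives a crude two-sided bound for the contribution of each finite-dimensional block to $\int_{\C}\iota(F)\,d\mu$, which fed back through the equation sharpens the decay of the coefficients of $H_F$, and iterating pins down the asymptotics. This is the step I expect to be the main obstacle — it is where the non-compactness of $\C$ and the infinite dimension of $\hcal$ genuinely bite — while everything downstream is essentially formal once these estimates are available.

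With the asymptotics in hand I would introduce an energy functional $\mathcal Z$ on the space $\mathcal M$ of admissible inner products on $V$ (those so comparable to $H_{F_0}$), modelled on the log-determinant functional familiar from the theory of balanced metrics, but with an extra term proportional to $\beta\log K_H(0)$ reflecting the Dirac mass at $D=\{0\}$, and with the $\log\det$ renormalized in the manner dictated precisely by the decay established in the previous step, so that $\mathcal Z$ is finite on $\mathcal M$. The two properties to check are: (i) the Euler--Lagrange equation of $\mathcal Z$ on $\mathcal M$ is exactly \eqref{first balance equation} with $C=1$, so balanced embeddings are its critical points; and (ii) $\mathcal Z$ is convex along the natural geodesics $H_s$ of $\mathcal M$ (those with $\log(H_0^{-1}H_s)$ affine in $s$), and strictly convex transverse to the scaling direction $H\mapsto\lambda H$ — the latter using log-convexity of $K_H(z,z)$ in the diagonal variables and the fact that $|z|^2$ ranges over a set of positive measure in the integration.

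Finally, given two $(\beta,1)$-balanced embeddings $F,F'$, the estimates of the second step place $H_F,H_{F'}\in\mathcal M$ and keep the geodesic $H_s$ joining them inside $\mathcal M$ with $\mathcal Z(H_s)$ finite; since $F$ and $F'$ are critical points of $\mathcal Z$, the convex function $s\mapsto\mathcal Z(H_s)$ has vanishing derivative at both endpoints, hence is affine, and strict convexity transverse to scaling forces $\log(H_F^{-1}H_{F'})$ to be a scalar, i.e. $H_{F'}=\lambda H_F$. This is the asserted uniqueness up to $U(\hcal)$; that the unique balanced embedding is the rotation-invariant $F_0$ of \cite{sun-sun-2} then follows, in particular recovering its radial form $e_n(z)=\sqrt{b_n}\,z^n$. (Alternatively one may first exploit the rotation symmetry $z\mapsto e^{i\theta}z$, which preserves $d\mu$ and $D$ and therefore sends balanced embeddings to balanced embeddings, to reduce to radial embeddings: there the off-diagonal components of \eqref{first balance equation} hold automatically and the problem becomes the unique solvability of the system $b_n\int_0^\infty t^nK(t)^{-1}\,dt=1$ for $n\ge1$, $\ b_0\int_0^\infty K(t)^{-1}\,dt=1-\beta$, $\ K(t)=\sum_n b_nt^n$, which the same convexity argument settles, now applied to the sequence $\{\log b_n\}$, with the asymptotics used to make the relevant sums and the renormalization converge.)
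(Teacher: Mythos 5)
Your proposal has two genuine gaps, and they sit exactly where the real work of this problem lies. First, the a priori asymptotics in your second step are asserted, not proved. Saying that ``an integrability/mass estimate gives a crude two-sided bound \ldots which fed back through the equation sharpens the decay \ldots and iterating pins down the asymptotics'' is a description of a hope, not an argument: after the reduction to the radial problem \eqref{e-2} (which is the reduction already made in \cite{sun-sun-2}), the crude bounds one can extract directly from the equation are of the type $u/x\to 1$ and two-sided bounds on $\frac{d^2}{dt^2}\log f$ and $\lambda''(a)$ with constants that are off by many orders of magnitude (e.g.\ $1/5000a<\lambda''(a)<2\cdot 10^7/a$), and closing the gap to the sharp constants $\lambda''(a)\sim 1/a$, $h_a(x_a)\sim\sqrt{2\pi a}$ requires the dominating-function machinery and a delicate, numerically assisted iteration; for $\beta>0$ one cannot even borrow Hayman's estimate $u-x=O(\sqrt x)$ or explicit coefficient formulas as in the $\beta=0$ case. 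Nothing in your sketch explains why the feedback loop contracts, and without that your later steps have no input.

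Second, the convexity/energy-functional step is not formal in this setting, and it hides a circularity. For the endpoint-derivative argument you need the two balanced inner products to be a priori comparable (bounded multiplicative error, $\log(H_F^{-1}H_{F'})$ in a suitable ideal) so that the geodesic stays admissible, the renormalized $\mathcal Z$ is finite and $C^1$ along it, its Euler--Lagrange equation is exactly \eqref{first balance equation}, and it is strictly convex transverse to scaling. None of this is established, and the comparability of two \emph{different} solutions is essentially the statement you are trying to prove: even with the refined asymptotics, which hold for each solution separately, one does not know beforehand that the ratio $k_i=c(i)/\bar c(i)$ is bounded, let alone summably close to a constant. The paper never proves such comparability; its uniqueness argument uses only a pointwise convexity in $t$ along the linear path $f_t=tf_1+(1-t)f_0$ to get the inequalities \eqref{eqn-cabar2}--\eqref{eqn-cabar}, and then derives a contradiction from $\liminf k_i\neq\limsup k_i$ by a localized comparison at carefully chosen indices $a_1<a_2$ (Lemma \ref{lem-Q} and Lemma \ref{lem-ca1}), all powered by the refined estimates of Theorem \ref{main-1} and Corollaries \ref{cor-ta}--\ref{cor-dxa}. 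So the structure ``sharp asymptotics first, then a comparison of two solutions'' is right in spirit, but both of your key steps --- the asymptotics themselves and the global convex functional --- are missing the arguments that would make them true.
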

	Since the existence has been proved in \cite{sun-sun-2}, we only need to show the uniqueness part. As explained in \cite{sun-sun-2}, this is equivalent to  proving the uniqueness of the solution of the following equation
	\begin{equation}\label{e-2}
		\int_0^{\infty} \frac{f(sx)}{f(x)}dx=\frac{1}{1-s}-\beta, 
		\end{equation}
		for all $s\in[0,1)$, where $$f(x)=\sum_{i=0}^\infty e^{-\lambda_i}x^i,$$
	is an analytic function on $[0, \infty)$. As equation \ref{e-2} does not change when we multiply a constant on $f(x)$, we need to prove the uniqueness up to a scalar multiplication.
	
	In the case of $\beta=0$, the uniqueness is already non-trivial, and it follows from a result in analysis by Miles-Williamson \cite{MilesWill}. In \cite{loi-cn}, Cuccu-Loi proved the uniqueness of balanced metric($\beta=0$) on $\C^n$ under some symmetry condition, by using the result of Miles-Williamson \cite{MilesWill}. It is also worth mentioning that Wang \cite{2005Canonical} and Mossa-Loi \cite{loi-bundle} proved the uniqueness of balanced metrics on vector bundles on compact K\"{a}hler manifolds, which is related to algebraic stability of vector bundles. Of course, our method is very different from theirs due to the infinite-dimensional nature of our problem.

	\
	
	A corollary of our result is the continuous dependence of the solutions on $\beta$. 
	
	\begin{cor}\label{cor-main}
		Let $ ^\beta\blambda=(^\beta\lambda_i)$ be the unique solution to \eqref{e-2} with $ ^\beta\lambda_0=1$. Then for each $i$, $ ^\beta\lambda_i$ depends on continuously on $\beta$. Equivalently, the corresponding function $ ^\beta f(x)$ depends continuously on $\beta$ for $x$ in any finite interval.
	\end{cor}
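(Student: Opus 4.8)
The plan is to deduce Corollary~\ref{cor-main} from Theorem~\ref{t:main} together with a compactness argument. Fix a sequence $\beta_n \to \beta_\infty$ in $[0,1)$. By Theorem~\ref{t:main} each $\beta_n$ admits a unique (up to scaling) solution $\,^{\beta_n}\blambda = (\,^{\beta_n}\lambda_i)$ of \eqref{e-2}, normalized so that $\,^{\beta_n}\lambda_0 = 1$, with associated function $\,^{\beta_n}f(x) = \sum_i e^{-\,^{\beta_n}\lambda_i} x^i$. The first step is to establish uniform bounds: I would invoke the asymptotic estimates on balanced embeddings from the main body of the paper (the same "fine estimates" used to prove uniqueness) to show that, for $\beta$ ranging over a compact subinterval $[0,\beta_*]\subset[0,1)$ containing all the $\beta_n$ and $\beta_\infty$, the coefficients $\,^{\beta}\lambda_i$ stay within a bounded range $|\,^{\beta}\lambda_i - L_i| \le M_i$ for constants depending only on $i$ and $\beta_*$. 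This makes the sequence $(\,^{\beta_n}\blambda)_n$ precompact in the product topology on $\R^{\N}$ (equivalently, the functions $\,^{\beta_n}f$ are locally uniformly bounded and form a normal family of entire functions).

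The second step is to pass to a convergent subsequence. Choose a subsequence along which $\,^{\beta_{n_k}}\lambda_i \to \mu_i$ for every $i$; equivalently $\,^{\beta_{n_k}}f \to g$ uniformly on compact subsets of $[0,\infty)$, where $g(x) = \sum_i e^{-\mu_i} x^i$. Since each $\,^{\beta_{n_k}}f$ satisfies \eqref{e-2} with parameter $\beta_{n_k}$, I would pass to the limit inside the integral $\int_0^\infty \frac{\,^{\beta_{n_k}}f(sx)}{\,^{\beta_{n_k}}f(x)}\,dx$. Here one needs a dominated-convergence-type argument: the uniform lower bounds on $\,^{\beta}f(x)$ for $x$ bounded away from $0$, together with uniform upper bounds on the ratio $\,^{\beta}f(sx)/\,^{\beta}f(x)$ for large $x$ (again coming from the asymptotic estimates, which control the decay of $f(sx)/f(x)$ as $x\to\infty$ uniformly in $\beta\in[0,\beta_*]$), furnish an integrable dominating function independent of $k$. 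Therefore $g$ satisfies \eqref{e-2} with parameter $\beta_\infty$, and $g(0) = e^{-\mu_0} = \lim e^{-\,^{\beta_{n_k}}\lambda_0} = e^{-1}$, so $\mu_0 = 1$. By the uniqueness part of Theorem~\ref{t:main}, $g = \,^{\beta_\infty}f$, i.e. $\mu_i = \,^{\beta_\infty}\lambda_i$ for all $i$.

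The final step is a standard subsequence argument: since every subsequence of $(\,^{\beta_n}\blambda)_n$ has a further subsequence converging to the \emph{same} limit $\,^{\beta_\infty}\blambda$, the full sequence converges, $\,^{\beta_n}\lambda_i \to \,^{\beta_\infty}\lambda_i$ for each $i$. As $\beta_n\to\beta_\infty$ was arbitrary, $\beta\mapsto\,^{\beta}\lambda_i$ is continuous; and local uniform convergence of the $\,^{\beta_n}f$ on finite intervals follows because the coefficient convergence combined with the uniform coefficient bounds controls the tails of the power series uniformly.

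The main obstacle I anticipate is the passage to the limit under the integral sign over the unbounded domain $[0,\infty)$: one must rule out escape of mass both near $0$ (where $\,^{\beta}f(x)\to 1$, so this is easy) and, more delicately, near $\infty$, by producing a uniform-in-$\beta$ integrable majorant for $\,^{\beta}f(sx)/\,^{\beta}f(x)$. Precisely this uniformity is what the asymptotic analysis developed for the uniqueness proof should supply, so the corollary should follow without substantial new work once that input is in hand; the remaining delicate point is checking that the uniform estimates proved (presumably) for a fixed $\beta$ in fact hold locally uniformly in $\beta$, which may require revisiting the constants in those arguments.
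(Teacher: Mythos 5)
Your overall architecture (compactness of the family $({}^{\beta}\blambda)$, passage to the limit in \eqref{e-2}, identification of the limit via the uniqueness part of Theorem \ref{t:main}, then a subsequence argument) is sound in shape, but the proof as written has a genuine gap exactly at its load-bearing step: the ``uniform bounds'' of Step 1 and the ``uniform-in-$\beta$ integrable majorant'' of Step 2 are asserted, not proved. The estimates in Sections 2--4 of the paper are derived for a single fixed $\beta$, and nothing in them tracks how the constants (e.g.\ in Lemma \ref{lem13}, Lemma \ref{cor-13}, or the iteration in Theorem \ref{main-1}) depend on $\beta$; moreover, even granting those asymptotics uniformly, they control $h_a(x_a)$, $\lambda''(a)$ and $\log f$ for \emph{large} $a$ and $x$, and do not by themselves pin each individual coefficient $c_i(\beta)=e^{-\lambda_i(\beta)}$ into a compact interval of $(0,\infty)$ uniformly for $\beta$ in a compact subinterval of $[0,1)$. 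Without that, your family need not be precompact (coefficients could drift to $0$ or $\infty$ along $\beta_n$), and without a majorant for ${}^{\beta_n}f(sx)/{}^{\beta_n}f(x)$ you cannot pass to the limit in the integral, so the limit function may fail to satisfy \eqref{e-2} and the uniqueness theorem cannot be invoked. You flag this yourself at the end, but it is precisely the content that needs proving: as it stands the corollary has been reduced to an unproved uniform version of the paper's estimates.

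The paper closes exactly this hole by a different and more economical device: it reruns the comparison machinery of the uniqueness proof (the analogue of Lemma \ref{lem-Q} and the $k_i=c(i)/\bar c(i)$ argument) with $f_1=e^x$ and with $f_1=e_\gamma$, obtaining the explicit bound $c_n(\beta)>\frac{1}{n!}$ for $n\geq 1$ and the monotonicity $c_n(\beta)>c_n(\gamma)$ for $\gamma<\beta$. Monotonicity gives compactness for free (one-sided limits $\inf_{\beta>\gamma}c_n(\beta)$ exist), and $c_n(\beta)>1/n!$ gives $e_\beta(x)\geq e^x$, hence the fixed integrable majorant $x^n e^{-x}$ needed to pass to the limit in the coefficient equations; the limit is then identified with $e_\gamma$ by uniqueness, yielding semicontinuity from both sides and, via Dini, locally uniform continuity of ${}^{\beta}f$. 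If you want to salvage your compactness route, the cleanest fix is to import exactly these two comparison facts (monotonicity in $\beta$ and domination by $e^x$), at which point your Steps 1--3 go through; trying instead to make the Section 2--4 constants uniform in $\beta$ would be substantially more work than the corollary warrants.
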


	As our setting on $(\C,0)$ is expected to play the role of a model, it is then important to understand the asymptotics of $^\beta f(x)$ as $x\to \infty$. We make the following conjecture in this direction. 

\begin{conj}
	Let $^\beta f(x)$ be the unique solution to \eqref{e-2} with $^\beta (0)=1$, then $^\beta f$ should asymptotically be $C_\beta x^\beta e^x$ as $x\to +\infty$.
\end{conj}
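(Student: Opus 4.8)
The plan is to convert the conjecture into a statement about the Taylor coefficients $a_i=e^{-\lambda_i}$ of the solution and then exploit the self-referential (fixed-point) structure of \eqref{e-2}. Expanding $f(sx)=\sum_j a_j s^j x^j$ inside \eqref{e-2} and matching powers of $s$ shows that \eqref{e-2} is equivalent to the family of moment identities
\begin{equation*}
 a_j\, m_j=1 \ \ (j\ge 1),\qquad a_0\, m_0=1-\beta,\qquad m_j:=\int_0^\infty\frac{x^j}{f(x)}\,dx .
\end{equation*}
A standard transfer lemma of Hankel/Borel type --- if $a_j=D_j/\Gamma(j+1-\beta)$ with $D_j\to D_\infty$ then $f(x)\sim D_\infty x^\beta e^x$ as $x\to+\infty$, because $\sum_j x^j/\Gamma(j+1-\beta)\sim x^\beta e^x$ --- reduces the conjecture to proving that $D_j:=a_j\,\Gamma(j+1-\beta)$ converges to a positive constant $C_\beta$.

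First I would establish crude two-sided bounds $c\,x^\beta e^x\le f(x)\le C\,x^\beta e^x$ for $x\ge R$. The tools here are: the fine asymptotic estimates for the balanced embedding already used in proving the uniqueness theorem; the log-convexity of $t\mapsto\log f(e^t)$, which makes $\theta(x):=xf'(x)/f(x)$ increasing from $0$ to $+\infty$; and the normalization $m_0=(1-\beta)e$. Inserting such bounds into $m_j=\int_0^\infty x^j/f(x)\,dx$ and running Laplace's method --- the mass of $x^j/f(x)$ concentrates near $x=j-\beta$ --- gives $m_j\asymp\Gamma(j+1-\beta)$, hence $0<\inf_j D_j\le\sup_j D_j<\infty$.

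Next I would upgrade boundedness of $\{D_j\}$ to convergence. Setting $H(x):=f(x)x^{-\beta}e^{-x}$, one has simultaneously $H(x)=\sum_k D_k\,p_k(x)$ with $p_k$ the density of the $\Gamma(k+1-\beta)$ law, and $1/D_j=\mathbb{E}_{X\sim\Gamma(j+1-\beta)}[1/H(X)]$. Composing these two relations presents $D_j$ as a smoothing of the sequence $\{D_k\}$ --- an average followed by a harmonic mean --- against a kernel concentrated on the window $k=j+O(\sqrt j)$; since consecutive Gamma densities overlap strongly, this smoothing contracts oscillations, so $\operatorname{osc}_{[j,2j]}D\to 0$, whence $D_j\to C_\beta$ and $f(x)\sim C_\beta x^\beta e^x$.

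The main obstacle is the a priori step, and specifically pinning the polynomial exponent to exactly $\beta$. The naive bootstrap $f\lesssim x^\gamma e^x\Rightarrow m_j\gtrsim\Gamma(j+1-\gamma)\Rightarrow a_j\lesssim 1/\Gamma(j+1-\gamma)\Rightarrow f\lesssim x^\gamma e^x$ preserves any exponent $\gamma$, so the value $\gamma=\beta$ must be forced by the defect $-\beta$ in \eqref{e-2}, which lives solely in the constant term of $\frac1{1-s}$; quantifying how this single defect propagates through all the identities $a_j m_j=1$ so as to shift the asymptotic exponent by exactly $\beta$ is the technical heart, and it demands Laplace estimates for $m_j$ uniform in $j$, hence derivative control on the entire function $f$ via Cauchy estimates. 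A helpful sanity check and possible alternative route is the $s\to 1^-$ asymptotics: with $s=e^{-\tau}$ the identity forces, to leading orders, $\int_0^\infty e^{-\tau\theta(x)}\,dx=\tfrac1\tau+\big(\tfrac12-\beta\big)+o(1)$, which --- since $\theta$ is increasing --- is a Tauberian statement equivalent to $\theta(x)=x+\beta+o(1)$, i.e. to the conjecture; making this rigorous runs into the same problem of excluding bounded oscillation in $\theta$, which is precisely where one must invoke \eqref{e-2} for all $s\in[0,1)$ rather than just its expansion at $s=1$.
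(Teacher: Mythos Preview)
The statement you are attempting to prove is labelled in the paper as a \emph{Conjecture}, not a theorem; the paper offers no proof of it. What the paper does establish are the weaker asymptotics of Theorem~\ref{main-1} and its corollaries (e.g.\ $u(x)/x\to 1$, $a\lambda''(a)\to 1$, $h_a(x_a)/\sqrt a\to\sqrt{2\pi}$), which fall short of pinning down the polynomial correction $x^\beta$. So there is nothing in the paper to compare your argument to, and your write-up should be read as a proposed attack on an open problem rather than an alternative proof.

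As a strategy your outline is coherent and you correctly identify the real obstruction: showing that $D_j=a_j\Gamma(j+1-\beta)$ (equivalently $u(x)-x$) actually converges rather than merely staying bounded. Your smoothing/contraction heuristic for $D_j$ is appealing but is not yet an argument: the composite map ``average then harmonic mean'' is not a linear contraction, and without a quantitative contraction rate one cannot rule out persistent $O(1)$ oscillation of $D_j$ on the scale $\sqrt j$ --- exactly the scale on which the Gamma kernels overlap. The paper's own machinery (the dominating-function comparisons of Section~3 and the local-extremum iteration in the proof of Theorem~\ref{main-1}) was built precisely to kill oscillation, but it only succeeds at the level of the \emph{second} derivatives $\lambda''(a)$ and $(\log f)''$; pushing it one order lower to control $u(x)-x$ itself would require a genuinely new idea.

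One small caution on your Tauberian computation: the leading-order identification $\int_0^\infty f(sx)/f(x)\,dx\approx\int_0^\infty e^{-\tau u(x)}\,dx$ drops a second-order term $\tfrac12\tau^2\,\partial_t^2\log f$, and since the paper shows $\partial_t^2\log f\sim x$ this term contributes exactly the $+\tfrac12$ you see on the right-hand side. So the heuristic ``$\int e^{-\tau u}=\tfrac1\tau+(\tfrac12-\beta)$ forces $u=x+\beta$'' is only correct after this correction is accounted for; as written it would instead suggest $u=x+\beta-\tfrac12$. This does not affect your overall plan, but it is worth getting right before building on it.
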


	We will now talk about the proof.
	Let $u(x)=xf'(x)/f(x)$. In \cite{MilesWill}, the two main ingredients are a result of \cite{hayman1978} and the explicit formula for the coefficients of the Taylor expansion of $e^x$. More precisely, \cite{hayman1978} showed that when $\beta=0$, if $f(x)$ satisfies the equation \ref{e-2}, then $u-x=O(\sqrt{x})$ as $x\to \infty$. Although this estimate likely holds for $\beta>0$, we no longer have explicit formulas for the coefficients of $f(x)$. So we have to take a different route, in which we do not rely on the result from \cite{hayman1978}.

	Since the coefficients $e^{-\lambda_i}$ and the function $f$ are mutually determined, our proof involves improving our knowledge of both $e^{-\lambda_i}$ and $f$ simultaneously. To obtain good estimates of the $\lambda_i$'s, we need to control $\frac{du}{dx}=x\frac{d^2}{dt^2}\log f(x)$, where $t=\log x$. Noticing that $\frac{d^2}{dt^2}\log f(x)>0$ and $\lambda''(i)>0$, our main strategy is to make use of the convexity. In particular, in section 3, we will show that there is a specific type of convex function(called dominating function), which is computable and can be used to control the needed integrals of general convex functions. And that is the main observation and the most technical part of this article. Then, using this technique, we can show the asymptotic behavior of $\lambda_i$ as $i$ approaches infinity and the asymptotic behavior of $f(x)$ as $x$ approaches infinity, which then implies the uniqueness of $f(x)$.

	\

	This paper is organized as follows: In Section 2, we build up the rough estimates; In Section 3, we develop the tool of dominating function to control the integrals of general convex functions; Then in Section 4, we use the dominating function to prove more accurate estimates, which can then be used in section 5 to prove the uniqueness of the balanced metric.

	\
	
	\textbf{Acknowledgements.} The author would like to thank Professor Song Sun for many very helpful discussions.

	\section{rough estimates}
	We fix $\beta\in (0,1)$, and let $f(x)$ be an entire function satisfying the equation \ref{e-2}.	
	We make the normalization so that $f(0)=1$, i.e. $\lambda_0=0$.
	
	\subsection{degree 1 estimate}
Recall that we have defined $u=x\frac{f'(x)}{f(x)}$. 

\begin{prop}\label{prop-start}
	\begin{equation}
	\lim_{x\to \infty}\frac{u}{x}=1.
	\end{equation}
\end{prop}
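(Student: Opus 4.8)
The plan is to extract the asymptotics of $u(x) = x f'(x)/f(x)$ directly from the balance equation \eqref{e-2} by analyzing the behavior of the ratio $f(sx)/f(x)$ as $x \to \infty$. First I would observe that since $f(x) = \sum_{i\ge 0} e^{-\lambda_i} x^i$ has all positive coefficients, $f$ is log-convex in $t = \log x$, so $u(x)$ is non-decreasing in $x$; moreover $u(x) \to \infty$ as $x \to \infty$ (since $f$ is entire of order at most... well, at least $u$ is unbounded because $f$ is not a polynomial). The key heuristic is that $\log f(sx) - \log f(x) = \int_{sx}^{x} u(r)\,\frac{dr}{r} = \int_{\log s + \log x}^{\log x} u(e^\tau)\,d\tau$, so for fixed $s \in (0,1)$ and large $x$, this is approximately $-u(x)\log(1/s)$ when $u$ varies slowly, giving $f(sx)/f(x) \approx s^{u(x)}$. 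Substituting into \eqref{e-2} would then force $\int_0^\infty s^{u(x)}\,dx$-type behavior to match $\frac{1}{1-s} - \beta$, and a Laplace/saddle-point comparison should pin down $u(x)/x \to 1$.

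The more careful route: I would split the integral $\int_0^\infty \frac{f(sx)}{f(x)}\,dx$ at a point like $x = A$ (fixed large) and at $x = $ (something growing). On the tail, use monotonicity of $u$ to sandwich $f(sx)/f(x)$ between $s^{u(x_1)}$ and $s^{u(x_2)}$ on dyadic-type blocks. The right-hand side $\frac{1}{1-s} - \beta$ has a simple pole at $s = 1$ with residue $1$; matching the blow-up rate as $s \to 1^-$ of the left side against this residue is what should yield $\lim u/x = 1$. Concretely, I expect to show: if $\liminf u/x = a$ and $\limsup u/x = b$, then testing the equation as $s \to 1$ forces $a = b = 1$ — the pole structure leaves no slack. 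An alternative is to differentiate or take the $s \to 1$ limit of \eqref{e-2} itself, relating $\int_0^\infty \bigl(\frac{f(sx)}{f(x)} - \text{something}\bigr)dx$ to $\beta$, but the direct block estimate is cleaner.

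The main obstacle will be controlling the region where $x$ is moderate (not small, not yet in the genuine asymptotic regime): there $u(x)$ need not be close to $x$, and the contribution of $\int s^{u(x)}\,dx$ over this range to the pole at $s=1$ must be shown to be negligible or absorbed. This requires a preliminary crude bound, e.g. that $u(x) \le x + o(x)$ cannot fail badly — one direction ($\limsup u/x \le 1$, or at least $u(x) = O(x)$) should follow from $f(x) \le e^x$ being false in general but from integrability of $f(sx)/f(x)$ forcing $f$ not to grow too slowly, while the lower bound ($\liminf u/x \ge 1$) follows because if $u(x)/x$ dipped below $1$ along a sequence, $f(sx)/f(x)$ would decay too slowly in $s$ near those scales and the integral would overshoot the pole. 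Handling the interplay of these two crude bounds, and making the "$f(sx)/f(x) \approx s^{u(x)}$" approximation rigorous uniformly enough in $s$ near $1$, is where the real work lies; everything else is bookkeeping with monotone functions.
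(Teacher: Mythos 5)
Your reduction is the same as the paper's: by monotonicity of $u$ (log-convexity of $f$ in $t=\log x$) one has the sandwich $s^{u(x)}\le f(sx)/f(x)\le s^{u(sx)}$, and \eqref{e-2} then gives $\bigl|\int_0^\infty e^{-\tau u(x)}\,dx-\tfrac1\tau\bigr|\le C$ for all $\tau>0$ (the paper writes this as $|\int_0^\infty e^{-tu}\phi(u)\,du-\tfrac1t|\le 1+\beta$, $\phi$ being the density of the distribution function $V(T)=|\{x:u(x)\le T\}|$). Up to this point your plan is fine, and in fact the two worries you flag as "the real work" — uniformity of the approximation $f(sx)/f(x)\approx s^{u(x)}$ near $s=1$ and the moderate-$x$ region — are the easy part, handled at once by the monotonicity sandwich.

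The genuine gap is the final step. Passing from the transform estimate $\int_0^\infty e^{-\tau u(x)}dx=\tfrac1\tau+O(1)$ to $u(x)/x\to1$ is a Tauberian theorem (Hardy--Littlewood/Karamata applied to the monotone function $V$), and "matching the blow-up rate at the pole $s=1$" or single-scale block estimates do not deliver it. For instance, the obvious exploitation of one value of $\tau$, namely $V(T)e^{-\tau T}\le \tfrac1\tau+C$ optimized at $\tau=1/T$, only gives $V(T)\le eT(1+o(1))$, i.e. $u(x)\gtrsim x/e$; a Fatou-type limit argument gives $\liminf V(T)/T\le 1\le\limsup V(T)/T$, which is the wrong direction. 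One really must combine the transform bounds at many scales simultaneously, and that is precisely what the paper's proof consists of: it applies the bound at $t,2t,\dots,(N+1)t$, uses Stone--Weierstrass to approximate a bounded-variation cutoff $g$ by polynomials in $e^{-u}$, and deduces $(1-\epsilon)T-C_\epsilon\le\int_0^T\phi\le(1+\epsilon)T+C_\epsilon$, i.e. $V(T)\sim T$, hence $u(x)/x\to1$. Your proposal replaces this step with the assertion that "the pole structure leaves no slack," which is exactly what needs proof. The plan can be repaired either by quoting Karamata's Tauberian theorem for the monotone $V$, or by reproducing the polynomial-approximation argument as the paper does; as written, the decisive step is missing.
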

The proof basically repeats the arguments in section 3.0.1 in \cite{sun-sun-2}, but for the convenience of the readers, we include it here.
\begin{proof}
	We use the arguments in section 3 in \cite{sun-sun-2}. First, we have
	$$\int_0^{\infty}\frac{f((1-t)x)}{f(x)}dx=\frac{1}{t}-\beta, $$
	and 
	$$\int_0^{\infty}\frac{f(x/(1+t))}{f(x)}dx=\frac{1+t}{t}-\beta. $$
	Then we get  for all $t>0$ that 
	$$|\int_0^{\infty} e^{-tu} \phi(u) du-\frac{1}{t}|\leq 1+\beta. $$
	So for any $n\in \N$, we have
	$$|\int_0^{\infty} e^{-tu}e^{-ntu} \phi(u) du-\frac{1}{(n+1)t}|\leq 1+\beta.$$
	Let $g$ be any measurable function on $[0,1]$ with bounded variation. Fix any $\epsilon>0$ small, by the Stone-Weierstrass approximation theorem there are polynomials $p$ and $P$ such that
	$$p(x)< g(x)< P(x), $$ 
	and
	$$\int_0^{\infty} e^{-u}(P(e^{-u})-p(e^{-u}))dt< \epsilon.$$
	For any $n\in \N$, we have
	$$|\int_0^{\infty} e^{-tu}e^{-ntu} \phi(u) du-\frac{1}{(n+1)t}|\leq C_0\frac{1+(n+1)^{1/2}t^{1/2}}{(n+1)^{1/2}t^{1/2}}.$$
	Let $N$ be the maximum of $\deg p$ and $\deg P$. So
	$$|\int_0^{\infty} e^{-tu} P(e^{-tu})\phi(u)du-\frac{1}{t} \int_0^{\infty} e^{-u} P(e^{-u}) du|\leq (1+\beta)(1+N). $$
	So 
	$$\int_0^{\infty} e^{-tu} g(e^{-tu})\phi(u) du\leq \frac{1}{t}(\int_0^{\infty} e^{-u} g(e^{-u})du+\epsilon)+(1+\beta)(1+N). $$
	Similarly, we have
	$$\int_0^{\infty} e^{-tu} g(e^{-tu})\phi(t) dt\geq \frac{1}{t}(\int_0^{\infty} e^{-u} g(e^{-u})du-\epsilon)-(1+\beta)(1+N). $$
	Now define $g(x)$ to be zero for $x< e^{-1}$ and $1/x$ for $x\in [e^{-1}, 1]$. Then 
	
	$$\int_0^{\infty} e^{-u} g(e^{-u}) du =1, $$
	and
	
	$$\int_0^{\infty} e^{-tu} g(e^{-tu})\phi(u)du=\int_0^{1/t} \phi(u) du. $$
	Let $T=1/t$ and fix $\epsilon>0$ sufficiently small we obtain 
	$$\int_0^T \phi(u)du\leq (1+\epsilon)T+(1+\beta)(1+N). $$
	Similarly, we have
	\begin{equation}
	\int_0^T \phi(u)du\geq (1-\epsilon)T-(1+\beta)(1+N). 
	\end{equation}
	So we get 
	$$1-\epsilon\leq \liminf_{x\to \infty}\frac{u}{x}\leq\limsup_{x\to \infty}\frac{u}{x}\leq 1+\epsilon.$$
	Since $\epsilon$ is arbitrary, the conclusion follows.

\end{proof}

\subsection{rough degree 2 estimates} 
	
	Denote by $t=\log x$.  Then we have  $$\int_0^\infty \frac{x^ndx}{f(x)}=\int_{-\infty}^\infty e^{(n+1)t-\log f(x)}dt,$$
	and
	$$f(x)=\sum_{i=0}^\infty e^{-\lambda_i+i\log x}.$$
	In both the integral and the summation, the exponents have concavity of which we will make heavy use.
	For any $a\in \mathbb R$, we denote $$g_a(t)=at-\log f(x).$$ It is easy to check that $g_a$ is a strictly concave function of $t$.  Since  $g_a'(t)=a-u(x)$, and by proposition \ref{prop-start} $\lim_{x\to \infty}\frac{f'(x)}{f(x)}=1$, we see that $g_a(t)$ has a unique maximum at $t_{a}=\log x_a$ with $u(x_{a})=a$.  Moreover, we have $\frac{dx_a}{da}>0$ and $\lim_{a\rightarrow\infty}a^{-1}x_a=1$. In particular, for $x$ large there is a unique $a$ such that $x=x_a$.

	 We also define the function 
	\begin{equation}c(a)=(\int_0^\infty \frac{x^a}{f(x)}dx)^{-1} \label{e:definition of ca}	
	\end{equation}
 and $\lambda(a)=-\log c(a)$ for $a\in [0, \infty)$. Notice that when $a>0$ is an integer we have $c(a)=c_a$ and $\lambda(a)=\lambda_a$. One also notices that $c(0)\neq c_0$, which can be ignored, since it will not affect our arguments.

 One can compute
	\begin{equation}\label{lambda derivative}\lambda'(a)=c(a)\int_0^\infty \frac{(\log x)x^adx}{f(x)},	
	\end{equation}
 and
	\begin{eqnarray}\label{lambda second derivative}
		\frac{d^2}{da^2}\lambda(a)=c^2(a)[ \int_0^\infty \frac{x^adx}{f(x)}\int_0^\infty \frac{(\log x)^2x^adx}{f(x)}-( \int_0^\infty \frac{(\log x)x^adx}{f(x)})^2] >0.
	\end{eqnarray}
	For $x>1$ we denote $$F_x(n)=n\log x-\lambda(n),$$ then  $F_x(n)$ is a strictly concave function of $n$.   
	It follows that $F_x(n)$ has at most one maximum. Estimate (3.9) in \cite{sun-sun-2} implies that $\lambda'(n)\to +\infty$ as $n\to+\infty$. Therefore when $x$ is large $F_x(n)$ attains its maximum at $n_x>1$. Moreover, as $x\rightarrow\infty$, we have $n_x\rightarrow\infty$.

	Now for all $a\geq1$ we denote $$h_a(x)=e^{-g_a(x)}=\frac{f(x)}{c(a)x^a}.$$

	\begin{lem}\label{lem13}
		For $a\gg1$, we have $h_a(x_a)\geq \frac{\sqrt{a}}{13}$.
	\end{lem}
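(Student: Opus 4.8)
The idea is to turn the inequality into a lower estimate for a Laplace-type integral centred at $x_a$. Using the definitions of $c(a)$ and of $h_a$,
\[
h_a(x_a)=\frac{f(x_a)}{c(a)\,x_a^a}=\frac{f(x_a)}{x_a^a}\int_0^\infty\frac{x^a}{f(x)}\,dx=\int_0^\infty e^{\psi_a(\log x)}\,dx ,
\]
where $\psi_a(t):=g_a(t)-g_a(t_a)$ is strictly concave, $\psi_a\le 0$, $\psi_a(t_a)=\psi_a'(t_a)=0$, and $\psi_a'(t)=a-u(e^t)$ (here I used $u(x_a)=a$). Substituting $t=\log x$ and discarding the portion $t<t_a$ of the integral,
\[
h_a(x_a)\ \ge\ \int_{t_a}^\infty e^{\psi_a(t)+t}\,dt\ \ge\ e^{t_a}\int_{t_a}^\infty e^{\psi_a(t)}\,dt\ =\ x_a\int_{t_a}^\infty e^{\psi_a(t)}\,dt .
\]
Since $x_a/a\to 1$, it is enough to bound the right tail $\int_{t_a}^\infty e^{\psi_a}\,dt$ from below by something of order $a^{-1/2}$; i.e. the peak of $e^{\psi_a}$ should have ``width'' $\gtrsim a^{-1/2}$ on the side $t>t_a$.

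For that width bound I would use the exact identity supplied by $\psi_a'=a-u(e^t)$: since $(u(e^t)-a)e^{\psi_a(t)}=-\tfrac{d}{dt}e^{\psi_a(t)}$, for every $\delta>0$
\[
1-e^{\psi_a(t_a+\delta)}=\int_{t_a}^{t_a+\delta}\bigl(u(e^t)-a\bigr)e^{\psi_a(t)}\,dt\ \le\ \bigl(u(x_ae^\delta)-a\bigr)\int_{t_a}^{t_a+\delta}e^{\psi_a(t)}\,dt ,
\]
where I used that $u$ is increasing (equivalently $\frac{d^2}{dt^2}\log f(x)>0$). Write $m(\delta):=u(x_ae^\delta)-a$. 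If $e^{\psi_a(t_a+\delta)}\le\tfrac12$, the identity gives $\int_{t_a}^{t_a+\delta}e^{\psi_a}\ge\frac1{2m(\delta)}$; if $e^{\psi_a(t_a+\delta)}>\tfrac12$ then, $\psi_a$ being decreasing on $[t_a,t_a+\delta]$, $\int_{t_a}^{t_a+\delta}e^{\psi_a}\ge\delta/2$. Hence for all $\delta>0$
\[
\int_{t_a}^\infty e^{\psi_a(t)}\,dt\ \ge\ \tfrac12\min\!\Bigl(\delta,\ \tfrac1{m(\delta)}\Bigr),
\]
and since $\delta\mapsto\delta\,m(\delta)$ is increasing, optimizing gives $h_a(x_a)\ge\tfrac{x_a}{2}\,\delta_a^{*}$, where $\delta_a^{*}$ is the unique root of $\delta\,m(\delta)=1$; equivalently $\delta_a^{*}\ge\delta_0$ as soon as $\delta_0\,m(\delta_0)\le1$.

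It then remains to exhibit $\delta_0\asymp a^{-1/2}$ with $m(\delta_0)\le1/\delta_0$, i.e. to show $u(x_ae^{\delta_0})-a\lesssim\sqrt a$ for such a $\delta_0$; combined with $x_a/a\to1$ and a careful choice of constants this yields $h_a(x_a)\ge\sqrt a/13$ for $a\gg1$, the constant $13$ being deliberately wasteful (the true order is $h_a(x_a)\sim\sqrt{2\pi a}$). \textbf{This last step is the main obstacle}: one must extract a genuine $O(\sqrt a)$ rate for $u(x_ae^{\delta_0})-a$ out of the soft statements $u(x)/x\to1$ and $x_a/a\to1$ of Proposition \ref{prop-start} — using $u(x_a)=a$ exactly, the monotonicity of $u$, and, if needed, the rough estimates of \cite{sun-sun-2} — and it is precisely here that the numerical constant gets pinned down. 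Everything else is the elementary bookkeeping displayed above.
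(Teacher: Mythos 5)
Your bookkeeping up to the reduction is fine: the identity $h_a(x_a)=\int_0^\infty e^{\psi_a(\log x)}\,dx$ is just the normalization $\int_0^\infty h_a(x)^{-1}dx=1$ rewritten, and the bound $\int_{t_a}^\infty e^{\psi_a}\,dt\ge\frac12\min\bigl(\delta,\,1/m(\delta)\bigr)$ with $m(\delta)=u(x_ae^{\delta})-a$ is correct. But the step you yourself flag as "the main obstacle" is a genuine gap, not a matter of pinning down constants, and it cannot be closed with the tools available at this point of the paper. You need $u(x_ae^{\delta_0})-u(x_a)=O(\sqrt a)$ for some $\delta_0\asymp a^{-1/2}$, i.e.\ an upper bound for the increment of $u$ over an $x$-interval of length $\asymp\sqrt a$ to the right of $x_a$; equivalently, an upper bound of order $x$ for $\frac{d^2}{dt^2}\log f$ averaged at that scale. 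Proposition \ref{prop-start} only gives $u(x)=x(1+o(1))$ and $x_a=a(1+o(1))$, so the error terms are $o(a)$, which is far too weak to control an increment at scale $\sqrt a$ (the error could be, say, $a/\log a$). The needed bound is exactly the Hayman-type estimate $u-x=O(\sqrt x)$ that the introduction says is \emph{not} available for $\beta>0$ and is deliberately not assumed; in this paper the corresponding upper bound (Proposition \ref{prop-big-upp}) is proved \emph{after} and \emph{by means of} the present lemma (through Lemma \ref{cor-13}). So as it stands your argument is circular relative to the paper's logical order: the missing ingredient is essentially equivalent in strength to the conclusion you are trying to prove.

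The paper takes a different, self-contained route that you may want to compare with: it argues by contradiction, assuming $h_a(x_a)<\sqrt a/13$, and works with the \emph{discrete} representation $h_a(x)=\sum_i\delta_i(x)$, $\delta_i(x)=c_a^{-1}c_ix^{i-a}$. Concavity of $F_x(n)=n\log x-\lambda(n)$ in the index $i$ (rather than any a priori bound on $u$) shows that the smallness hypothesis forces the coefficient profile to decay, so that $h_a(x_a+y)<\sqrt a$ for all $0<y<\sqrt a$; integrating $1/h_a$ over that interval then gives $\int_0^\infty h_a^{-1}dx>1$, contradicting the same normalization you start from. If you want to salvage your peak-width scheme, you would have to replace the appeal to an $O(\sqrt a)$ bound on $u$ by an argument of this contradiction/bootstrap type; as written, the proposal does not prove the lemma.
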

	\begin{proof}
		For simplicity, we assume $a$ is an integer. The case when $a$ is not an integer is similar, just with more complicated notations.
		We write  $$h_a(x)=\sum_{i=0}^\infty \delta_i(x),$$ where $\delta_i(x)=c_a^{-1}c_ix^{i-a}$. Then for $a\geq 1$, we have $\int_0^\infty h_a(x)^{-1}dx=1$.
		
		Assume that $h_a(x_a)<\frac{\sqrt{a}}{13}$ for some $a\gg1$. Let $\lambda_x(i)=F_x'(i)=\log x +\frac{c'(i)}{c(i)}$. By definition, $\lambda_x(n_x)=0$ and $\delta_a(x_a)=1$ . Let $\gamma=\delta_{a+\asr}(x_a)$.  Then we have, by concavity,
		$$\sum_{i=a}^{a+\asr}\delta_i(x_a)\geq \asr \min (\gamma,1).$$
		It follows that $\gamma<1$  and \begin{equation}\label{bound on n}
 	n_{x_a}<a+\sqrt{a}.
 \end{equation}
 Then we have $\gamma<\frac{\sqrt{a}}{13\asr}$. Furthermore we must have  $\lambda_{x_a}(a+\asr)<0$.

		By the mean value theorem, $-\lambda_{x_a}(a+\asr)\asr>\log \frac{1}{\gamma}$. Therefore $$\lambda_{x_a}(a+\asr)<\frac{-1}{\asr}\log(\frac{13\asr}{\sqrt{a}}).$$
		Then \begin{eqnarray*}
			\lambda_{x_{a}+\sqrt{a}}(a+\asr)&=&\lambda_{x_a}(a+\asr)+\log(1+\frac{\sqrt{a}}{x_a})\\&<&\frac{\sqrt{a}}{x_a}-\frac{1}{\asr}\log(\frac{13\asr}{\sqrt{a}})\\&<&0,
		\end{eqnarray*}
		if $a$ is sufficiently large. Therefore, $n_{x_a+\sqrt{a}}<a+\asr.$
		So for $0<y<\sqrt{a}$, we  have
		\begin{equation}
		\sum_{i>a+\sqrt{a}}\delta_i(x_a+y)<\frac{\gamma(1+\frac{y}{x_a})^{\asr}}{-(\frac{\sqrt{a}}{x_a}-\frac{1}{\asr}\log(\frac{13\asr}{\sqrt{a}}))}<\frac{e^2\sqrt{a}}{26},
		\end{equation}
		where the second inequality is because the denominator is larger than $\frac{2}{\sqrt{a}}$ and $(1+\frac{y}{x_a})^{\asr}<e^2$ for $a$ large.
		
		Since $\frac{\delta_i(x_a+y)}{\delta_i(x_a)}=(1+\frac{y}{x_a})^{i-a}$, we also have
		\begin{equation}
		\sum_{a\leq i\leq a+\sqrt{a}}\delta_i(x_a+y)<(1+\frac{y}{x_a})^{\asr}\sum_{a\leq i\leq a+\sqrt{a}}\delta_i(x_a)<\frac{e^2\sqrt{a}}{13}.
		\end{equation}
		So we have $h_a(x_a+y)<(1+e^2+\frac{e^2}{2})\frac{\sqrt{a}}{13}<\sqrt{a}$. Therefore
		$$\int_0^{\infty}\frac{1}{h_a(x)}dx>1,$$
		which is a contradiction.
	\end{proof}

	We can write $$h_a(x)=\frac{1}{c_a}\sum_{i=0}^\infty e^{(i-a)t+\log c(i)}.$$
	Then
	$$\frac{d}{dt}\log h_a(x)=\frac{\sum_{i=0}^\infty (i-a)e^{(i-a)t+\log c(i)}}{\sum_{i=0}^\infty e^{(i-a)t+\log c(i)}},$$and 
	\begin{eqnarray*}
		\frac{d^2}{dt^2}\log h_a(x)&=&\frac{h_a\frac{d^2}{dt^2}h_a-(\frac{d}{dt}h_a)^2}{h^2_a}\\
		&=&\sum_{i=0}^\infty (i-a-\sigma_a(x))^2\tau_x(i),
	\end{eqnarray*}
	where $\tau_x(i)=h_i(x)^{-1}$, and $\sigma_a(x)=\frac{\frac{d}{dt}h_a(x)}{h_a(x)}$. 
	
	We denote $\tilde t_a=\log \tilde x_a=\frac{d}{da}\lambda(a)$. Then $n_{\tilde x_a}=a$ 
	and $h_a(\tilde x_a)\geq h_a(x_a)\geq \frac{\sqrt{a}}{13}.$  The convexity of $\lambda(n)$ implies that $\tilde x_a$ is a strictly increasing function of $a$. In particular, for $x$ large, there is a unique $a$ such that $x=\tilde x_a$.
	\begin{lem}
		Suppose there exists  $A>0$ such that $h_a(x_a)>\frac{\sqrt{a}}{A}$ for all $a$ large enough. Then for any $\delta>0$, we have
		$$\frac{d^2}{dt^2}\log f(x)>\frac{x}{27A^2+\delta},$$
		for $x$ large enough.
	\end{lem}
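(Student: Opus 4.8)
The proof rests on reading the left‑hand side as a variance. Fix any $a\ge 1$; since $\log h_a(x)-\log f(x)=-\log c(a)-at$ is affine in $t=\log x$, the formula for $\frac{d^2}{dt^2}\log h_a(x)$ displayed above gives
\[
\frac{d^2}{dt^2}\log f(x)=\sum_{i=0}^\infty\bigl(i-a-\sigma_a(x)\bigr)^2\tau_x(i),\qquad \tau_x(i)=h_i(x)^{-1}=\frac{c(i)x^i}{f(x)}.
\]
Up to the harmless discrepancy between $c(0)$ and $c_0$, $\tau_x$ is a probability measure on $\Z_{\ge0}$ with mean $\sum_i i\,\tau_x(i)=xf'(x)/f(x)=u(x)$; and $\sigma_a(x)=\frac{d}{dt}\log h_a(x)=u(x)-a$, so $i-a-\sigma_a(x)=i-u(x)$ and the identity reads $\frac{d^2}{dt^2}\log f(x)=\operatorname{Var}(\tau_x)$. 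It therefore suffices to bound this variance below by $x/(27A^2+\delta)$ for $x$ large.

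\textbf{Bounding the peak of $\tau_x$.} Since $\tau_x(i)=e^{F_x(i)}/f(x)$ with $F_x$ strictly concave in the real variable $n$ and maximal at $n_x$, we get $\max_i\tau_x(i)\le e^{F_x(n_x)}/f(x)=h_{n_x}(x)^{-1}$. Because $x_a$ is the minimum point of $h_a$, the hypothesis yields $h_a(\tilde x_a)\ge h_a(x_a)>\sqrt a/A$ for all large $a$. Taking $a=n_x$ (which is large once $x$ is large) and using that $\lambda'(n_x)=\log x$, i.e.\ $\tilde x_{n_x}=x$, this gives $h_{n_x}(x)>\sqrt{n_x}/A$, hence
\[
P_x:=\max_i\tau_x(i)\le h_{n_x}(x)^{-1}<\frac{A}{\sqrt{n_x}}\qquad\text{for }x\text{ large.}
\]

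\textbf{Variance versus peak.} For any probability measure $\tau$ on $\Z$ with maximal weight $P=\max_i\tau(i)$ and mean $m$, the integers in $[m-d,m+d]$ number at most $2d+1$, so they carry mass at most $(2d+1)P$; hence $\operatorname{Var}(\tau)\ge d^2\bigl(1-(2d+1)P\bigr)$ for every $d>0$. The right‑hand side is maximised at $d=(1-P)/(3P)$, where it equals $(1-P)^3/(27P^2)$. Applying this with $\tau=\tau_x$, $P=P_x<A/\sqrt{n_x}$,
\[
\frac{d^2}{dt^2}\log f(x)=\operatorname{Var}(\tau_x)\ge\frac{(1-P_x)^3}{27P_x^2}>\frac{n_x}{27A^2}\bigl(1-o(1)\bigr)\qquad(x\to\infty).
\]
Finally $n_x/x\to 1$ as $x\to\infty$: by Proposition \ref{prop-start} one has $u(x)/x\to 1$, and the rough estimates on $\lambda$ of this section confine the bulk of $\tau_x$ to a window of width $o(n_x)$ about its mode $n_x$ (the tails being exponentially small), so $u(x)=n_x(1+o(1))$. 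Since $\delta>0$ is arbitrary, this gives $\frac{d^2}{dt^2}\log f(x)>\frac{x}{27A^2}(1-o(1))>\frac{x}{27A^2+\delta}$ for $x$ large.

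\textbf{The main obstacle.} The only genuinely non‑formal ingredient is the asymptotic $n_x\sim x$ — equivalently, that the mode of $\tau_x$ is comparable to its mean $u(x)$ — which is where one leans on the second‑order control of $\lambda$ (and on $h_a(x_a)$ being already of order $\sqrt a$). The variance identity is elementary, the peak bound is a one‑line consequence of the concavity of $F_x$ together with the hypothesis, and the variance‑versus‑peak inequality, whose optimisation is exactly what produces the constant $27$, is self‑contained.
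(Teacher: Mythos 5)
Your first three steps are correct and in fact reproduce the paper's own opening move: writing $\frac{d^2}{dt^2}\log f(x)$ as the variance of the log-concave probability measure $\tau_x$, bounding its peak by $1/h_{n_x}(x)\le A/\sqrt{n_x}$ via concavity of $F_x$ and the hypothesis, and then the Chebyshev-type optimization that produces the constant $27$ (the paper does this with the fixed window of half-width $h_a(x)/3$ about the mean; your optimization over $d$ is the same estimate). Up to this point you have, exactly as the paper does, the lower bound $\frac{n_x}{27A^2}(1-o(1))$ \emph{in terms of the mode $n_x$}, i.e. in terms of the parameter $a$ with $x=\tilde x_a$.

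The gap is the last step, $n_x/x\to 1$, which you dispose of in one sentence by asserting that ``the rough estimates on $\lambda$ of this section confine the bulk of $\tau_x$ to a window of width $o(n_x)$ about its mode,'' so that mean $\approx$ mode and Proposition \ref{prop-start} finishes. Nothing available at this stage supports that concentration claim, and it is not an abstract consequence of what you have established: a log-concave measure on $\mathbb{Z}$ with peak $O(1/\sqrt{n_x})$ and mean $\sim x$ can perfectly well have its mode far to the left of its mean (take weights nearly constant on a long interval to the right of the mode and then decaying); so log-concavity plus the peak bound plus $u(x)\sim x$ do not yield $n_x\sim u(x)$. The concentration you invoke would follow from a lower bound $\lambda''(a)\gtrsim 1/a$, but that bound is Lemma \ref{cor-13}, proved \emph{after} (and by means of) the present lemma and its companion on $\frac{d^2}{da^2}\lambda(a)$, whose proof in turn quotes the comparison $\tilde x_a\le a(1+\epsilon)$ established here — so appealing to it would be circular. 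In the paper, precisely this point is the bulk of the proof: the easy case is $\tilde x_a<x_a$ (mode $\ge$ mean, where $x_a\sim a$ from Proposition \ref{prop-start} suffices), while the case $\tilde x_a>x_a$ is handled by a separate argument combining the vanishing first moment $\sum_i(i-a)c_ix_a^{i-a}=0$ at $x_a$, geometric tail bounds coming from concavity of $F_{x_a}$, and an application of the bound $h_\xi(x_\xi)\ge\sqrt{\xi}/13$ of Lemma \ref{lem13} at an intermediate index $\xi$ with $\lambda'(\xi)=\tfrac12(\log x_a+\log\tilde x_a)$; this is what forces $x_a/\tilde x_a\to 1$, hence $\tilde x_a\le \frac{27A^2+\delta}{27A^2}\,a$ and the stated bound in terms of $x$. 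You need to supply an argument of this kind (or an independent proof that the mode cannot lag behind the mean) before the final inequality can be asserted.
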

	\begin{proof}
		Fix $\delta>0$. For $x$ large we may write  $x=\tilde x_a$ for a unique large $a$. By concavity of $F_x$ 
		we have $\tau_x(i)\leq \frac{1}{h_a(x)}$ for all $i$. So if we take the interval $I=(\sigma_a(x)+a-\frac{h_a(x)}{3},\sigma_a(x)+a-\frac{h_a(x)}{3})$,
		then $\sum_{i\in I}\tau_x(i)<\frac{2}{3}$. So $\sum_{i\notin I}\tau_x(i)>\frac{1}{3}$. Therefore 
		$$\sum_{i=0}^\infty (i-a-\sigma_a(x))^2\tau_x(i)>\frac{1}{3}(\frac{h_a(x)}{3})^2>\frac{a}{27A^2}$$
		If $\tilde x_a<x_a$, then since $\lim_{a\to\infty}\frac{a}{x_a}=1$, we get that for $a$ large that
		$$\frac{d^2}{dt^2}\log h_a(\tilde x_a)>\frac{\tilde x_a}{27A^2+\delta}.$$
		If $\tilde x_a>x_a$, then $n_{x_a}<a$. So 
		$\lambda_{x_a}(a)=\log \frac{x_a}{\tilde x_a}<0$. Hence
		 $$c_a^{-1}\sum_{i\geq a+1}(i-a)c_ix_a^{i-a}<\sum_{j\geq 1}je^{j\log\frac{x_a}{\tilde x_a}}=\frac{x_a}{\tilde x_a(1-x_a/\tilde x_a)^2}.$$
		Since $\sum_i (i-a)c_ix_a^{i-a}=0$, we have 
		$$c_a^{-1}\sum_{i< a}(a-i)c_ix_a^{i-a}<\frac{x_a}{\tilde x_a(1-x_a/\tilde x_a)^2}+1$$
		On the other hand, $$c_a^{-1}\sum_{n_{x_a}\leq i< a}(a-i)c_ix_a^{i-a}\geq \sum_{j=1}^{[a]-\ulcorner n_{x_a}\urcorner}j\geq \frac{1}{2}([a]-\ulcorner n_{x_a}\urcorner)^2.$$
		We denote  $r_a=[a]-\ulcorner n_{x_a}\urcorner$  and $\beta_a=\frac{x_a}{\tilde x_a}$. So $$ \frac12 r_a^2<\frac{\beta_a}{(1-\beta_a)^2}+1.$$
		Let $\xi\in (n_{x_a},a)$ such that $\frac{c'(\xi)}{c(\xi)}-\frac{c'(a)}{c(a)}=\frac12 \log \frac{\tilde x_a}{x_a}$.
		Then we can estimate $$h_\xi(\tilde x_\xi)<r_a+4+\frac{2\sqrt{\beta_a}}{1-\sqrt{\beta_a}}.$$ We have $h_\xi(\tilde x_\xi)\geq\frac{\sqrt{\xi}}{13}$. When $a\to \infty$, we have $x_a\to \infty$, so $n_{x_a}\to \infty$. In particular, when $a$ is large, $\xi$ is also large. We have $$\beta_a\geq \frac{27A^2x_a}{(27A^2+\delta)a}.$$
		So we always have $$\frac{d^2}{dt^2}\log f(x)=\frac{d^2}{dt^2}\log h_a(\tilde x_a)>\frac{\tilde x_a}{27A^2+\delta},$$
		
	\end{proof}
	Similarly, we get
	\begin{lem}
		Suppose there exists $A>0$ such that $h_a(x_a)>\frac{\sqrt{a}}{A}$ for all $a$ large enough. 
		Then for any $\delta>0$, we have
		$$\frac{d^2}{da^2}\lambda(a)>\frac{1}{(27A^2+\delta) a},$$
		for $a$ large enough.
		
	\end{lem}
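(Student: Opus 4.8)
The plan is to reinterpret $\lambda''(a)$ as a variance — exactly as $\frac{d^2}{dt^2}\log f$ was written as $\mathrm{Var}_{\tau_x}(i)$ in the previous two lemmas — and then to bound that variance from below by observing that the underlying probability density cannot be too concentrated. By \eqref{e:definition of ca}, $c(a)^{-1}=\int_0^\infty x^a/f(x)\,dx$, so $d\rho_a:=\tfrac{dx}{h_a(x)}=c(a)\tfrac{x^a}{f(x)}\,dx$ is a probability measure on $(0,\infty)$ (the normalization $\int_0^\infty h_a(x)^{-1}dx=1$ was already used in the proof of Lemma~\ref{lem13}), and \eqref{lambda second derivative} says precisely that $\lambda''(a)=\mathrm{Var}_{\rho_a}(\log x)$. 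In the variable $t=\log x$ this becomes $\lambda''(a)=\mathrm{Var}_{p_a}(t)$, where $p_a(t)=e^{t}/h_a(e^{t})$ is the pushforward density on $\R$.

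Next I would establish a uniform upper bound for $p_a$. Since $\log p_a(t)=(a+1)t-\log f(e^{t})+\mathrm{const}$, we have $\frac{d^2}{dt^2}\log p_a(t)=-\frac{d^2}{dt^2}\log f(e^{t})<0$, so $p_a$ is strictly log-concave, hence unimodal; its mode is the point $x^\ast$ with $u(x^\ast)=a+1$, that is, $x^\ast=x_{a+1}$. Because $x_a$ is the unique minimum of $h_a$ we have $h_a(x_{a+1})\ge h_a(x_a)>\sqrt a/A$, and therefore
$$\sup_{t\in\R}p_a(t)=p_a(\log x_{a+1})=\frac{x_{a+1}}{h_a(x_{a+1})}<\frac{A\,x_{a+1}}{\sqrt a}.$$
Since $\lim_{a\to\infty}a^{-1}x_a=1$ (a consequence of Proposition~\ref{prop-start}), for any fixed $\epsilon>0$ we get $x_{a+1}\le(1+\epsilon)a$ once $a$ is large, hence $p_a(t)\le M:=A(1+\epsilon)\sqrt a$ for all $t$, for $a$ large.

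Finally I would invoke the elementary fact that a probability density $p$ on $\R$ with $\sup p\le M$ satisfies $\mathrm{Var}(p)\ge 1/(27M^{2})$: if $m$ denotes its mean then $\int_{|t-m|\le\ell}p\le 2\ell M$ for every $\ell>0$, so $\mathrm{Var}(p)\ge\ell^{2}(1-2\ell M)$, and the choice $\ell=1/(3M)$ gives the bound. Applying this with $M=A(1+\epsilon)\sqrt a$ yields $\lambda''(a)\ge 1/\big(27A^{2}(1+\epsilon)^{2}a\big)$; choosing $\epsilon$ small enough that $27A^{2}(1+\epsilon)^{2}<27A^{2}+\delta$ gives the strict inequality $\lambda''(a)>1/\big((27A^{2}+\delta)a\big)$ for $a$ large, as required.

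I do not expect a substantial obstacle here; this argument is strictly simpler than the preceding lemma. The one point needing care is the location of the mode of $p_a$: since it sits at $x_{a+1}$, Proposition~\ref{prop-start} puts it within a factor $1+\epsilon$ of $a$, which is exactly what produces the factor $a$ (rather than some $\tilde x_a$) in the denominator and lets us avoid the case distinction $\tilde x_a<x_a$ versus $\tilde x_a>x_a$ that was needed in the previous lemma. The remainder is just bookkeeping of the small parameters so that the slack is absorbed into the prescribed $\delta$.
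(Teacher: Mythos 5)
Your argument is correct and is essentially the paper's own proof: both read $\lambda''(a)$ as the variance of $\log x$ for the probability measure $dx/h_a(x)$, use the hypothesis $h_a(x_a)>\sqrt{a}/A$ together with $x_a\sim a$ to bound the density, and then run the same ``at most $2/3$ of the mass in a short interval, hence at least $1/3$ at distance $\geq \tfrac{1}{3}\cdot(\text{density bound})^{-1}$'' argument that produces the constant $27$. The only cosmetic difference is that you change variables to $t=\log x$ and bound $\sup_t p_a(t)=x_{a+1}/h_a(x_{a+1})$ at the mode, whereas the paper stays in the $x$ variable, uses the interval $(\tilde x_a-\tfrac{\sqrt{a}}{3A},\tilde x_a+\tfrac{\sqrt{a}}{3A})$ and the estimate $\log\bigl(1+\tfrac{\sqrt{a}}{3A\tilde x_a}\bigr)$; your formulation sidesteps that logarithm step but is the same idea.
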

	\begin{proof}
		From \eqref{lambda derivative} we have  $$\tilde t_a=\int_0^\infty\frac{\log x dx}{h_a(x)}.$$ So  by \eqref{lambda second derivative} we get
		$$\frac{d^2}{da^2}\lambda(a)=\int_0^\infty\frac{(\log x-\tilde t_a)^2dx}{h_a(x)}.$$
		 We take $I=(\tilde x_a-\frac{\sqrt{a}}{3A},\tilde x_a+\frac{\sqrt{a}}{3A})$, then since $\frac{1}{h_a(x)}\leq\frac{1}{h_a(x_a)}<\frac{A}{\sqrt{a}}$, we have  $\int_{x\in I}\frac{dx}{h_a(x)}< \frac{2}{3}$. So $\int_{x\notin I}\frac{dx}{h_a(x)}> \frac{1}{3}$. 
		Now $$\frac{d^2}{da^2}\lambda(a)>\int_{x\notin I}\frac{(\log x-\tilde t_a)^2dx}{h_a(x)}>\frac{1}{3}\log^2(1+\frac{\sqrt{a}}{3A \tilde x_a}).
		$$
		As we have shown in the last lemma for any $\epsilon>0$, we have $\frac{\tilde x_a}{a}>1-\epsilon$ for $a$ large enough. Therefore, we have
		$$\frac{d^2}{da^2}\lambda(a)>\frac{1}{(27A^2+\delta)a},$$
		for $a$ large enough.
	\end{proof}
	Now let $A=13$, the above results imply the following.
	\begin{lem}\label{cor-13}
	For $x\gg1$ we have $$\frac{d^2}{dt^2}\log f(x)>\frac{x}{5000},$$ and for $a\gg1$ we have
	$$\frac{d^2}{da^2}\lambda(a)>\frac{1}{5000a}.$$ 	
	\end{lem}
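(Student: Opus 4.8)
The plan is to obtain Lemma \ref{cor-13} as an immediate specialization of the two conditional estimates just established, feeding in the explicit constant $A=13$ produced by Lemma \ref{lem13} and then checking a single numerical inequality.

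First I would reconcile the hypotheses: Lemma \ref{lem13} gives $h_a(x_a)\ge \frac{\sqrt a}{13}$ for $a\gg 1$, whereas the two preceding lemmas require the strict bound $h_a(x_a)>\frac{\sqrt a}{A}$. This is harmless: fix any $A>13$, for instance $A=\frac{131}{10}$, so that $h_a(x_a)\ge\frac{\sqrt a}{13}>\frac{\sqrt a}{A}$ for all large $a$, and the hypotheses of both conditional lemmas hold with this $A$. Next I would compute $27A^2=27\cdot\left(\frac{131}{10}\right)^2<27\cdot 172=4644<5000$, so one may choose $\delta>0$ with $27A^2+\delta<5000$; taking e.g. $\delta=300$ suffices. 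The first of the two preceding lemmas then yields $\frac{d^2}{dt^2}\log f(x)>\frac{x}{27A^2+\delta}>\frac{x}{5000}$ for $x\gg 1$, and the second yields $\frac{d^2}{da^2}\lambda(a)>\frac{1}{(27A^2+\delta)a}>\frac{1}{5000a}$ for $a\gg 1$, which is exactly the assertion.

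Since the substantive analysis — namely the lower bound $h_a(x_a)\ge\frac{\sqrt a}{13}$ and the two conditional convexity estimates controlling $\frac{d^2}{dt^2}\log f$ and $\lambda''$ in terms of that bound — has already been carried out, there is essentially no obstacle in this last step. The only point requiring a moment's attention is purely arithmetic: one must verify that the constant $13$ from Lemma \ref{lem13} is small enough that $27\cdot 13^2=4563$ leaves a margin below $5000$ wide enough to absorb both the perturbation $A\mapsto A+\varepsilon$ (needed to convert the non-strict bound into a strict one) and a positive choice of $\delta$. That margin, about $437$, is comfortable, so the conclusion follows at once.
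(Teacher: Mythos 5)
Your proposal is correct and follows essentially the same route as the paper, which simply combines Lemma \ref{lem13} with the two conditional lemmas by taking $A=13$ (so $27A^2=4563$) and choosing $\delta$ small enough that $27A^2+\delta<5000$. Your extra care in replacing $13$ by a slightly larger $A$ to convert the non-strict bound $h_a(x_a)\ge\sqrt{a}/13$ into the strict hypothesis is a harmless refinement of the same argument.
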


	As corollaries, we have the following.
	
	\begin{prop}\label{prop-tatilde}
		For $a$ large we have $\tilde t_a=t_{a+1}+O(\frac{\log a}{\sqrt{a}})$
	\end{prop}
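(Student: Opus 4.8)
The plan is to recognize $\tilde t_a=\lambda'(a)$ as the barycentre of the probability measure on $\mathbb R$ whose logarithmic density is the strictly concave function $g_{a+1}$, and to estimate how far this barycentre lies from the mode $t_{a+1}$ of that density. Substituting $x=e^t$ in \eqref{e:definition of ca} and \eqref{lambda derivative} (so that $x^{a}\,dx=e^{(a+1)t}\,dt$) gives
\begin{equation*}
c(a)^{-1}=\int_{-\infty}^{\infty}e^{g_{a+1}(t)}\,dt,\qquad
\tilde t_a=\frac{\int_{-\infty}^{\infty}t\,e^{g_{a+1}(t)}\,dt}{\int_{-\infty}^{\infty}e^{g_{a+1}(t)}\,dt}.
\end{equation*}
Setting $\psi_a(t)=g_{a+1}(t)-g_{a+1}(t_{a+1})$, which is $\le 0$ and satisfies $\psi_a(t_{a+1})=\psi_a'(t_{a+1})=0$ (the latter because $u(x_{a+1})=a+1$), this becomes
\begin{equation*}
\tilde t_a-t_{a+1}=\frac{\int_{-\infty}^{\infty}(t-t_{a+1})\,e^{\psi_a(t)}\,dt}{\int_{-\infty}^{\infty}e^{\psi_a(t)}\,dt},
\end{equation*}
so it suffices to bound the numerator from above and the denominator from below.

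For the numerator I would use that $\psi_a''(s)=-\frac{d^2}{dt^2}\log f(e^s)<-e^s/5000$ by Lemma \ref{cor-13}, which for $s$ within distance $1$ of $t_{a+1}$ is $<-x_{a+1}/(5000e)$; since $x_{a+1}\sim a$, integrating twice from $t_{a+1}$ yields the quadratic bound $\psi_a(t)\le -\tfrac{x_{a+1}}{10000e}(t-t_{a+1})^2$ for $|t-t_{a+1}|\le 1$, and, by concavity of $g_{a+1}$, genuine exponential decay of $e^{\psi_a}$ for $|t-t_{a+1}|>1$. Hence $\int_{-\infty}^{\infty}|t-t_{a+1}|\,e^{\psi_a(t)}\,dt\le \tfrac{10000e}{x_{a+1}}+O(e^{-ca})=O(1/a)$.

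For the denominator a direct computation gives $\int e^{\psi_a}\,dt=c(a)^{-1}f(x_{a+1})x_{a+1}^{-(a+1)}=\tfrac{c(a+1)}{c(a)}\,h_{a+1}(x_{a+1})$. Lemma \ref{lem13} gives $h_{a+1}(x_{a+1})\ge \sqrt{a+1}/13$, while $\tfrac{c(a+1)}{c(a)}=e^{\lambda(a)-\lambda(a+1)}\ge e^{-\lambda'(a+1)}=\tilde x_{a+1}^{-1}$, and the estimates of Section 2 give $\tilde x_a\le(1+o(1))a$ (one checks from the case analysis in the proof of the bound $\frac{d^2}{dt^2}\log f>x/(27A^2+\delta)$ that $\tilde x_a\le x_a=(1+o(1))a$ when $\tilde x_a\le x_a$, and $\tilde x_a\le \tfrac{27A^2+\delta}{27A^2}a$ otherwise). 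Therefore $\int e^{\psi_a}\,dt\gtrsim 1/\sqrt a$, and dividing the two estimates gives $|\tilde t_a-t_{a+1}|=O(1/\sqrt a)$, which is in fact slightly stronger than the asserted $O(\log a/\sqrt a)$.

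The one genuinely delicate point is the lower bound on the denominator $\int e^{\psi_a}\,dt$ (equivalently, the ratio of total mass to peak mass of $e^{g_{a+1}}$): this is exactly where Lemma \ref{lem13} and the two-sided control $\tilde x_a\asymp a$ from Section 2 are needed. The numerator estimate and everything else is a routine quadratic-concentration (Laplace-type) computation driven by Lemma \ref{cor-13}.
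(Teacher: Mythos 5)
Your proposal is correct, and at its core it is the same Laplace-type concentration argument as the paper's: both exploit the curvature lower bound of Lemma \ref{cor-13} to show that the log-concave density $e^{g_{a+1}(t)}$ concentrates near its peak $t_{a+1}$, so that its barycentre $\tilde t_a$ (via \eqref{lambda derivative}) is close to $t_{a+1}$. The difference is in the bookkeeping of the normalizing constant: the paper uses the fact that $\int_0^\infty h_a(x)^{-1}dx=1$ and splits into a window of width $C\log a/\sqrt a$ plus superpolynomially small tails, which costs the factor $\log a$; you instead bound the first moment by a Gaussian estimate (giving $O(1/a)$) and bound the partition function from below via Lemma \ref{lem13}, which buys the slightly sharper conclusion $\tilde t_a-t_{a+1}=O(1/\sqrt a)$ already at this stage (the paper only recovers this later, in Corollary \ref{cor-ta}, after the refined estimates). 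One remark on your denominator: the detour through $\tfrac{c(a+1)}{c(a)}\ge e^{-\lambda'(a+1)}=\tilde x_{a+1}^{-1}$ and the upper bound $\tilde x_{a+1}\le(1+o(1))a$ (extracted from the rather delicate case analysis in Section 2) is unnecessary, since $\tfrac{c(a+1)}{c(a)}h_{a+1}(x_{a+1})=\tfrac{f(x_{a+1})}{c(a)x_{a+1}^{a+1}}=\tfrac{h_a(x_{a+1})}{x_{a+1}}$, and $h_a$ attains its minimum at $x_a$, so $h_a(x_{a+1})\ge h_a(x_a)\ge\sqrt a/13$ by Lemma \ref{lem13} and $x_{a+1}=(1+o(1))a$ suffice; this removes the only fragile citation in your argument while keeping the stronger bound.
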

\begin{proof}
Notice $t-\log h_a(x)=g_{a+1}(t)+\log c(a)$, and $g_{a+1}(t)$ has unique maximum at $t_{a+1}$.  By Lemma \ref{cor-13} and the standard Laplace's method we see that there  exists a constant $C>0$ so that $$\int_{|t-t_{a+1}|>C\frac{\log a}{\sqrt{a}}}\frac{t}{h_a(x)}dx=\epsilon(a),$$
where $\epsilon(a)$ denotes a quantity which is $O(a^{-k})$ as $a\rightarrow\infty$ for all $k>0$. 
	So by \eqref{lambda derivative} we have $$\tilde t_a=t_{a+1}+\int_{|t-t_{a+1}|\leq C\frac{\log a}{\sqrt{a}}}\frac{t-t_{a+1}}{h_a(x)}dx+\epsilon(a)=t_{a+1}+O(\frac{\log a}{\sqrt{a}}).$$
\end{proof}
	
	\begin{prop}\label{prop-big-upp}
		We have $$\frac{d^2}{da^2}\lambda(a)<2*10^7 \frac{1}{a}$$ for $a$ large enough, and $$\frac{d^2}{dt^2}\log f(x)<2*10^7 x$$ for $x$ 
		large enough.
		
	\end{prop}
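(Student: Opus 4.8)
The plan is to recognize both quantities as variances of log-concave probability measures and to bound them from above by balancing the \emph{lower} bound $h_a(x_a)\ge\sqrt a/13$ of Lemma~\ref{lem13} (which caps the peak of the relevant measure) against the \emph{lower} bounds on second derivatives from Lemma~\ref{cor-13} (which force the measure to be sharply concentrated); I will freely use that $x_a/a\to1$, $n_x/x\to1$, and that $c(a)=e^{-\lambda(a)}$ decays faster than any exponential (by Estimate~(3.9) of~\cite{sun-sun-2}), so that the region $x=O(1)$ never contributes. For $\lambda''(a)$ I would pass to $t=\log x$: since $\tfrac{x^a}{f(x)}\,dx=e^{g_{a+1}(t)}\,dt$, the measure $d\mu_a:=c(a)e^{g_{a+1}(t)}\,dt$ is a probability measure with mean $\int t\,d\mu_a=\lambda'(a)=\tilde t_a$ by~\eqref{lambda derivative}, and by~\eqref{lambda second derivative} one has $\lambda''(a)=\operatorname{Var}_{\mu_a}(t)$. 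Its density is maximal at $t_{a+1}$, the critical point of $g_{a+1}$, with peak value $c(a)e^{g_{a+1}(t_{a+1})}=x_{a+1}/h_a(x_{a+1})\le 13x_{a+1}/\sqrt a$, the inequality because $x_a$ minimizes $h_a$ and Lemma~\ref{lem13} applies. Since a variance is minimized at the mean, $\lambda''(a)\le\int(t-t_{a+1})^2\,d\mu_a$. Because $(-g_{a+1})''(t)=\tfrac{d^2}{dt^2}\log f(e^t)>e^t/5000$ by Lemma~\ref{cor-13}, integrating twice from $t_{a+1}$ gives $g_{a+1}(t)-g_{a+1}(t_{a+1})\le-\tfrac{x_{a+1}}{10000}(t-t_{a+1})^2$ for $t\ge t_{a+1}$, and a parallel computation on the left gives $g_{a+1}(t)-g_{a+1}(t_{a+1})\le-\tfrac{x_{a+1}}{5000}\psi(t_{a+1}-t)$ with $\psi(s)=s-1+e^{-s}$, which satisfies $\tfrac{s^2}{2}(1-\tfrac s3)\le\psi(s)\le\tfrac{s^2}{2}$; on the left one restricts to $|t-t_{a+1}|\le R$ with $R\to0$ (so the effective rate there also tends to $\tfrac{x_{a+1}}{10000}$) and disposes of $|t-t_{a+1}|>R$ as $o(x_{a+1}^{-3/2})$. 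Evaluating the Gaussian integrals,
\[
\lambda''(a)\ \le\ \frac{13x_{a+1}}{\sqrt a}\cdot\frac{\sqrt\pi}{2}\Big(\frac{10000}{x_{a+1}}\Big)^{3/2}(1+o(1))\ =\ \frac{13\sqrt\pi\cdot10^{6}}{2\sqrt{a\,x_{a+1}}}(1+o(1))\ <\ \frac{2\cdot10^{7}}{a}
\]
for $a$ large, since $x_{a+1}/a\to1$.

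For $\tfrac{d^2}{dt^2}\log f$ I would argue dually. The excerpt already gives $\tfrac{d^2}{dt^2}\log f(x)=\sum_i(i-u(x))^2\tau_x(i)=\operatorname{Var}_{\tau_x}(i)$, where $\tau_x(i)=h_i(x)^{-1}=c_i x^i/f(x)$ is a log-concave probability distribution on $\N$ with $\log\tau_x(i)=F_x(i)-\log f(x)$. Letting $n_x$ be the maximum of the concave function $F_x$, so that $x=\tilde x_{n_x}$, one has $\max_i\tau_x(i)\le h_{n_x}(\tilde x_{n_x})^{-1}\le h_{n_x}(x_{n_x})^{-1}\le 13/\sqrt{n_x}$ by Lemma~\ref{lem13}. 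Taking $n_x$ as center, $\operatorname{Var}_{\tau_x}(i)\le\sum_i(i-n_x)^2\tau_x(i)$; for $|i-n_x|\le\epsilon n_x$ a second-order expansion of $F_x$ at $n_x$ together with $\lambda''(\xi)>\tfrac1{5000\xi}$ (Lemma~\ref{cor-13}) gives $F_x(i)-F_x(n_x)\le-\tfrac{(i-n_x)^2}{10000(1+\epsilon)n_x}$, so
\[
\sum_{|i-n_x|\le\epsilon n_x}(i-n_x)^2\tau_x(i)\ \le\ \frac{13}{\sqrt{n_x}}\sum_i(i-n_x)^2 e^{-\frac{(i-n_x)^2}{10000(1+\epsilon)n_x}}\ =\ \frac{13\sqrt\pi}{2}(1+\epsilon)^{3/2}10^{6}\,n_x\,(1+o(1)),
\]
the sum matching the Gaussian integral because its width $\sqrt{n_x}$ is large. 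The complementary range $|i-n_x|>\epsilon n_x$ is handled in the same way — the cruder rate $\tfrac1{10000 n_x}$ for $i\le 2n_x$, and for $i>2n_x$ the convexity estimate $\lambda(i)\ge\lambda(n_x)+(\log x)(i-n_x)+\tfrac{(i-n_x)^2}{10000 i}$ (using $\lambda'(n_x)=\log x$) forcing exponential decay, while $\tau_x(i)\to0$ for bounded $i$ since $\log f(x)\to\infty$ — and contributes $o(n_x)$. Letting $x\to\infty$ and then $\epsilon\to0$, and using $n_x/x\to1$, gives $\tfrac{d^2}{dt^2}\log f(x)\le\big(\tfrac{13\sqrt\pi}{2}10^{6}+o(1)\big)x<2\cdot10^{7}x$ for $x$ large.

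The only genuinely delicate point — and the reason the constant $2\cdot10^{7}$ must be earned rather than simply read off — is that a lazy use of the convexity bounds on a fixed neighbourhood of the mode loses a multiplicative factor: the cut-off ($R$ on the left in the first part, $\epsilon$ in the second) must be allowed to shrink to $0$ with $a$ so that the effective rate of concavity on the bulk of the mass approaches the full value $x/5000$, respectively $\tfrac1{5000 n_x}$. Everything else — the Gaussian integrals, the expansion $\psi(s)\sim s^2/2$, and the fact that the super-exponential decay of $c(a)=e^{-\lambda(a)}$ annihilates the contribution of $x=O(1)$ — is routine bookkeeping.
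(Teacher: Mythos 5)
Your proposal is correct and follows essentially the same route as the paper's proof: both arguments cap the peak of the relevant log-concave density by Lemma \ref{lem13} (via $h_a(x_a)\ge \sqrt a/13$), dominate the decay away from the mode by a Gaussian whose rate comes from the lower bounds in Lemma \ref{cor-13}, replace the variance by the second moment about the mode ($t_{a+1}$, resp.\ $n_x\sim a$), and evaluate the same Gaussian integrals to land at roughly $1.2\cdot 10^{7}<2\cdot 10^{7}$. Your handling of the weaker concavity on the left of the mode (the function $\psi$ and the shrinking window) is in fact more explicit than the paper's, which simply absorbs that loss into the constant $11000$.
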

	\begin{proof}
		Let $x=\tilde x_a$, then $\tau_x(i)$ achieves its maximum at $i=a$. So
		\begin{eqnarray*}
			\frac{d^2}{dt^2}\log f(x)=\frac{d^2}{dt^2}\log h_a(x)
			&=&\sum (i-a-\frac{d}{dt}h_a(x))^2\tau_x(i)\\
			&\leq &\sum (i-a)^2\tau_x(i)\\
			&\leq &\frac{1}{h_a(\tilde x_a)}(O(1)+\int_{-\infty}^\infty y^2e^{-\frac{y^2}{10000a}}dy)\\
			&\leq &\frac{13}{\sqrt{a}}(O(1)+\frac{(10000a)^{3/2}\sqrt{\pi}}{2})\\
			&\leq &1.2*10^7 a\\
			&\leq &2*10^7 \tilde x_a,
		\end{eqnarray*}
		
		Therefore $\frac{du}{dx}<2*10^7$. So $x_{a+1}=x_a+O(1)$. 
	Similarly, since $$\frac{d^2}{da^2}\lambda(a)=\int\frac{(\log x-\tilde t_a)^2dx}{h_a(x)}
		\leq \int\frac{(\log x-t_{a+1})^2dx}{h_a(x)},$$ we can substitute $y=\log x-t_{a+1}$ to get
		\begin{eqnarray*}
			\frac{d^2}{da^2}\lambda(a)
			&\leq &\frac{x_{a+1}}{h_a(x_{a+1})}\int_{-\infty}^{\infty}e^{-\frac{a}{11000}}y^2dy\\
			&\leq &14\sqrt{a}\frac{\sqrt{\pi}(11000)^{3/2}}{2a^{3/2}}\\
			&\leq &2*10^7 \frac{1}{a}.\\
		\end{eqnarray*}
		
	\end{proof}
Now we get a more quantitative estimate on $x_a$.

	\begin{prop}
		$x_a=a+O(\sqrt{a}\log a)$  for $a$ large enough.
	\end{prop}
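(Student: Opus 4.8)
The plan is to turn the ODE/functional-equation $u(x) = x f'(x)/f(x)$ with $u(x_a) = a$ into a differential inequality for $x_a$ as a function of $a$, and then integrate. Recall that $t = \log x$ and $\frac{du}{dx} = x^{-1}\cdot x\frac{d^2}{dt^2}\log f(x)\cdot\frac{1}{1} = \frac{1}{x}\cdot\frac{d^2}{dt^2}\log f(x)$; more precisely $\frac{du}{dt} = \frac{d^2}{dt^2}\log f(x)$. By Lemma \ref{cor-13} and Proposition \ref{prop-big-upp} we have, for $x$ large, the two-sided bound
\begin{equation*}
\frac{x}{5000} < \frac{d^2}{dt^2}\log f(x) < 2\cdot 10^7 x,
\end{equation*}
so that $\frac{du}{dt}$ is comparable to $x$, i.e. comparable to $u$ itself up to the first-order estimate $u/x \to 1$ from Proposition \ref{prop-start}. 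Differentiating $u(x_a) = a$ in $a$ gives $\frac{dx_a}{da} = \big(\frac{du}{dx}\big)^{-1}\big|_{x = x_a} = \frac{x_a}{\frac{d^2}{dt^2}\log f(x_a)}$. Combining with the bounds above, and using $x_a = a(1+o(1))$, we obtain
\begin{equation*}
\frac{a}{2\cdot 10^7}(1+o(1)) \;<\; \frac{dx_a}{da} \;<\; 5000\,(1+o(1)),
\end{equation*}
wait — that is the wrong direction; the correct reading is $\frac{1}{2\cdot 10^7} < \frac{dx_a}{da}\cdot\frac{1}{x_a}\cdot\frac{d^2}{dt^2}\log f = 1$, hence $\frac{1}{2\cdot 10^7 x_a} < \frac{dx_a}{da}\Big/\,? $. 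Let me restate cleanly: since $\frac{du}{dt} = \frac{d^2}{dt^2}\log f$ lies between $\frac{x}{5000}$ and $2\cdot 10^7 x$, and $\frac{dt_a}{da} = \big(\frac{du}{dt}\big)^{-1}$, we get $\frac{1}{2\cdot 10^7 x_a} < \frac{dt_a}{da} < \frac{5000}{x_a}$. So the natural move is to integrate the estimate for $\frac{dt_a}{da}$, or equivalently to compare $x_a$ with the integer-indexed quantities.

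The cleaner route, which I would actually carry out, is to use the companion objects already set up in the excerpt. By Proposition \ref{prop-tatilde}, $\tilde t_a = t_{a+1} + O(\log a/\sqrt a)$, and $\tilde x_a$ is (essentially by construction) the point where $n_x = a$, i.e. $\tilde t_a = \lambda'(a)$. On the other hand the rough degree-2 bounds on $\lambda''$ give $\frac{1}{5000 a} < \lambda''(a) < 2\cdot 10^7/a$, hence $\lambda'(a) = \log a + O(1)$ after integration — actually we want the sharper statement. The key identity is $t_a$ vs.\ $n_{x_a}$: concavity of $F_x(n) = n\log x - \lambda(n)$ together with the second-derivative bound on $\lambda$ shows that $n_{x_a}$ and $a$ differ by $O(\sqrt a \log a)$, because $F_{x_a}'(a) = \log x_a - \lambda'(a) = t_a - \tilde t_a$ is controlled, and $\lambda''$ of size $\asymp 1/a$ converts a value-gap in $F'$ of size $\epsilon$ into an argument-gap of size $\epsilon\cdot a$. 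So I would: (i) show $t_a - \tilde t_a = O(\log a/\sqrt a)$ using Proposition \ref{prop-tatilde} plus $t_a - t_{a+1} = O(1/x_a) = O(1/a)$ (from $\frac{dt_a}{da} = O(1/x_a)$ and $\frac{du}{dt}>x/5000$); (ii) deduce $|n_{x_a} - a| = O(\sqrt a\log a)$ from the mean value theorem applied to $\lambda'$ together with $\lambda''(\xi) \asymp 1/\xi$; (iii) relate $x_a$ to $\tilde x_{n_{x_a}}$ — but $\tilde x_{n_{x_a}} = x_a$ by definition of $n_{x}$, so actually what we get directly is $x_a = \tilde x_b$ with $b = n_{x_a} = a + O(\sqrt a\log a)$; (iv) finally estimate $\tilde x_b - b$ using $\lambda'(b) = \log \tilde x_b$ and the integrated bound $\lambda'(b) = \log b + c + O(\text{?})$, concluding $\tilde x_b = b\cdot e^{c}(1+o(1))$ — and the normalization forces the correct leading constant so that $x_a = a + O(\sqrt a \log a)$.

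Concretely, I would integrate $\lambda'' $: from $\lambda'(a) - \lambda'(a_0) = \int_{a_0}^a \lambda''$, and from the fact (provable from Lemma \ref{lem13}, $h_a(x_a) \asymp \sqrt a$, via Laplace's method as in Proposition \ref{prop-tatilde}) that $\int_0^\infty h_a(x)^{-1}dx = 1$ pins down $e^{\lambda'(a)} = \tilde x_a$ to be $a(1+o(1))$ with an error that, combined with the $\asymp 1/a$ bound on $\lambda''$ integrated against the Laplace correction, is $O(\sqrt a \log a)$. Then $x_a = \tilde x_{n_{x_a}} = \tilde x_{a + O(\sqrt a \log a)} = \big(a + O(\sqrt a \log a)\big)\big(1 + o(1)\big)$; since $\frac{d\tilde x_b}{db}$ is bounded above by a constant (this is $\frac{du}{dt} > x/5000$ read the other way, giving $\frac{dt_b}{db} = \lambda''(b) < 2\cdot 10^7/b$, so $\frac{d\tilde x_b}{db} = \tilde x_b \lambda''(b) = O(1)$), an argument-shift of $O(\sqrt a \log a)$ in $b$ moves $\tilde x_b$ by only $O(\sqrt a \log a)$, yielding $x_a = a + O(\sqrt a \log a)$.

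\textbf{Main obstacle.} The genuinely delicate point is step (ii)–(iv): converting the crude two-sided bounds $\lambda''(a) \asymp 1/a$ and $t_a - \tilde t_a = O(\log a/\sqrt a)$ into the claimed $O(\sqrt a \log a)$ error for $x_a - a$ without losing a power of $a$. A value-gap of $O(\log a/\sqrt a)$ in $F_{x_a}'$ gets amplified by $1/\lambda''(\xi) \asymp \xi \asymp a$ to give an argument-gap $O(\sqrt a \log a)$ for $n_{x_a}$ — this is exactly on the boundary of what the rough estimates allow, and one must check the implied constants and the range of $\xi$ line up (using $n_{x_a} \to \infty$). The other subtlety is justifying $\int_0^\infty h_a(x)^{-1}\,dx = 1$ as an \emph{exact} identity (it follows from the definition of $c(a)$ and $h_a$), and then extracting the leading behavior of $\tilde x_a$ from it by Laplace's method with an error term sharp enough to survive; this is where Lemma \ref{cor-13} and Proposition \ref{prop-tatilde} do the real work.
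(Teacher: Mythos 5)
There is a genuine gap, and it sits exactly where you flagged your ``main obstacle''. Steps (i)--(iii) of your chain are fine: from Proposition \ref{prop-tatilde}, $t_{a}-\tilde t_a=O(\log a/\sqrt a)$, and since $\lambda'(n_{x_a})=t_a$, $\lambda'(a)=\tilde t_a$ and $\lambda''\geq \frac{1}{5000 a}$ (Lemma \ref{cor-13}), the mean value theorem does give $|n_{x_a}-a|=O(\sqrt a\log a)$, and indeed $x_a=\tilde x_{n_{x_a}}$. But step (iv) is circular: the estimate $\tilde x_b=b+O(\sqrt b\log b)$, i.e.\ $\lambda'(b)=\log b+O(\log b/\sqrt b)$, is equivalent to the proposition itself, because $\tilde x_b=x_{b+1}\bigl(1+O(\log b/\sqrt b)\bigr)$ and $x_{b+1}=x_b+O(1)$ (Propositions \ref{prop-tatilde} and \ref{prop-big-upp}). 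Your proposed substitute cannot supply it. Integrating the two-sided bound $\frac{1}{5000 a}<\lambda''(a)<2\cdot 10^7/a$ only localizes $\lambda'(a)$ up to an additive error of size $\mathrm{const}\cdot\log a$ with wildly different constants, nowhere near $O(\log a/\sqrt a)$; and the identity $\int_0^\infty h_a(x)^{-1}dx=1$ holds \emph{by definition} of $c(a)$ for a single (real) index $a$, so Laplace's method applied to it only reproduces $h_a(x_a)\asymp\sqrt a$ up to the same loose constants (note that $h_a(x_a)\sim\sqrt{2\pi a}$ is only established later, in Theorem \ref{main-1}, and itself uses the present proposition). At this stage the only a priori location information is $u/x\to1$ from Proposition \ref{prop-start}, i.e.\ $x_a=a+o(a)$, and nothing in your argument upgrades $o(a)$ to $O(\sqrt a\log a)$.

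The missing ingredient is a quantitative use of the balanced equation across a whole range of indices, which is what the paper's proof does: it sums the exact identities $\int_0^\infty \frac{c_ix^i}{f}\,dx=1-\beta\delta_{0i}$ over $0\leq i\leq a$, so that the total mass is $a+1-\beta$, and then exploits the concentration coming from Lemma \ref{cor-13} in both directions. On one side, each integrand is negligible beyond $x_a+\sqrt a\log a$, and $\frac{\sum_{i\leq a}c_ix^i}{f}\leq 1$, giving $x_a+\sqrt a\log a> a+1-\beta+\epsilon(a)$; on the other side, $\frac{\sum_{i>a}c_ix^i}{f}=\epsilon(a)$ for $x\leq x_a-\sqrt a\log a$, so the ratio is $1-\epsilon(a)$ there and $a+1>x_a-\sqrt a\log a+\epsilon(a)$. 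If you want to salvage your route, you must inject this summed balanced-equation input (or an equivalent quantitative statement pinning $\lambda'(a)-\log a$); the differential and single-index identities you invoke are insufficient in principle, not merely in the bookkeeping of constants.
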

	\begin{proof}
		First, we have $$\int_0^{x_a+\sqrt{a}\log a}\frac{c_ix^i}{f(x)}dx=1-\beta\delta_{0i}+\epsilon(a)$$ for $0\leq i\leq a$. So
		$$x_a+\sqrt{a}\log a>\int_0^{x_a+\sqrt{a}\log a}\frac{\sum_{i=0}^ac_ix^i}{f(x)}dx=a+1-\beta+\epsilon(a)$$
		For the other direction, we have $$\frac{\sum_{i>a}c_ix^i}{f(x)}=\epsilon(a)$$ for $x\leq x_a-\sqrt{a}\log a$.
		Therefore, 
		$$a+1>\int_0^{x_a-\sqrt{a}\log a}\frac{\sum_{i=0}^ac_ix^i}{f(x)}dx=x_a-\sqrt{a}\log a+\epsilon(a).$$
		
	\end{proof}

	\begin{remark}
	With these estimates, one can improve the estimates in Lemma \ref{lem13}
	to get $A=2.5$, then we have that $\frac{d^2}{dt^2}\log f(x)>\frac{x}{150}$ for $x$ large and $\frac{d^2}{da^2}\lambda(a)>\frac{1}{150a}$ for $a$ large. 
	Then one can generate better upper bounds by repeating the proof of Proposition \ref{prop-big-upp}. But this refinement will not be essential to us.
	To further narrow down the gap between the upper and lower bounds, we need the techniques in the following subsection.
	\end{remark}

\section{Dominating function}
In this section, we will develop the tool of dominating functions to narrow down the estimates for $\frac{d^2}{da^2}\lambda(a)$ and $\frac{d^2}{dt^2}\log f(x)$.
\subsection{half real line}
	
	\begin{theorem}\label{convex-general}
		Let $g(t)$ be a $C^{1}$ function on $\R_+$ satisfying:
		\begin{itemize}
			\item[(1)]$g(0)=0$;
			\item[(2)]$g'(0)=0$;
			\item[(3)]The left derivative and right derivative of $g'(t)$ exist for every $t$; 
			\item[(4)]$g(t)\in C^{2}$ except at finite points;
			\item[(5)]there exist $0<M_1<M_2<\infty $ such that $M_1<g''(t)<M_2$ for all  $t\in \R $.  	
		\end{itemize}
		Denote $$a(g)=\int_{0}^\infty e^{-g(t)}dt,$$ $$b(g)=\int_{0}^\infty t^2 e^{-g(t)}dt,$$ and $$d(g)=\frac{b(g)}{4(a(g))^3}.$$
		Then there exist $\frac{1}{12}<q<p<\frac12$ depending only on $M_1$ and $M_2$ such that
		$$\frac{1}{12}<q<d(g)<p<\frac12.$$
	\end{theorem}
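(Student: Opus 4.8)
I would begin with two reductions. First, $d(g)$ is invariant under $g(t)\mapsto g(\lambda t)$ for $\lambda>0$, since $a$ and $b$ rescale by $\lambda^{-1}$ and $\lambda^{-3}$; so after replacing $g(t)$ by $g(t/\sqrt{M_1})$ I may assume $M_1=1$ and write $M=M_2/M_1$. Second, since $g'(t)\ge M_1 t>0$ for $t>0$, the map $g$ is a strictly increasing bijection of $\R_+$; setting $\theta=g^{-1}$, substituting $v=g(t)$ and integrating by parts (using $\theta(0)=0$ and the Gaussian-type decay of $e^{-g}$) gives
\[
a(g)=\int_0^\infty \theta(v)e^{-v}\,dv,\qquad b(g)=\tfrac13\int_0^\infty \theta(v)^3e^{-v}\,dv,\qquad d(g)=\frac{1}{12}\cdot\frac{\mathbf E\,\theta(V)^3}{(\mathbf E\,\theta(V))^3},
\]
with $V$ standard exponential. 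Thus the forbidden endpoints $\frac1{12}$ and $\frac12=\frac{\mathbf E V^3}{12}$ correspond exactly to $\theta$ constant and $\theta$ linear, and the theorem says the constraints on $\theta$ (it is strictly concave, pinned at $0$ with $\theta'(0^+)=\infty$, and $w:=(\theta')^{-2}$ satisfies $w(0)=0$, $2M_1\le w'\le 2M_2$, whence $\sqrt{2v/M_2}\le\theta(v)\le\sqrt{2v/M_1}$ and $\theta'(v)\ge(2M_2v)^{-1/2}$) push it a definite, $M$-dependent distance from both.

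The lower bound would then be elementary. Jensen for $x\mapsto x^3$ gives $d(g)\ge\frac1{12}$; to make it strict I would bound the squared coefficient of variation of $\theta(V)$: from $\mathrm{Var}(\theta(V))=\tfrac12\mathbf E(\theta(V)-\theta(V'))^2$ ($V,V'$ i.i.d.) and $\theta'(v)\ge(2M_2v)^{-1/2}$ one gets $\mathrm{Var}(\theta(V))\ge\frac{2}{M_2}\mathrm{Var}(\sqrt V)=\frac{2-\pi/2}{M_2}$, while $(\mathbf E\theta(V))^2\le\mathbf E\theta(V)^2\le\frac2{M_1}$. Inserting this into $\mathbf E\theta(V)^3\ge(\mathbf E\theta(V)^2)^{3/2}=(\mathbf E\theta(V))^3(1+\mathrm{Var}/(\mathbf E\theta)^2)^{3/2}$ and using $(1+u)^{3/2}\ge1+\tfrac32u$ gives $d(g)\ge\frac1{12}(1+c\,M_1/M_2)=:q>\frac1{12}$ with $c=\tfrac32(1-\pi/4)$.

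The upper bound is the real content, and is where the dominating function is needed: the envelope estimates (e.g.\ $b(g)\le a(g)/M_1$ with $a(g)\ge\sqrt{\pi/2M_2}$, giving only $d(g)\le\frac{M_2}{2\pi M_1}$) fail once $M_2/M_1>\pi$ because they treat $a$ and $b$ as independently extremizable, whereas one convex $g$ cannot be both as steep as $\frac{M_2}2 t^2$ near $0$ (shrinking $a$) and as flat as possible far out (enlarging $b$). I would recast this as an extremal problem: with $B(A):=\sup\{b(g):g\ \text{admissible},\ a(g)=A\}$ one has $\sup_g d(g)=\sup_A \frac{B(A)}{4A^3}$, and I claim the supremum defining $B(A)$ is attained by a \emph{dominating function} $G_{t_0}$ --- the piecewise‑quadratic convex function with $G_{t_0}''=M_2$ on $[0,t_0]$ and $G_{t_0}''=M_1$ on $[t_0,\infty)$ ("steep then flat", with $t_0=0$ the pure $M_1$-Gaussian and $t_0=\infty$ the pure $M_2$-Gaussian). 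The clean way to see the extremizer has this shape is a Pontryagin argument treating $g''\in[M_1,M_2]$ as a control: the Hamiltonian is affine in the control, so optimal $g''$ is $M_2$ or $M_1$ according to $\mathrm{sgn}\,p_2$, and the costate equations $\dot p_1=(t^2-\lambda)e^{-g}$, $\dot p_2=-p_1$ with $p_i(\infty)=0$ force $p_2$ to change sign at most once, from $-$ to $+$, which is precisely the $G_{t_0}$ profile. For this family everything is explicit: completing the square in the quadratic pieces gives $a(G_{t_0})=M_2^{-1/2}\Phi(s,M)$ and $b(G_{t_0})=M_2^{-3/2}\Psi(s,M)$ with $s=t_0\sqrt{M_2}$ and $\Phi,\Psi$ built from $\int_x^\infty e^{-u^2/2}du$, so $d(G_{t_0})=\frac{\Psi(s,M)}{4\Phi(s,M)^3}$ depends only on $s$ and $M$, equals $\frac1{2\pi}$ at $s=0$ and as $s\to\infty$, and I would bound $\sup_s d(G_{t_0})$ by some $p=p(M_1,M_2)<\tfrac12$ using the Mills-ratio inequalities $\frac{x}{x^2+1}<e^{x^2/2}\int_x^\infty e^{-u^2/2}du<\frac1x$; this $p$ necessarily tends to $\tfrac12$ as $M_2/M_1\to\infty$, matching the fact that $d$ really can be driven toward $\tfrac12$ when the curvature ratio is unbounded.

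The step I expect to be hardest is the domination reduction itself --- replacing the supremum of the non-local, non-convex functional $b(g)/a(g)^3$ by the explicit one-parameter family $\{G_{t_0}\}$. Pontryagin gives only necessary conditions, so one must also prove existence of an optimizer over this non-compact class (including the degenerate Gaussian endpoints) and exclude additional switches; a more hands-on substitute, which is presumably "the dominating function" mechanism, is a direct exchange argument: perturb $g''$ up on a short interval near $0$ and down on a short interval far out so as to keep $a(g)$ fixed, show $b(g)$ strictly increases, and iterate to drive any admissible $g$ toward a $G_{t_0}$. The remaining delicate point is the final $\int_x^\infty e^{-u^2/2}du$-estimate, which must be sharp enough to yield the strict inequality $d(G_{t_0})<\tfrac12$ uniformly in $s$ for every fixed ratio $M$.
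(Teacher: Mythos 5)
Your proposal is correct in outline, and for the upper bound it is essentially the paper's route: your family $G_{t_0}$ (curvature $M_2$ up to a switch point, then $M_1$) is exactly the paper's extremal profile $g_{s_0,l(s_0)}$ (the later $g_{m,c}$), and your ``hands-on exchange'' --- raise $g''$ to $M_2$ near $0$ and lower it to $M_1$ far out at fixed $a(g)$, and check $b(g)$ increases --- is precisely the paper's operation A. The paper, however, does this in one step rather than by Pontryagin or iteration: it picks an admissible pair $(s,l)$ so the mass is preserved, notes that $g_{s,l}-g$ changes sign exactly once, say at $l_1$, so $b(g)-b(g_{s,l})=\int_0^\infty(t^2-l_1^2)(e^{-g}-e^{-g_{s,l}})\,dt\le 0$, and a continuity argument in $s$ lands directly on the two-piece profile; the strictness $p<\tfrac12$ is then obtained not by Mills-ratio bounds (those erfc estimates appear only in the next subsection for $p(m),q(m)$) but by relaxing $(M_1,M_2)$ to $(\delta,1/\delta)$ and identifying the limit as the linear profile with $d=\tfrac12$. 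So the two steps you flag as open are exactly the ones the paper closes with this single-crossing comparison and limit argument; nothing in your plan would fail, it just needs that comparison lemma written out plus one concrete bound $d(G_{t_0})<\tfrac12$ for fixed ratio. Your lower bound is genuinely different and arguably sharper than the paper's: the substitution $\theta=g^{-1}$, giving $d(g)=\tfrac1{12}\,\mathbf{E}\,\theta(V)^3/(\mathbf{E}\,\theta(V))^3$, with $w=(\theta')^{-2}$, $w'=2g''$, Lyapunov's inequality and the variance estimate via $|\theta(v)-\theta(v')|\ge\sqrt{2/M_2}\,|\sqrt v-\sqrt{v'}|$, yields the explicit $q=\tfrac1{12}\bigl(1+\tfrac32(1-\tfrac\pi4)M_1/M_2\bigr)$, whereas the paper only gets an unquantified $q>\tfrac1{12}$ from the symmetric surgery (operation B) and a limiting profile with $d=\tfrac1{12}$.
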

	\begin{remark}
		Notice that if $\tilde g(y)=g(\rho y)$ for some $\rho>0$,
	then $d(\tilde g)=d(g)$. So the bounds $p$ and $q$  depend only on the ratio $\frac{M_2}{M_1}$.
	\end{remark}
	
	\begin{proof}

		For each pair $0<s<l<\infty$, we can define a function $g_{s,l}(t)$ satisfying $g_{s, l}(0)=g_{s,l}'(0)=0$ and 
		$$g_{s,l}''(t)=\begin{cases}
			M_2, &t< s\\
			g''(t), &l>t\geq s\\
			M_1, &t\geq l
		\end{cases} $$
		We call a pair $0<s<l<\infty$ an admissible pair if $a(g_{s,l})=1$. It is easy to see that when $s$ is small enough, there exists  $l>s$ such that $(s,l)$ is admissible.
		Notice that for an admissible pair $(s,l)$ the following holds:
		$$\exists\ l_1>l \ \  \textit{satisfying}\begin{cases}
			g(t)\leq g_{s,l}(t), & t\leq l_1\\
			g(t)\geq g_{s,l}(t), & t>l_1
		\end{cases}   $$ 
		Therefore 
		$$b(g)-b(g_{s,l})=\int_{0}^\infty (t^2-l_1^2)(e^{-g(t)}-e^{-g_{s, l}(t)})dt\leq0.$$
		Hence, we have $d(g)\leq d(g_{s,l}).$ 

		\textbf{For later references, we will call this operation that replaces $g$ with $g_{s,l}$ operation A. }

		Let $l(s)$ be the smallest $l$ such that $(s,l)$ is admissible. Then by a straightforward continuity argument, one sees that $\exists s_0>0$ such that $s_0=l(s_0)$. Clearly, $g_{s_0,l(s_0)}$ is determined by the pair $(M_1, M_2)$, so we can denote by $d_{M_1,M_2}=d(g_{s_0,l(s_0)})$. Then one easily sees that for $0<M_1'\leq M_1\leq M_2\leq M_2'$, we have $d(g)\leq d_{M_1',M_2'}$. So for $\delta>0$ small enough, we have 
		$$d(g)\leq d_{\delta,1/\delta}.$$
		We denote by $s_\delta$ the $s_0$ corresponding to the pair $(\delta,1/\delta)$, and by $g_\delta$ the corresponding function $g_{s_0,l(s_0)}$. Then we have $$g_\delta(t)=\begin{cases}
		\frac{t^2}{2\delta}, & t\leq s_\delta\\
		\frac{s_\delta^2}{2\delta}+\frac{s_\delta}{\delta}|t-s_\delta|+\frac{\delta}{2}(t-s_\delta)^2, & t> s_\delta
		\end{cases}$$
		Then in order for $a(g_\delta)=1$, $s_\delta$ has to converge to $0$ when $\delta\to 0$ and $\lim_{\delta\to 0}\frac{s_\delta}{\delta}=1$. So $$\lim_{\delta\to 0}g_\delta(t)=g_0(t)=t.$$
		Then by the dominated convergence $\lim_{\delta\to 0}d(g_\delta)=d(g_0)=\frac{1}{2}$. Therefore we have proved the upper bound.

		For the lower bound, the argument is similar. We also replace $g$ with $\tilde{g}_{s,l}$ satisfying $$\tilde{g}_{s,l}''(t)=\begin{cases}
			M_1, &t< s\\
			g''(t), &l>t\geq s\\
			M_2, &t\geq l
		\end{cases} $$

		\textbf{For later references, we will call this operation that replaces $g$ with $\tilde{g}_{s,l}$ operation B. }

		And we also get a function 
		$$g_\delta(t)=\begin{cases}
			\frac{\delta}{2}t^2, & t\leq s_\delta\\
			\frac{\delta}{2}s_\delta^2+\delta s_\delta(t-s_\delta)+\frac{1}{2\delta}(t-s_\delta)^2, & t> s_\delta
			\end{cases}$$
		satisfying $d(g)>d(g_\delta)$. Again in order for $a(g_\delta)=1$, $s(\delta)$ has to converge to $1/2$ when $\delta\to 0$ and $$\lim_{\delta\to 0}g_\delta(t)=g_0(t)=\chi_I(t),$$
		where $\chi_I(t)$ is the characteristic function of the interval $I=[0,1]$.
Moreover,  $a(g_0)=1, b(g_0)=\frac{1}{3}$, so $d(g_0)=\frac{1}{12}$. 
		Again by dominated convergence $\lim_{\delta\to 0}d(g_\delta)=d(g_0)=\frac{1}{12}$. This proves the lower bound.
	\end{proof}

	By dilation, in the proof of Theorem \ref{convex-general} we have actually proved the following two lemmas:

	\begin{lem}\label{lem-surgery}
		Let $g(y)$ be a  function satisfying the conditions in Theorem \ref{convex-general}. Let $m=\frac{M_1}{M_2}$.
		Then there exist $c>0$ and a function $f(y)$ satisfying:
		\begin{itemize}
			\item $f(y)=\frac{y^2}{2}$ for $y\leq c$;
			\item $f(y)=\frac{my^2}{2}+(1-m)cy+\frac{mc^2}{2}-\frac{c^2}{2}$ for $y>c$;
			\item $d(f)\geq d(g)$.
		\end{itemize}
	\end{lem}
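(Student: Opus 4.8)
The plan is to extract Lemma~\ref{lem-surgery} directly from the proof of Theorem~\ref{convex-general} by tracking the scaling behaviour, since the theorem's proof already constructs exactly the required piecewise function but phrases it in terms of the fixed pair $(M_1,M_2)$ rather than in the scale-normalized form. First I would recall that in the proof of Theorem~\ref{convex-general}, operation A replaces $g$ by the function $g_{s_0,l(s_0)}$ where $s_0=l(s_0)$ is the unique critical radius, and this function has $g''\equiv M_2$ on $[0,s_0)$ and $g''\equiv M_1$ on $[s_0,\infty)$, with $d(g)\le d(g_{s_0,l(s_0)})$. Writing this out explicitly (as was done for the pair $(\delta,1/\delta)$ in the proof),
$$g_{s_0,l(s_0)}(t)=\begin{cases}\frac{M_2}{2}t^2, & t\le s_0\\ \frac{M_2}{2}s_0^2+M_2 s_0(t-s_0)+\frac{M_1}{2}(t-s_0)^2, & t>s_0.\end{cases}$$

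Next I would apply the dilation observation from the remark after Theorem~\ref{convex-general}: if $\tilde g(y)=g(\rho y)$ then $d(\tilde g)=d(g)$. So I set $\rho=1/\sqrt{M_2}$ and let $f(y)=\frac{1}{M_2}g_{s_0,l(s_0)}\!\big(y/\sqrt{M_2}\big)$ — wait, more carefully: the quantity $d$ is scale-invariant under $g\mapsto g(\rho\cdot)$ but we also want to rescale the \emph{values} of $g$. Since $d(g)=\frac{b(g)}{4a(g)^3}$ is homogeneous of degree $-2$ in an overall additive-to-$g$... no: $d$ is genuinely invariant only under the horizontal dilation $g\mapsto g(\rho y)$, which scales $g''$ by $\rho^2$. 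So to turn leading coefficient $M_2/2$ into $1/2$ I take $\rho=\sqrt{M_2}$, i.e. $f(y)\eqd g_{s_0,l(s_0)}(\sqrt{M_2}\,y)$, giving $f''(y)=M_2\,g''_{s_0,l(s_0)}(\sqrt{M_2}y)$, which equals $M_2\cdot M_2$ on the first piece — that is still wrong direction. The correct rescaling is $\rho=1/\sqrt{M_2}$: then $f''(y)=\frac{1}{M_2}g''(y/\sqrt{M_2})$, equal to $\frac{M_2}{M_2}=1$ on $\{y/\sqrt{M_2}<s_0\}$, i.e. on $\{y<c\}$ where $c\eqd s_0\sqrt{M_2}$, and equal to $\frac{M_1}{M_2}=m$ on $\{y\ge c\}$. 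Integrating $f''$ with $f(0)=f'(0)=0$ yields $f(y)=\frac{y^2}{2}$ for $y\le c$ and, by matching value and derivative at $c$ (where $f(c)=\frac{c^2}{2}$, $f'(c)=c$),
$$f(y)=\frac{m}{2}(y-c)^2+c(y-c)+\frac{c^2}{2}=\frac{m y^2}{2}+(1-m)cy+\frac{mc^2}{2}-\frac{c^2}{2}$$
for $y>c$, which is precisely the stated formula.

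Finally I would verify $d(f)\ge d(g)$: operation A gives $d(g)\le d(g_{s_0,l(s_0)})$, and the dilation invariance gives $d(f)=d(g_{s_0,l(s_0)})$, so $d(f)=d(g_{s_0,l(s_0)})\ge d(g)$. The constant $c=s_0\sqrt{M_2}>0$ is positive because $s_0>0$. I expect the only genuinely delicate point — and the main thing to be careful about — is getting the scaling convention exactly right (which way $d$ transforms, and hence whether one divides or multiplies by $\sqrt{M_2}$), together with confirming that the existence of the critical radius $s_0$ with $s_0=l(s_0)$, already established inside the proof of Theorem~\ref{convex-general} by the continuity argument, applies verbatim here; everything else is bookkeeping with the explicit piecewise formula and the matching conditions at $y=c$.
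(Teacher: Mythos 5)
Your argument is correct and is essentially the paper's own proof: the paper obtains this lemma precisely "by dilation" from the proof of Theorem \ref{convex-general}, where operation A at the critical admissible pair $s_0=l(s_0)$ yields the two-piece quadratic with curvatures $M_2$ then $M_1$ and satisfies $d(g)\le d(g_{s_0,l(s_0)})$. Your horizontal rescaling $f(y)=g_{s_0,l(s_0)}(y/\sqrt{M_2})$ with $c=s_0\sqrt{M_2}$, together with the dilation invariance of $d$ and the matching of value and derivative at $y=c$, is exactly that step made explicit, and it is carried out correctly.
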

	\begin{lem}\label{lem-surgery2}
	Let $g(y)$ be a convex function satisfying the conditions in Theorem \ref{convex-general}. Let $m=\frac{M_2}{M_1}$.
	Then there exist $c>0$ and a function $f(y)$ satisfying:
	\begin{itemize}
		\item $f(y)=\frac{y^2}{2}$ for $y\leq c$;
		\item $f(y)=\frac{my^2}{2}+(1-m)cy+\frac{mc^2}{2}-\frac{c^2}{2}$ for $y>c$.
		\item $d(f)\leq d(g)$.
	\end{itemize}
\end{lem}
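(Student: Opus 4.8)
The plan is to run the \emph{lower-bound} half of the proof of Theorem \ref{convex-general} — i.e. the half using ``operation B'' — and then correct by a single dilation. The key structural fact I will rely on is the dilation-invariance of $d$ noted in the remark after Theorem \ref{convex-general}: if $\tilde g(y)=g(\rho y)$ then $d(\tilde g)=d(g)$, which is exactly what lets us normalize the curvature at the origin to $1$ at the end without disturbing $d$. Thus Lemma \ref{lem-surgery2} is to Lemma \ref{lem-surgery} as the lower bound in Theorem \ref{convex-general} is to the upper bound.

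Concretely I would proceed as follows. First, by a dilation (using invariance of $d$) reduce to the case $a(g)=1$, exactly as in the proof of Theorem \ref{convex-general}; note that once $a(g)=1$, the bounds $g''>M_1$ and $g''<M_2$ force $M_1\le \pi/2\le M_2$ via $a(g)\le\int_0^\infty e^{-M_1t^2/2}dt$ and $a(g)\ge\int_0^\infty e^{-M_2t^2/2}dt$, which is what makes the continuity argument below work. Next apply operation B: for $0<s\le l$ let $\tilde g_{s,l}$ be the function with $\tilde g_{s,l}(0)=\tilde g_{s,l}'(0)=0$ and second derivative $M_1$ on $[0,s)$, $g''$ on $[s,l)$, $M_2$ on $[l,\infty)$, and call $(s,l)$ admissible if $a(\tilde g_{s,l})=1$. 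For an admissible pair, $\tilde g_{s,l}-g$ is concave on $[0,s)$, linear on $[s,l)$ and convex on $[l,\infty)$, so it starts at $0$ decreasing, stays negative, and (since $a(\tilde g_{s,l})=a(g)$) must return, crossing $0$ exactly once at some $l_1>l$; hence $g\ge\tilde g_{s,l}$ on $[0,l_1]$ and $g\le\tilde g_{s,l}$ afterward. Then $e^{-g}-e^{-\tilde g_{s,l}}$ and $t^2-l_1^2$ have the same sign throughout, so
\[
b(g)-b(\tilde g_{s,l})=\int_0^\infty (t^2-l_1^2)\bigl(e^{-g(t)}-e^{-\tilde g_{s,l}(t)}\bigr)\,dt\ge 0,
\]
giving $d(\tilde g_{s,l})\le d(g)$ — note the sign is opposite to that in operation A. By the same continuity argument as in Theorem \ref{convex-general} (letting $l(s)$ be the smallest admissible $l\ge s$), there is $s_0>0$ with $l(s_0)=s_0$; for this degenerate pair the middle interval is empty, so $\tilde g:=\tilde g_{s_0,s_0}$ has $\tilde g''=M_1$ on $[0,s_0)$, $\tilde g''=M_2$ on $[s_0,\infty)$, $\tilde g(0)=\tilde g'(0)=0$, and $d(\tilde g)\le d(g)$.

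Finally I would set $c=s_0\sqrt{M_1}$ and $f(y)=\tilde g(y/\sqrt{M_1})$. Dilation-invariance gives $d(f)=d(\tilde g)\le d(g)$, and since $f''(y)=M_1^{-1}\tilde g''(y/\sqrt{M_1})$ we get $f''=1$ on $[0,c)$ and $f''=m$ on $(c,\infty)$ with $f(0)=f'(0)=0$; integrating yields $f(y)=y^2/2$ for $y\le c$, and using $f(c)=c^2/2$, $f'(c)=c$ one gets $f(y)=\tfrac{m}{2}y^2+(1-m)cy+\tfrac{m}{2}c^2-\tfrac12 c^2$ for $y>c$, which is the stated function. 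The only points that genuinely need care are bookkeeping ones: checking that operation B flips the inequality relative to operation A (this is the ``$l_1$-crossing'' sign computation above), and verifying that the rescaling by $\sqrt{M_1}$ reproduces the exact coefficients in the statement; everything else is a transcription of the already-proved Theorem \ref{convex-general}.
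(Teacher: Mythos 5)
Your proposal is correct and is essentially the paper's own argument: the paper obtains Lemma \ref{lem-surgery2} precisely ``by dilation'' from the operation-B (lower-bound) half of the proof of Theorem \ref{convex-general}, which is what you carry out, with the sign check at the crossing point $l_1$ and the final rescaling by $\sqrt{M_1}$ filled in correctly.
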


	Now for fixed $m>0$, we consider the function 
	$$d_m(c)=\frac{\int_0^\infty y^2e^{-g_c(y)}dy}{(\int_0^\infty e^{-g_c(y)}dy)^3},$$
	where $g_c(y)$ satisfies:
	\begin{itemize}
		\item $g_c(y)=\frac{y^2}{2}$ for $y\leq c$;
		\item $g_c(y)=\frac{my^2}{2}+(1-m)cy+\frac{mc^2}{2}-\frac{c^2}{2}$ for $y>c$.
	\end{itemize} 
	
	 And clearly $d_m(0)=\frac{2}{\pi}$ and $\lim_{c\to\infty}d_m(c)=\frac{2}{\pi}$.
	So both the maximum and minimum of $d_m(c)$ in $[0,\infty]$ exist. 
	\begin{remark}
		Later on, when we need to emphasize the fact that $g_c(y)$ also depends on $m$, we will denote it by $g_{m,c}(y)$
	\end{remark}

	Let $$a(c)=\int_0^\infty e^{-g_c(y)}dy$$ and $$b(c)=\int_0^\infty y^2e^{-g_c(y)}dy.$$ Then since
	$$\frac{\partial}{\partial c}g_c(y)=(1-m)(y-c),$$ we have
	$$a'(c)=e^{\frac{1}{2}c^2-\frac{1}{2}mc^2}\int_c^{\infty}(m-1)(x-c)e^{-\frac{1}{2}mx^2+(m-1)cx}dx$$
	and $$b'(c)=e^{\frac{1}{2}c^2-\frac{1}{2}mc^2}\int_c^{\infty}(m-1)(x-c)x^2e^{-\frac{1}{2}mx^2+(m-1)cx}dx.$$
	Denote  $$\gamma(c)=a(c)b'(c)-3b(c)a'(c),$$ then 
 $d'(c)=\frac{\gamma(c)}{a(c)^4}$, and

	\begin{eqnarray*}
		\gamma(0)&=&(m-1)[\int_0^\infty e^{-\frac{1}{2}mx^2}dx  \int_0^\infty x^3e^{-\frac{1}{2}mx^2}dx\\ &&-3\int_0^\infty x^2e^{-\frac{1}{2}mx^2}dx\int_0^\infty xe^{-\frac{1}{2}mx^2}dx]\\
		&=&(m-1)[\sqrt{\frac{\pi}{2}}\frac{1}{\sqrt{m}}\cdot 2\frac{1}{m^2}-3\sqrt{\frac{\pi}{2}}\frac{1}{m^{3/2}}\cdot \frac{1}{m} ]\\
		&=&(1-m)\sqrt{\frac{\pi}{2}}m^{-5/2}
	\end{eqnarray*}
	So $d'(0)>1$ when $m<1$, and $d'(0)<1$ when $m>1$. 
	
	By Theorem \ref{convex-general}, we have  $\frac{b(c)}{a^3(c)}<\frac12\cdot 4=2$. Since $$a(c)\leq \sqrt{\frac{\pi}{2}} \max \{1,\sqrt{\frac{1}{m}} \},$$ 
	we get that $$\frac{b(c)}{a(c)}<\pi\max \{1,\frac{1}{m} \}.$$ On the other
	hand clearly we have $|\frac{b'(c)}{a'(c)}|\geq c^2$. Therefore, we have 
	\begin{itemize}
		\item when $m<1$, $d'(c)<0$ for $c$ large enough;
		\item when $m>1$, $d'(c)>0$ for $c$ large enough.
	\end{itemize}
	So when $m<1$, $d_m(c)$ attains its maximum for some $c>0$, and when $m>1$, $d_m(c)$ attains its minimum for some $c>0$.
	For $m\geq 1$, we denote $$p(m)=\frac{1}{4}\max_{c\geq 0}d_{1/m}(c),$$ and $$q(m)=\frac{1}{4}\min_{c\geq 0}d_{m}(c),$$ $$F(m)=(\frac{p(m)}{q(m)})^2.$$ Clearly,  $\lim_{m\to 1}F(m)=1$.  The rest of this subsection is devoted to the proof of the following quantitative estimate
	\begin{prop}\label{p:F derivative estimate}
		For $m\in [1, 1.01)$ we have 
	$$0<F'(m)<0.84.$$
	\end{prop}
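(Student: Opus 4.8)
The plan is to make the dependence of $p(m)$ and $q(m)$ on $m$ completely explicit by differentiating the optimal values with respect to $m$, and then to bound $F'(m) = 2\frac{p(m)}{q(m)}\cdot\frac{p'(m)q(m)-p(m)q'(m)}{q(m)^2}$ near $m=1$ by controlling each of the four quantities $p(1), q(1), p'(1^+), q'(1^+)$ together with continuity on $[1,1.01)$. First I would record that at $m=1$ the function $g_{m,c}$ degenerates: $g_{1,c}(y)=y^2/2$ for all $c$, so $d_1(c)\equiv 2/\pi$, giving $p(1)=q(1)=\frac1{4}\cdot\frac2\pi=\frac1{2\pi}$ and hence $F(1)=1$. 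The sign statement $F'(m)>0$ for $m>1$ is the easy half: by Lemma \ref{lem-surgery} and Lemma \ref{lem-surgery2} (applied with the roles of $M_1,M_2$ swapped) one sees $d_{1/m}(c)$ increasing past $1$ and $d_m(c)$ decreasing below $1$ as $m$ grows, because a surgery with a more extreme ratio can only push $d$ further from the Gaussian value; more carefully, an envelope-type argument shows $p$ is increasing and $q$ is decreasing in $m$, so $F=(p/q)^2$ is increasing.

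The substance is the upper bound $F'(m)<0.84$, for which I would use the envelope theorem. Since $p(m)=\frac14 d_{1/m}(c_*(m))$ at a critical point $c_*(m)$ of $d_{1/m}(\cdot)$, the derivative $\frac{d}{dm}d_{1/m}(c_*(m))$ equals the partial derivative $\frac{\partial}{\partial m}d_{1/m}(c)\big|_{c=c_*(m)}$ — the $c_*'(m)$ term drops out because $\partial_c d = 0$ there. So I need $\frac{\partial}{\partial m}d_\mu(c)$, where the $\mu$-dependence of $g_{\mu,c}$ enters through $\partial_\mu g_{\mu,c}(y) = \frac12 (y-c)^2$ for $y>c$ and $0$ for $y\le c$; this yields clean integral formulas
\begin{equation*}
\partial_\mu a(c) = -\tfrac12\!\int_c^\infty (y-c)^2 e^{-g_{\mu,c}(y)}\,dy,\qquad
\partial_\mu b(c) = -\tfrac12\!\int_c^\infty (y-c)^2 y^2 e^{-g_{\mu,c}(y)}\,dy,
\end{equation*}
and similarly $\partial_\mu\big(\frac{d}{dc}\big)$-quantities if I want to locate $c_*$. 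Combining with the already-computed $a'(c),b'(c)$ (the $c$-derivatives) and with $\gamma(0)=(1-\mu)\sqrt{\pi/2}\,\mu^{-5/2}$, I can express $p'(m)$ and $q'(m)$ at $m=1$ in closed form: near $\mu=1$ the maximizing (resp. minimizing) $c$ tends to the critical point of $d_\mu$ closest to where $d'=0$, and since $d'_\mu(0)\ne 1$ only to first order in $(\mu-1)$, one gets $p'(1^+)$ and $q'(1^+)$ as explicit rational multiples of powers of $\pi$ and $\sqrt{2}$. Plugging these into $F'(1) = 2(p'(1)-q'(1))/p(1)$ (using $p(1)=q(1)$) gives an exact number which I expect to be comfortably below $0.84$; then a crude Lipschitz/monotonicity estimate of $p',q'$ on the short interval $[1,1.01)$ — using the uniform bounds $b(c)/a(c)<\pi\max\{1,1/\mu\}$ and $|b'(c)/a'(c)|\ge c^2$ already established, to keep $c_*(m)$ in a bounded range — upgrades $F'(1)<0.84$ to $F'(m)<0.84$ on all of $[1,1.01)$.

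The main obstacle is the envelope step combined with the need for a fully explicit, rigorous handle on the optimizer $c_*(m)$: a priori $d_{1/m}(c)$ could have several critical points, and one must show that for $m$ in this small window the global extremum is attained at a unique, well-separated $c_*(m)$ that varies smoothly (or at least Lipschitz-continuously) with $m$, so that the envelope theorem applies and the remainder over $[1,1.01)$ is controllable. I would handle this by a second-derivative computation of $d_\mu(c)$ at the critical point — showing $d''<0$ there for the max (resp. $>0$ for the min) uniformly for $m$ near $1$ — which pins down $c_*$ and gives the needed continuity, after which the rest is the explicit (if tedious) evaluation of Gaussian and incomplete-Gaussian integrals. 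Everything else — the sign $F'>0$, and $F'<0.84$ — then follows by assembling these explicit bounds.
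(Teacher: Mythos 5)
Your plan follows essentially the same route as the paper: identify the critical point of $d_m(\cdot)$ via the equation obtained from $\gamma(c)=a(c)b'(c)-3b(c)a'(c)=0$ (the paper's $G(m,c)=0$, with the factor $(m-1)e^{\frac12c^2-\frac12mc^2}$ divided out so the equation survives the degeneration at $m=1$), show by the implicit function theorem that it has a unique solution $c(m)$ near $c_0$ for $m$ near $1$, use the envelope cancellation (the $c'(m)$ terms drop because $\gamma(c(m))=0$) together with $\partial_m a=-\frac12\int_c^\infty(x-c)^2e^{-g_c}$, $\partial_m b=-\frac12\int_c^\infty(x-c)^2x^2e^{-g_c}$ to compute $p'$ and $q'$, evaluate at $m=1$ where $p(1)=q(1)=\frac1{2\pi}$, and then extend over $[1,1.01)$ by controlling how everything moves with $m$. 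That is exactly the paper's argument, so conceptually you have the right proof.

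There are, however, two concrete inaccuracies in your execution plan. First, $p'(1)$ and $q'(1)$ are \emph{not} expressible as ``rational multiples of powers of $\pi$ and $\sqrt2$'': the limiting critical point $c_0$ is defined only implicitly as the root of $\int_c^\infty(x-c)(x^2-3)e^{-x^2/2}dx=0$ (equivalently an equation in $\erfc$), numerically $c_0\approx0.612003$, and the quantity $\eta(1)=\frac12\int_0^\infty e^{-g_{c_0}}\int_{c_0}^\infty(x-c_0)^2x^2e^{-g_{c_0}}-\frac32\int_0^\infty x^2e^{-g_{c_0}}\int_{c_0}^\infty(x-c_0)^2e^{-g_{c_0}}\approx-0.318$ has no closed form; the paper evaluates these numerically (Mathematica) with explicit error control. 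Second, the margin is much thinner than you anticipate: $F'(1)\approx0.81$, so $0.84$ is not ``comfortably'' above it, and the later iteration argument only works because $I'(m)<0.86<1$; consequently the extension from $m=1$ to $[1,1.01)$ cannot be a crude Lipschitz afterthought. The paper does it by pinning $c(m)$ within $|c-c_0|<0.021$ via two-sided bounds $1.05<c'(m)<2.02$ (from explicit bounds on $\partial_cG$ and $\partial_mG$ on the rectangle), then bounding $|\frac{d}{dm}\eta|<4$, $1.251<a(c)<\sqrt{\pi/2}$, $4q(m)>0.635$, $4p(m)<0.638$, and assembling these into $-0.132<4q'(m)<-0.127$ and the analogous bound for $p'$. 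Also note your ``easy half'' monotonicity argument only yields $F'\ge0$; strict positivity comes from the same explicit derivative bounds. So the skeleton is right, but the quantitative verification that constitutes the actual content of the proposition still has to be carried out along the paper's lines.
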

Let $$G(m,c)=a(c)\int_c^\infty (x-c)x^2e^{-g_c(x)}dx-3b(c)\int_c^\infty (x-c)e^{-g_c(x)}dx,$$ then 
	$G(1,c)=\sqrt{\frac{\pi}{2}}\int_c^\infty (x-c)(x^2-3)e^{-x^2/2}dx$. It is not hard to see that 
	$G(1,c)=0$ has exactly one solution $c=c_0$ for $c\geq 0$, and we can use Mathematica to estimate
	$c_0\approx 0.612003$. 
	Also, we have:
	\begin{eqnarray*}
		\frac{\partial}{\partial c}G&=&3b(c)\int_c^\infty e^{-g_c}dx-a(c)\int_c^\infty x^2e^{-g_c}dx\\
		&+&(1-m)[\int_c^\infty (x-c)e^{-g_c}dx+a(c)\int_c^\infty (x-c)^2x^2 e^{-g_c}dx\\
		&-&3\int_c^\infty (x-c)x^2 e^{-g_c}dx\int_c^\infty (x-c) e^{-g_c}dx-3b(c)\int_c^\infty(x-c)^2 e^{-g_c}dx)]\\
		&=&A_1(c)+(1-m)A_2(c).
	\end{eqnarray*}
	Of course, $A_1(c)$ and $A_2(c)$ also depend on $m$. 
	Then $$\frac{\partial}{\partial c}G(1,c_0)\approx 1.06 $$
	So by the implicit function theorem, the equation $G(m,c)=0$ has exactly one solution $c(m)$ close to $c_0$ for $m$ in a neighborhood of $1$.
	Also, since
	$$\frac{\partial}{\partial m}g_c(y)=\frac12(y-c)^2,$$
	we have
	$$\frac{\partial}{\partial m}a(c)=-\frac12 \int_c^\infty (x-c)^2 e^{-g_c}dx,$$
	and $$\frac{\partial}{\partial m}b(c)=-\frac12 \int_c^\infty (x-c)^2x^2 e^{-g_c}dx.$$
	Therefore,
	\begin{eqnarray*}
		\frac{\partial}{\partial m}G&=&-\frac12 \int_c^\infty (x-c)^2 e^{-g_c}dx\int_c^\infty (x-c)x^2 e^{-g_c}dx
		-\frac12 a(c)\int_c^\infty  (x-c)^3e^{-g_c}dx\\
		\\ &+&\frac32 \int_c^\infty (x-c)^2x^2e^{-g_c}dx\int_c^\infty (x-c) e^{-g_c}dx+\frac32 b(c)\int_c^\infty(x-c)^3 e^{-g_c}dx
	\end{eqnarray*}
	We can use Mathematica again to estimate $\frac{\partial}{\partial m}G(1,c_0)\approx 1.557$. 
	So $\frac{d}{dm}c(1)\approx 1.47$.
	
	When $1\leq m<1.01$ and $c_0-0.03\leq c\leq c_0+0.03$,
	we have $$\frac{\partial}{\partial m}\int_c^\infty e^{-g_c}dx<0,$$ $$\frac{\partial}{\partial m}\int_c^\infty x^2e^{-g_c}dx<0,$$
	$$\frac{\partial}{\partial c}\int_c^\infty e^{-g_c}dx<0,$$ and $$\frac{\partial}{\partial c}\int_c^\infty x^2e^{-g_c}dx<0.$$ Therefore, we can estimate 
	$$0.932<A_1(c)<1.182.$$
	Similarly, we can estimate $|A_2(c)|<3.36$. Hence we have 
	$$0.89<\frac{\partial}{\partial c}G(m,c)<1.22,  $$
	for $1\leq m<1.01$ and $c_0-0.03\leq c\leq c_0+0.03$.
	In the same way, we can estimate 
	$$1.29<\frac{\partial}{\partial m}G(m,c)<1.792  $$
	So for $1\leq m<1.01$ the equation $G(m,c)=0$ has exactly one solution $c(m)$ satisfying $$1.05<c'(m)<2.02.$$
	
	When $0.99< m\leq 1$, we have similar conclusions. More precisely, we have
	$0.943<A_1(c)<1.197$ and
	$|A_2(c)|<4.21$. So we have 
	$$0.9<\frac{\partial}{\partial c}G(m,c)<1.24.  $$
	Also,
	$$1.32<\frac{\partial}{\partial m}G(m,c)<1.84  .$$
	Therefore 
	$$1.06<c'(m)<2.05 $$

	Now the argument above, we have for $1\leq m<1.01$,
	$q(m)=\frac{1}{4}d_m(c(m))$ and $p(m)=\frac{1}{4}d_{1/m}(c(\frac{1}{m}))$. So 
	$$q'(m)=\frac{1}{4(a(c(m)))^4}[\frac{d}{dm}b(c(m))a(c(m))-3\frac{d}{dm}a(c(m))b(c(m))].$$
	Let $\eta= \frac{d}{dm}b(c(m))a(c(m))-3\frac{d}{dm}a(c(m))b(c(m))$.
	We have
	\begin{eqnarray*}
		-\eta&=&\int_c^\infty \frac{1}{2}(x-c)^2x^2 e^{-g_c}dx a(c(m))-c'(m)b'(c)a(c)\\
		&&-3b(c)\int_c^\infty \frac12(x-c)^2 e^{-g_c}dx+3b(c)c'(m)a'(c)\\
		&=&\frac{1}{2}\int_0^\infty  e^{-g_c}dx\int_c^\infty (x-c)^2x^2 e^{-g_c}dx\\ &&-\frac{3}{2}\int_0^\infty x^2 e^{-g_c}dx\int_c^\infty (x-c)^2 e^{-g_c}dx.
	\end{eqnarray*}
	Then $\eta(1)\approx -0.318018$. 
	Since $\lim_{m\to 1}a(c(m))=\sqrt{\frac{\pi}{2}}$ and  $$\lim_{m\to 1}q(m)=\lim_{m\to 1}p(m)=\frac{1}{2\pi},$$ we get 
	$$F'(1)\approx 0.81 $$
	
	If $1\leq m<1.01$ and $|c-c_0|<0.021$, we can estimate $|\frac{d}{dm}\eta(c(m))|<4$, so we have $-0.314>\eta>-0.323$.
	Since $\frac{d}{dm}a(c(m))<0$, we have $1.251<a(c)<\sqrt{\frac{\pi }{2}}$, so 
	$$-0.132<4q'(m)<-0.127. $$
	We can also estimate $4q(m)>0.635$ and $4p(m)<0.638$, so 
	$$\frac{p(m)}{q(m)}<1.005$$

	Similarly if $0.99<m<1$ and $|c-c_0|<0.021$, since
	$\sqrt{\frac{\pi }{2}}<a(c)<1.2554$, we have
	$$-0.131<4p'(1/m)<-0.126.$$
		Therefore, 
	we have when $m\in [1, 1.01]$, 
	$$0<F'(m)<0.84.$$ This finishes the proof of Proposition \ref{p:F derivative estimate}.
	
	\subsection{full real line}	
	\begin{definition}
		We say that a function $g(t)$ is a $(M_1,M_2)$-admissible function if $g(t)$ is a convex function on $\R$ such that $g(t)$ and $g(-t)$, $t\geq 0$, satisfy the conditions in theorem \ref{convex-general} with constants $M_1,M_2$.
	\end{definition}
	  Let $g(t)$ be a $(M_1,M_2)$-admissible function, in order to control $\frac{d^2}{da^2}\lambda(a)$ and $\frac{d^2}{dt^2}\log f(x)$, we actually need to consider the following functions.
	Let $$\bar{t}(g)=\frac{\int_{-\infty}^{\infty }te^{-g(t)}dt}{\int_{-\infty}^{\infty }e^{-g(t)}dt},$$
	be the center of mass. We define the following functions of $g$: $$\tilde{a}(g)=\int_{-\infty}^{\infty }e^{-g(t)}dt,$$
	$$\tilde{b}(g)=\int_{-\infty}^{\infty }(t-\bar{t})^2e^{-g(t)}dt,$$
	and 
	$$\tilde{d}(g)=\frac{\tilde{b}(g)}{(\tilde{a}(g))^3}. $$
	\begin{remark}
		Notice that $\tilde{d}(g)$ does not change if we dilate $t$, namely $\tilde{d}(g(t))=\tilde{d}(g(\lambda t))$. 
	\end{remark}
	
	When $\tilde{a}(g)=1$, we have a simpler formula $$\tilde{d}(g)=\tilde{b}(g)=\int_{-\infty}^{\infty }t^2e^{-g(t)}dt-\bar{t}^2(g).$$

	Recall we have defined the function $g_{m,c}(y)$ right after lemma \ref{lem-surgery2}. 
	\begin{definition}
		For fixed $c_1\geq 0, c_2\geq 0$, we denote by $g_{m,c_1,c_2}(y)$ the function satisfying the following:
		\begin{itemize}
			\item $g_{m,c_1,c_2}(y)=g_{m,c_1}(y)$ for $y\geq 0$;
			\item $g_{m,c_1,c_2}(y)=g_{m,c_2}(-y)$ for $y<0$.
		\end{itemize}
	\end{definition}
We will need to apply operations that replace $g_{m,c_1,c_2}$ with $g_{m,c_1-\epsilon,c_2+\delta}$. We will make sure that the total mass $\tilde{a}(g)$ does not change. Then in order to show that we change $\frac{\int_{-\infty }^{\infty}t^2e^{-g}dt}{(\int_{-\infty }^{\infty}e^{-g}dt)^3}$ in a direction we need, we only
need to consider the ratio of the differences of the quantities $a(g)$ and $b(g)$ when we replace $g_{m,c}$ with $g_{m,c+\epsilon}$. And we can consider it from the infinitesimal point of view. 
It is not hard to see that 
\begin{eqnarray*}
	\frac{d}{dc}a(g_{m,c})&=&(m-1)e^{\frac12 c^2-\frac12 mc^2}\int_{c}^{\infty}(x-c)e^{-\frac{1}{2}mx^2+(m-1)cx}dx \\
	&=&(m-1)e^{-\frac12 c^2}\int_{0}^{\infty}xe^{-\frac{1}{2}mx^2-cx}dx\\
	&=&(m-1)m^{-1}e^{-\frac12 c^2}\int_{0}^{\infty}xe^{-\frac{1}{2}x^2-\sqrt{m}cx}dx
\end{eqnarray*}
Similarly
\begin{eqnarray*}
	\frac{d}{dc}b(g_{m,c})&=&(m-1)e^{\frac12 c^2-\frac12 mc^2}\int_{c}^{\infty}x^2(x-c)e^{-\frac{1}{2}mx^2+(m-1)cx}dx \\
	&=&(m-1)e^{-\frac12 c^2}\int_{0}^{\infty}(x+c)^2xe^{-\frac{1}{2}mx^2-cx}dx\\
	&=&(m-1)m^{-2}e^{-\frac12 c^2}\int_{0}^{\infty}(x+\sqrt{m}c)^2xe^{-\frac{1}{2}x^2-\sqrt{m}cx}dx
\end{eqnarray*}
So we need to consider the ratio function 
$$f_1(c)=\frac{\int_0^\infty x(x+c)^2e^{-x^2/2-cx} dx}{\int_0^\infty xe^{-x^2/2-cx} dx}, \quad c\geq 0.$$
Recall the complementary error function $\erfc(x)=\frac{2}{\sqrt{\pi}}\int_{x}^{\infty}e^{-t^2}dt$ 
Then we can calculate the derivative:
$$f_1'(c)=\frac{2 \sqrt{2 \pi } \left(c^2+1\right) e^{\frac{c^2}{2}} \text{erfc}\left(\frac{c}{\sqrt{2}}\right)-4 c}{\left(\sqrt{2 \pi } c e^{\frac{c^2}{2}} \text{erfc}\left(\frac{c}{\sqrt{2}}\right)-2\right)^2}$$

Since $\erfc$ satisfies the inequality 
\begin{equation}\label{e-erfc}
	\sqrt{\pi }e^{x^2}\erfc(x)>\frac{2}{x+\sqrt{x^2+2}}.
\end{equation}
It follows that 
\begin{equation}\label{e-f1c}
	f_1'(c)>0,
\end{equation}
for $c\geq 0$.

Similarly we can consider $$f_2(c)=\frac{\int_0^\infty x(x+c)e^{-x^2/2-cx} dx}{\int_0^\infty xe^{-x^2/2-cx} dx}.$$

We then have $$f_2'(\sqrt{2}c)=\frac{2[\pi e^{2c^2}\erfc^2(c)+2\sqrt{\pi}ce^{c^2}\erfc(c)-2]}{(2-2\sqrt{\pi}ce^{c^2}\erfc(c))^2}.$$
The numerator can be considered as a quadratic function $x^2+2cx-2$ of $x=\sqrt{\pi}e^{c^2}\erfc(c)$. And the larger root of the quadratic function is $-c+\sqrt{c^2+2}$. Then by formula \ref{e-erfc} again, we have 
\begin{equation}\label{e-f2c}
f_2'(c)>0,
\end{equation}
for $c\geq 0$, which implies that the function $f_3(c)=\frac{b_1(c)}{a(c)}$, where $b_1(c)=\int_{0}^{\infty}xe^{-g_{m,c}(x)}dx$ satisfies the following.
\begin{lem}\label{lem-f3c}
When $m<1$, we have 
\begin{equation}\label{e-f3c}
f_3'(c)<0,
\end{equation}
for $c\geq 0$.
\end{lem}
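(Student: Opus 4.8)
The plan is to prove $f_3'(c)<0$ directly, by reducing the sign of the numerator of $f_3'$ to a manifestly positive double integral, rather than extracting it from the monotonicity of $f_1,f_2$ (the underlying mechanism is, however, the same).

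\emph{Step 1: differentiate and repackage.} First I would record the derivatives of $a$ and $b_1$. Since $g_{m,c}(x)$ is $C^1$ jointly in $(x,c)$ with $\frac{\partial}{\partial c}g_{m,c}(x)=(1-m)(x-c)_+$ (this is continuous across $x=c$, where it vanishes), and since $e^{-g_{m,c}(x)}$ has Gaussian-type decay, I can differentiate under the integral sign to obtain
\[
a'(c)=(m-1)\int_c^\infty (x-c)\,e^{-g_{m,c}(x)}\,dx,\qquad b_1'(c)=(m-1)\int_c^\infty x(x-c)\,e^{-g_{m,c}(x)}\,dx .
\]
Since $a(c)^2 f_3'(c)=b_1'(c)a(c)-b_1(c)a'(c)$, and writing each product of integrals as a double integral over $(y,x)\in[0,\infty)\times[c,\infty)$, I get
\[
a(c)^2 f_3'(c)=(m-1)\int_0^\infty\!\!\int_c^\infty (x-y)(x-c)\,e^{-g_{m,c}(x)-g_{m,c}(y)}\,dx\,dy .
\]

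\emph{Step 2: positivity of the double integral.} Next I would show the double integral is strictly positive. Split the outer ($y$-)integral at $c$. On $\{\,0\le y\le c,\ x\ge c\,\}$ one has $x-y\ge 0$ and $x-c\ge 0$, so that contribution is $\ge 0$. On $\{\,y\ge c,\ x\ge c\,\}$ I symmetrize by averaging the integrand with its $x\leftrightarrow y$ swap:
\[
\int_c^\infty\!\!\int_c^\infty (x-y)(x-c)\,e^{-g_{m,c}(x)-g_{m,c}(y)}\,dx\,dy=\tfrac12\int_c^\infty\!\!\int_c^\infty (x-y)^2\,e^{-g_{m,c}(x)-g_{m,c}(y)}\,dx\,dy>0 ,
\]
since $e^{-g_{m,c}}>0$ everywhere. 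Hence the double integral is $>0$; as $m<1$ gives $m-1<0$, this yields $a(c)^2 f_3'(c)<0$, i.e.\ $f_3'(c)<0$ for all $c\ge 0$. The value $c=0$ needs no separate argument, since then the first region is empty and the second is still strictly positive.

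\emph{Step 3: remarks.} The computation is short, and the only place any thought is needed is the symmetrization on $\{x,y\ge c\}$ together with checking strict (not merely weak) positivity, including the edge case $c=0$; I do not expect a genuine obstacle here. Conceptually the lemma just says that reweighting the probability measure $\mu_c\propto e^{-g_{m,c}(x)}\,dx$ by the nondecreasing, nonconstant function $(x-c)_+$ strictly increases the mean of $x$; equivalently $f_3'(c)=-\operatorname{Cov}_{\mu_c}\!\big(x,\,(1-m)(x-c)_+\big)<0$, the sign coming from the standard positive correlation of two nondecreasing functions of a single random variable. This is the same mechanism at work in the $\erfc$-based positivity facts \eqref{e-f1c} and \eqref{e-f2c} just proved, so I would present whichever of the two formulations reads most cleanly in context.
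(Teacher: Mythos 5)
Your proof is correct, and it takes a genuinely more direct route than the paper. The paper reaches the same sign statement by writing $f_3'(c)=\frac{a'(c)}{a(c)}\bigl[\frac{b_1'(c)}{a'(c)}-\frac{b_1(c)}{a(c)}\bigr]=\frac{a'(c)}{a(c)}\bigl[\frac{1}{\sqrt{m}}f_2(\sqrt{m}c)-f_3(c)\bigr]$, checking the sign of $f_3'(0)$, and then invoking the previously established monotonicity $f_2'(c)>0$ (which itself rests on the $\erfc$ inequality \eqref{e-erfc}) to assert $\frac{b_1'(c)}{a'(c)}>\frac{b_1(c)}{a(c)}$; since $a'(c)<0$ for $m<1$ this gives the claim, though the step from \eqref{e-f2c} to the displayed inequality is left terse (it is essentially a first-crossing/comparison argument in $c$). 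You instead prove the key inequality $b_1'(c)a(c)-b_1(c)a'(c)<0$ pointwise, by expressing it as $(m-1)$ times a double integral and splitting/symmetrizing it into a manifestly nonnegative piece on $\{0\le y\le c\}$ plus $\tfrac12\iint_{[c,\infty)^2}(x-y)^2e^{-g_{m,c}(x)-g_{m,c}(y)}\,dx\,dy>0$; your formulas for $a'(c)$ and $b_1'(c)$ agree with the paper's, and the differentiation under the integral sign is legitimately justified by the continuity of $(1-m)(x-c)_+$ and the Gaussian-type decay. What your route buys: it is self-contained (no $f_2$, no $\erfc$ bound, no comparison-in-$c$ argument), it makes rigorous exactly the covariance/Chebyshev mechanism the paper gestures at, it handles all $c\ge 0$ including $c=0$ uniformly, and it shows as a byproduct that the inequality reverses for $m>1$. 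What the paper's route buys is economy of means within its own framework, reusing the functions $f_1,f_2$ and the $\erfc$ estimate already set up for the operations used later in Theorem \ref{thm-dtilde}. Either argument suffices for the lemma.
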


\begin{proof}
 Then $$f_3'(c)=\frac{a(c)b_1'(c)-b_1(c)a'(c)}{a^2(c)}.$$
 We have
\begin{eqnarray*}
	b_1'(c)=(m-1)m^{-3/2}e^{-\frac12 c^2}\int_{0}^{\infty}(x+\sqrt{m}c)xe^{-\frac{1}{2}x^2-\sqrt{m}cx}dx,
\end{eqnarray*}
and $$a'(c)=(m-1)m^{-1}e^{-\frac12 c^2}\int_{0}^{\infty}xe^{-\frac{1}{2}x^2-\sqrt{m}cx}dx.$$
So at $0$, we have $a(0)=\sqrt{\frac{\pi}{2m}}$, $b_1(0)=\sqrt{\frac{\pi}{2}}\frac{1}{m}$, $a'(0)=(m-1)/m$ and $b_1'(0)=\sqrt{\frac{\pi}{2}}\frac{m-1}{m^{3/2}}$. Therefore $$f_3'(0)\begin{cases}
	<0, &m<1\\
	>0, &m>1
\end{cases}.$$

Write \begin{eqnarray*}
	f_3'(c)&=&\frac{a'(c)}{a(c)}[\frac{b_1'(c)}{a'(c)}-\frac{b_1(c)}{a(c)}]\\
	&=&\frac{a'(c)}{a(c)}[\frac{1}{\sqrt{m}}f_2(\sqrt{m}c)-f_3(c)]
\end{eqnarray*}
Then formula \ref{e-f2c} implies that we always have $\frac{b_1'(c)}{a'(c)}>\frac{b_1(c)}{a(c)}$, so $f_3'(c)<0$. 
\end{proof}
Then we can control $\bar{t}$ with the following proposition:
	\begin{prop}\label{prop-tbar}
		Let $g(t)$ be a $(M_1,M_2)$-admissible function. We have 
		$$|\bar{t}(g)|\leq \sqrt{\frac{2}{\pi M_1}}(1-\frac{1}{\sqrt{m}}),$$
		where $m=\frac{M_2}{M_1}$.
	\end{prop}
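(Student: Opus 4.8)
\emph{Plan of proof.} The bound is sharp, attained (in the limit) by the broken Gaussian $g^{*}(t)=\tfrac{M_{1}}{2}t^{2}$ for $t\ge 0$ and $g^{*}(t)=\tfrac{M_{2}}{2}t^{2}$ for $t<0$, for which a direct computation gives $\bar t(g^{*})=\sqrt{2/(\pi M_{1})}\,(1-1/\sqrt m)$; the plan is to show $g^{*}$ is the worst case by two monotone surgeries, the last controlled by Lemma \ref{lem-f3c}. First I would reduce to a normalised situation: since $(M_{1},M_{2})$-admissibility is symmetric under $t\mapsto -t$ while $\bar t(g(-\cdot))=-\bar t(g)$, it suffices to bound $\bar t(g)$ from above assuming $\bar t(g)\ge 0$; and since $\tilde g(t)=g(t/\sqrt{M_{1}})$ is $(1,m)$-admissible with $\bar t(\tilde g)=\sqrt{M_{1}}\,\bar t(g)$, I may take $M_{1}=1$, $M_{2}=m$, so the goal becomes $\bar t(g)\le\sqrt{2/\pi}\,(1-1/\sqrt m)$. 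Next I would fix the left half: writing $\bar t(g)=(R-S)/(P+Q)$ with $R=\int_{0}^{\infty}te^{-g}\,dt$, $P=\int_{0}^{\infty}e^{-g}\,dt$ and $S=-\int_{-\infty}^{0}te^{-g}\,dt$, $Q=\int_{-\infty}^{0}e^{-g}\,dt$, the conditions of Theorem \ref{convex-general} applied to $h(\tau)=g(-\tau)$ give $h(\tau)\le\tfrac m2\tau^{2}$, so $e^{-h}\ge e^{-m\tau^{2}/2}$ and both $S$ and $Q$ are minimised simultaneously by replacing the left half with $\tfrac m2 t^{2}$; since $\partial\bar t/\partial S<0$ and (using $\bar t\ge 0$) $\partial\bar t/\partial Q<0$, this does not decrease $\bar t$. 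Henceforth $Q=\sqrt{\pi/(2m)}=:Q^{*}$ and $S=1/m=:S^{*}$ are fixed and only the right half $g_{r}=g|_{[0,\infty)}$ is free.

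The core step is a surgery on $g_{r}$, in the spirit of operation A from the proof of Theorem \ref{convex-general}. For $c\ge 0$ let $g_{r}^{(c)}$ be the function equal to $\tfrac m2 t^{2}$ on $[0,c]$ and continued $C^{1}$ with $g''=1$ on $[c,\infty)$. As $c$ ranges over $[0,\infty]$ the mass $\int_{0}^{\infty}e^{-g_{r}^{(c)}}\,dt$ decreases continuously from $\sqrt{\pi/2}$ to $\sqrt{\pi/(2m)}$, a range that contains $P$ because $\tfrac12 t^{2}\le g_{r}(t)\le\tfrac m2 t^{2}$; I would pick $c$ making the masses equal. Since $g_{r}^{(c)}$ is more convex than $g_{r}$ on $[0,c]$ and less convex on $[c,\infty)$, the difference $g_{r}^{(c)}-g_{r}$ is concave on $[c,\infty)$ with nonnegative value and slope at $c$, hence $g_{r}^{(c)}-g_{r}\ge 0$ on an initial interval $[0,\ell]$ and $\le 0$ on $[\ell,\infty)$ — a single sign change. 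Together with $\int_{0}^{\infty}(e^{-g_{r}^{(c)}}-e^{-g_{r}})\,dt=0$ this forces $\int_{0}^{\infty}t(e^{-g_{r}^{(c)}}-e^{-g_{r}})\,dt\ge 0$, i.e.\ $R$ does not decrease while $P$ is unchanged, so $\bar t$ does not decrease. Thus I may assume $g_{r}=g_{r}^{(c)}$ for some $c\ge 0$.

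It remains to optimise over $c$. Rescaling by $\sqrt m$ identifies $g_{r}^{(c)}$ with $g_{\mu,c'}$, $\mu=1/m<1$, $c'=\sqrt m\,c$, so in the notation preceding Lemma \ref{lem-f3c} one has $P=\tfrac1{\sqrt m}a(c')$ and $R=\tfrac1m b_{1}(c')$, whence
$$\bar t=\frac{b_{1}(c')-1}{\sqrt m\,\bigl(a(c')+\sqrt{\pi/2}\,\bigr)}.$$
Its $c'$-derivative has the sign of $a(c')^{2}f_{3}'(c')+\sqrt{\pi/2}\,b_{1}'(c')+a'(c')$; the first summand is negative by Lemma \ref{lem-f3c} (whose hypothesis, read with parameter $\mu<1$, applies), and $a'(c')<0$, $b_{1}'(c')<0$ follow from the explicit formulas for $a'$ and $b_{1}'$, whose prefactor $\mu-1$ is negative. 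Hence $\bar t$ is strictly decreasing in $c'$ and is maximal at $c'=0$, where $a(0)=\sqrt{\pi m/2}$ and $b_{1}(0)=m$ give $\bar t=\sqrt{2/\pi}\,(1-1/\sqrt m)$. Undoing the two reductions proves the claimed inequality.

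I expect the surgery of the second paragraph to be the main obstacle: one must verify both that the comparison function can be chosen with exactly the same right-half mass $P$ and that $g_{r}^{(c)}-g_{r}$ changes sign only once, so that passing to $g_{r}^{(c)}$ genuinely increases the first moment $R$. The remaining work is bookkeeping on top of Lemma \ref{lem-f3c} and the dilation invariance of $\bar t$; a crude two-sided estimate (bounding numerator and denominator separately by the extreme Gaussians) is easily seen to be too weak, which is why the coupled surgery is needed. Since the admissibility inequalities are strict, $g^{*}$ and the $g_{r}^{(c)}$ are limits of admissible functions rather than admissible themselves, which only adds a harmless limiting step.
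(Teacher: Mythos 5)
Your proposal is correct, and it reaches the paper's bound by the same underlying strategy (monotone surgeries driving $g$ toward the split-Gaussian extremal with curvature $M_1$ on the side of the center of mass and $M_2$ on the other side, with Lemma \ref{lem-f3c} as the key input), but the intermediate route is genuinely different and in places more explicit than the paper's. The paper applies the mass-preserving operations A and B of Theorem \ref{convex-general} to each half, lands in the two-parameter family $f_{m,c_1,c_2}$, and then runs a coupled $(c_1,c_2)$-deformation preserving total mass, finishing with a case analysis ($c_1=0$ versus $c_2=0$) in which formula \eqref{e-f3c} is invoked qualitatively. You instead (i) freeze the left half by the pointwise domination $g(-\tau)\le \tfrac{m}{2}\tau^2$, which simultaneously decreases $S$ and $Q$ and hence (using $\bar t\ge 0$) can only increase $\bar t$ --- a simpler step than operation B plus the later case split; (ii) reduce the right half to the one-parameter family $g_{\mu,c}$, $\mu=1/m$, by a mass-matched operation-A surgery, and here you supply the single-crossing/first-moment argument ($\phi=g_r^{(c)}-g_r\ge 0$ on $[0,c]$, concave and initially nonnegative on $[c,\infty)$, hence one sign change, so $\int t\,\psi\,dt=\int(t-\ell)\psi\,dt\ge0$) that the paper only asserts when it says operation A "will increase $\bar t$"; and (iii) replace the paper's qualitative deformation argument by a one-variable computation: $\bar t(c')=\bigl(b_1(c')-1\bigr)/\bigl(\sqrt m\,(a(c')+\sqrt{\pi/2})\bigr)$, whose derivative has the sign of $a^2f_3'+\sqrt{\pi/2}\,b_1'+a'$, each term negative by Lemma \ref{lem-f3c} and the sign of the prefactor $\mu-1$ in the formulas for $a'$, $b_1'$; monotonicity in $c'$ then puts the maximum at $c'=0$, which is exactly the paper's extremal value $\sqrt{2/\pi}\,(1-1/\sqrt m)$ before undoing the dilation. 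The trade-off: the paper's argument stays entirely at the level of soft deformations, while yours requires checking the mass-matching range $\sqrt{\pi/(2m)}\le P\le\sqrt{\pi/2}$ and the derivative identity, but in return it gives a cleaner, fully quantitative endgame and fills in the moment-monotonicity step left implicit in the paper; your closing remark that the comparison functions are only limits of admissible ones is correct and indeed harmless, since they enter only as explicit majorants/minorants.
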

	\begin{proof}
		By a dilation of $t$, we can assume $M_1=1$ and $M_2=m$. Then we show that
		the extremal case is that $g''(t)=1$ for $t\geq 0$ and $g''(t)=m$ for $t<0$.
		To see this, we assume without loss of generality that $\bar{t}(g)>0$. Then we apply operation B on the part where $t<0$ and apply operation A on the part where $t>0$, both of these operations will increase $\bar{t}(g)$. So we arrive at a function $g_1$ satisfying 
		\begin{itemize}
			\item $\int_{-\infty}^{\infty}e^{-g_1(t)}dt=\int_{-\infty}^{\infty}e^{-g(t)}dt$ and $\bar{t}(g)<\bar{t}(g_1)$.
			\item $\exists c_1>0>c_2$ such that $$g_1''(t)=\begin{cases}
				m, &0< t< c_1\\
				1, &t\geq c_1\\
				1, &c_2<t\leq 0\\
				m, &t\leq c_2
			\end{cases}. $$
		\end{itemize}
		We will denote by $f_{m,c_1,c_2}$ a function satisfying the condition in the second item.
		Then we apply the following operation: we replace $g_1=f_{m,c_1,c_2}$ with $g_2=f_{m,c_1-\epsilon,c_2+\delta }$ for some positive $\epsilon,\delta$ satisfying $$\int_{-\infty}^{\infty}e^{-g_1(t)}dt=\int_{-\infty}^{\infty}e^{-g_2(t)}dt.$$
		Clearly, this operation increases $\bar{t}(g)$.

		And we can repeat applying this operation until $c_1=0$ or $c_2=0$. In the second case where $c_2=0$, by formula \ref{e-f3c}, we can increase $\bar{t}(g)$ by replacing the obtained function with the extremal case where $g''(t)=1$ for $t\geq 0$ and $g''(t)=m$ for $t<0$. In the first case where $c_1=0$, recalling that $\bar{t}=\frac{\int_{-\infty}^{\infty}te^{-g(t)}dt}{\int_{-\infty}^{\infty}e^{-g(t)}dt}$, if we increase $c_2$ we simultaneously decrease the denominator and increase the numerator so we also increase $\bar{t}(g)$ by replacing the function with the extremal case.
		And $\bar{t}$ in this case is 
		$$\frac{1-\frac{1}{m}}{\sqrt{\frac{\pi}{2}}(1+\sqrt{\frac{1}{m}})}=\sqrt{\frac{2}{\pi}}(1-\frac{1}{\sqrt{m}}).$$

		By symmetry, we will also have a lower bound. Then notice that a dilation of $t$ makes the same dilation to $\bar{t}$, and we get the conclusion.
	\end{proof}

\begin{theorem}\label{thm-dtilde}
	Let $g(t)$ be a $(M_1,M_2)$-admissible function. Let $m=\frac{M_2}{M_1}$. Then there exist $c_1\geq c_2\geq 0$ such that 
	$$\tilde{d}(g)\geq \tilde{d}(g_{m,c_1,c_2}), $$
	and $$\tilde{d}(g)<p(m). $$
\end{theorem}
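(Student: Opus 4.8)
The plan is to reduce the general $(M_1,M_2)$-admissible $g$ to the two-parameter model family $g_{m,c_1,c_2}$ by a sequence of mass-preserving surgeries, and then to bound $\tilde d$ of the model by the one-sided quantity $p(m)$ from the half-line analysis. First I would normalize: by the dilation invariance of $\tilde d$ (noted in the remark after its definition) and by rescaling $t$, assume $M_1=1$, $M_2=m$, and rescale the amplitude so that $\tilde a(g)=1$; then $\tilde d(g)=\int t^2 e^{-g}\,dt-\bar t(g)^2$. Next, shift $t$ so that $\bar t(g)=0$; this does not change $\tilde d$. Now apply \textbf{operation A} to the half $t>0$ and \textbf{operation B} to the half $t<0$ (in the sense of the proof of Theorem \ref{convex-general} and of Proposition \ref{prop-tbar}), keeping $\tilde a(g)=1$ throughout by choosing the surgery parameters appropriately; each such move replaces $g$ on a half-line by a piecewise-quadratic profile with curvatures among $\{M_1,M_2\}$ and, by the same ``$(t^2-l_1^2)$ sign'' argument used for $b(g)\le b(g_{s,l})$, pushes $\int t^2 e^{-g}$ upward (after re-centering) while leaving the mass fixed. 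Iterating, as in Proposition \ref{prop-tbar}, one reaches a function of the form $g_{m,c_1,c_2}$ with $c_1\ge c_2\ge 0$ (the larger flat-tail parameter on the side where the mass was heavier), and by construction $\tilde d(g)\ge \tilde d(g_{m,c_1,c_2})$. This gives the first inequality.

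For the second inequality I would compare $\tilde d(g_{m,c_1,c_2})$ with the half-line optimizer. Write $\tilde a = a(c_1)+a(c_2)$ and, for the centered second moment,
\[
\tilde b(g_{m,c_1,c_2}) = b(c_1)+b(c_2) - \frac{(b_1(c_2)-b_1(c_1))^2}{a(c_1)+a(c_2)},
\]
where $a,b,b_1$ are the half-line quantities attached to $g_{m,c}$ (so $a=\int_0^\infty e^{-g_{m,c}}$, $b=\int_0^\infty y^2 e^{-g_{m,c}}$, $b_1=\int_0^\infty y\,e^{-g_{m,c}}$); the subtracted term is exactly $\tilde a\,\bar t^2$. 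Dropping the (nonpositive) subtracted term,
\[
\tilde d(g_{m,c_1,c_2}) \le \frac{b(c_1)+b(c_2)}{(a(c_1)+a(c_2))^3}.
\]
Now I would show this is maximized, over $c_1,c_2\ge 0$, when $c_1=c_2$: this is the key convexity/monotonicity point, and it is where I expect the real work. One approach is to fix $a(c_1)+a(c_2)$ and show the numerator $b(c_1)+b(c_2)$ is maximized at $c_1=c_2$; since $c\mapsto a(c)$ is monotone and $c\mapsto b(c)/a(c)^?$ has the relevant concavity established via $f_1,f_2,f_3$ (Lemmas \ref{lem-f3c} and the inequalities \eqref{e-f1c}, \eqref{e-f2c}), this should follow — but checking the exchange argument carefully is the technical crux. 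Granting it, $\tilde d(g_{m,c_1,c_2})\le \dfrac{2b(c)}{(2a(c))^3} = \dfrac{b(c)}{4a(c)^3} = \dfrac14 d_m(c)$ for the common value $c$, and taking the supremum over $c$ gives $\tilde d \le \frac14\max_c d_m(c)$. Finally, relate this to $p(m)$: by Lemmas \ref{lem-surgery}–\ref{lem-surgery2} the relevant extremal ratio is $d_{1/m}$, so after matching the definition $p(m)=\frac14\max_{c\ge0} d_{1/m}(c)$ one concludes $\tilde d(g) < p(m)$, the strict inequality coming from the fact that the dropped $\tilde a\,\bar t^2$ term is strictly positive unless $g$ is already symmetric, in which case the half-line bound of Theorem \ref{convex-general} is itself strict.

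The main obstacle is the symmetrization step $c_1=c_2$ for the quotient $\bigl(b(c_1)+b(c_2)\bigr)/\bigl(a(c_1)+a(c_2)\bigr)^3$: unlike the pure half-line case, here two coupled parameters interact through the shared mass constraint, and one must verify that the mass-preserving move $(c_1,c_2)\mapsto(c_1-\epsilon,c_2+\delta)$ increases the functional in the right direction for all admissible configurations, using precisely the sign of $f_3'$ (Lemma \ref{lem-f3c}) together with the monotonicity of $f_1$. If a clean global exchange argument proves elusive, a fallback is to bound $\tilde d(g_{m,c_1,c_2})$ directly by $p(m)$ via the operation-A/operation-B reductions applied once more to the two-parameter model, reducing $c_2$ to $0$ and then $c_1$ to the one-sided optimizer, at the cost of a slightly longer but entirely elementary case analysis.
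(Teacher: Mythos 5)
Your proposal has a genuine gap in both halves. For the lower bound, the surgery you describe is the wrong one and your own claim is internally inconsistent: you apply operation A on $t>0$ and operation B on $t<0$ (this is the recipe from Proposition \ref{prop-tbar}, designed to push $\bar{t}$ up, not to push $\tilde{d}$ down), you assert that each move pushes the second moment upward, and yet you conclude $\tilde{d}(g)\geq \tilde{d}(g_{m,c_1,c_2})$ — at most one of these can be right. To decrease $\tilde{d}$ one must apply operation B on \emph{both} half-lines, which is what produces the $g_{m,c_1,c_2}$ shape (curvature $M_1$ near the minimum, $M_2$ in the tails); operation A on the positive side would instead produce a $g_{1/m,\cdot}$ profile there. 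Your preliminary translation making $\bar{t}(g)=0$ is also incompatible with the surgeries, which are anchored at the minimum $t=0$ of the admissible function. Finally, the real subtlety of this half, which you skip with the phrase ``after re-centering,'' is that when operation B is performed on the side toward which the mass leans one must check that the \emph{centered} moment still decreases; the paper does this by noting that re-centering can only decrease the moment (so it suffices to decrease the moment about the old center $\bar{t}(g)$) together with the geometric fact $2\bar{t}(g)<\sigma(g)$.

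For the upper bound the logical structure does not close. Your reduction gives $\tilde{d}(g)\geq\tilde{d}(g_{m,c_1,c_2})$, so an upper bound on $\tilde{d}(g_{m,c_1,c_2})$ (obtained by dropping the $\bar{t}^2$ term, symmetrizing to $c_1=c_2$, and comparing with $p(m)$) says nothing about $\tilde{d}(g)$: the two inequalities point the same way and cannot be chained into $\tilde{d}(g)<p(m)$. The paper proves the upper bound by a separate, \emph{increasing} reduction: after normalizing $M_2=1$ it applies operation A, first on the side opposite the center of mass so that $\bar{t}$ decreases while remaining nonnegative (hence $\bar{t}^2$ decreases and $\tilde{d}$ increases), arriving in the family $g_{1/m,c',c}$; since in the last stage $\tilde{d}$ is no longer monotone, it then compares raw second moments and symmetrizes by the mass-preserving move $g_{1/m,c',c}\mapsto g_{1/m,c'+\epsilon,c-\delta}$ (operation D), whose effect is controlled by \eqref{e-f1c}, ending at the symmetric $g_{1/m,c_3,c_3}$ with $\bar{t}=0$, whose $\tilde{d}$ is $\frac14 d_{1/m}(c_3)\leq p(m)$. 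None of these ingredients (increasing reduction to the $1/m$-family, control of the sign and monotonicity of $\bar{t}$, operation D) appear in your proposal; moreover the step you yourself flag as the crux (maximization of $(b(c_1)+b(c_2))/(a(c_1)+a(c_2))^3$ at $c_1=c_2$, and the identification of $\frac14\max_c d_m(c)$ with a quantity dominated by $p(m)$) is left unproved, and even granting it the direction-of-inequality problem above remains.
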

\begin{proof}

	We can assume without loss of generality that $\bar{t}\geq 0$. Recall that in the proof of theorem \ref{convex-general} we have defined operations A and B to change $\int_{0}^{\infty }t^2e^{-g(t)}dt$ while keeping $\int_{0}^{\infty }e^{-g(t)}dt$. The idea here is similar.
	For the lower bound, we first notice that if we get a function $g_1$ satisfying $\int_{0}^{\infty }e^{-g_1(t)}dt=\int_{0}^{\infty }e^{-g(t)}dt=1$ and $\int_{-\infty}^{\infty }(t-\bar{t}(g))^2e^{-g_1(t)}dt\leq \int_{-\infty}^{\infty }(t-\bar{t}(g))^2e^{-g(t)}dt$, then 
	$$\int_{-\infty}^{\infty }(t-\bar{t}(g_1))^2e^{-g_1(t)}dt\leq \int_{-\infty}^{\infty }(t-\bar{t}(g))^2e^{-g(t)}dt,$$
	since $$\int_{-\infty}^{\infty }(t-\bar{t}(g_1))^2e^{-g_1(t)}dt\leq \int_{-\infty}^{\infty }(t-a)^2e^{-g_1(t)}dt,$$
	for any $a$. 
	Then we first apply operation B to the part $t\leq 0$. By abuse of notations, we will still denote by $g$ the new function after this operation. Clearly, this will decrease $\tilde{b}(g)$. And since such an operation B will increase $\bar{t}(g)$, we can continue this operation until $g(-t)$ for $t\geq 0$ is a dilation of $g_{m,c}$ for some $c$. Then we apply operation B to the part $t\geq 0$. To show that we can also continue applying operation B, we denote by $$\sigma(g)=\inf\{x>0|g''(t)=M_2 \quad \textit{for} \quad t\geq x \}. $$ 
	Then since $\bar{t}(g)$ is the center of mass, we must have $2\bar{t}(g)<\sigma(g)$. Therefore small operation B on the part $t\geq 0$ will always decrease $\tilde{b}(g)$. Let $g_2$ be the limit function, then $g_2$ must be a dilation of $g_{m,c_1,c_2}$ for some $c_1, c_2\geq 0$.

	\

	For the upper bound, we first make a dilation to make $M_2=1$. Assume again that $\bar{t}\geq 0$. We first apply operation A on $g(t)$ for $t\leq 0$. Notice that this will increase $\int_{-\infty}^{\infty}t^2e^{-g(t)}dt$ and decrease $\bar{t}$. And as long as the new $\bar{t}\geq 0$, $\bar{t}^2$ decreases. Therefore such an operation will increase $\tilde{d}(g)$. Then we keep doing this operation A until it cannot be continued. There are two possibilities:
	\begin{itemize}
		\item[(1)]$\exists c\leq 0$ such that $g(-t)$, $t\geq 0$, is $g_{1/m,c}$;
		\item[(2)]$\bar{t}=0$. 
	\end{itemize} 
	In the second case, we can apply operation A for the two sides $t\geq 0$ and $t\leq 0$ simultaneously while keeping $\bar{t}=0$. As long as neither side is some $g_{1/m,c}$, we will continue this operation. This operation will end until one side, which we can assume is the part $t\leq 0$, is some $g_{1/m,c}$. 

	Then we keep applying operation A on the part $t\geq 0$ until the limit function $g_3$ is some $g_{1/m,c'}$. We denote the new function by $g_4$. Notice that during this process, both $\int t^2 e^{-g(t)}dt$ and $\bar{t}(g)$ increases. It is not guaranteed that $\tilde{g}$ increases.

	For $t\in \R$, $g_4=g_{1/m,c',c}$. 
    Then $\bar{t}>0$ imples that $c>c'$. Then we apply a new operation D by replacing $g_{1/m,c',c}$ with $g_{1/m,c'+\epsilon,c-\delta}$ for some small positve $\epsilon, \delta$ satisfying $$\tilde{a}(g_{1/m,c'+\epsilon,c-\delta})=\tilde{a}(g_{1/m,c',c}).$$
	Then, formula \ref{e-f1c} implies that when $\epsilon$ is small enough, we have $$\int_{-\infty}^{\infty}t^2e^{-g_{1/m,c',c}(t)}dt<\int_{-\infty}^{\infty}t^2e^{-g_{1/m,c'+\epsilon,c-\delta}}dt.$$
	So we can continue applying operation D until we get $c=c'=c_3$. Notice now that $\bar{t}(g_{1/m,c_3,c_3})=0$ and we have 
	$\int t^2 e^{-g(t)}dt<\int_{-\infty}^{\infty}t^2e^{-g_{1/m,c_3,c_3}}$ and $\int e^{-g(t)}dt=\int_{-\infty}^{\infty}e^{-g_{1/m,c_3,c_3}}$, therefore, $\tilde{g}<p(m)$.

\end{proof}
\begin{remark}
	We expect that $\tilde{d}(g)>q(m) $ also holds. However, we are not able to show this due to the complexity of the calculations.
\end{remark}
\begin{cor}\label{cor-low}
	Let $g(t)$ be a $(M_1,M_2)$-admissible function. Let $m=\frac{M_2}{M_1}$. Then we have
	$$\tilde{d}(g)\geq q(m)-\frac{4}{\pi^2}\frac{(1-\sqrt{\frac{1}{m}})^2}{(1+\sqrt{\frac{1}{m}})^2}.$$
\end{cor}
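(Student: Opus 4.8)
The plan is to combine the lower‑bound half of Theorem~\ref{thm-dtilde} with an explicit evaluation of $\tilde d$ on the model family $g_{m,c_1,c_2}$. Since $\tilde d$ is dilation‑invariant I normalize $M_1=1$, $M_2=m$, so that $g_{m,c}(y)=\frac{y^2}{2}$ for $y\le c$ and $g_{m,c}''=m$ beyond $c$; the case $m=1$ being trivial, assume $m>1$. By Theorem~\ref{thm-dtilde} there are $c_1\ge c_2\ge 0$ with $\tilde d(g)\ge\tilde d(g_{m,c_1,c_2})$, so it suffices to bound the latter. With the half‑line integrals
$$a(c)=\int_0^\infty e^{-g_{m,c}},\qquad b(c)=\int_0^\infty y^2e^{-g_{m,c}},\qquad b_1(c)=\int_0^\infty ye^{-g_{m,c}}$$
(the ones occurring in $d_m(c)$ and in the proof of Lemma~\ref{lem-f3c}), splitting each integral at the origin and using $\bar t(g_{m,c_1,c_2})=\frac{b_1(c_1)-b_1(c_2)}{a(c_1)+a(c_2)}$ gives
$$\tilde d(g_{m,c_1,c_2})=\frac{b(c_1)+b(c_2)}{\bigl(a(c_1)+a(c_2)\bigr)^3}-\left(\frac{b_1(c_1)-b_1(c_2)}{\bigl(a(c_1)+a(c_2)\bigr)^2}\right)^{\!2}.$$
Thus the corollary reduces to bounding the first term below by $q(m)$ and the bracket in the second term above by $\frac{2}{\pi}\frac{1-m^{-1/2}}{1+m^{-1/2}}$.

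The first bound is immediate: since $q(m)=\tfrac14\min_{c\ge0}d_m(c)$ one has $b(c)=d_m(c)a(c)^3\ge 4q(m)a(c)^3$, and the elementary identity $4(x^3+y^3)-(x+y)^3=3(x-y)^2(x+y)\ge0$ gives $b(c_1)+b(c_2)\ge q(m)\bigl(a(c_1)+a(c_2)\bigr)^3$, hence the first term is $\ge q(m)$ (with surplus $3q(m)(a(c_1)-a(c_2))^2(a(c_1)+a(c_2))$ when $c_1\ne c_2$, kept in reserve).

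For the second term I would use the convexity structure of the family. From the formulas before Lemma~\ref{lem-f3c}, for $m>1$ the functions $a$ and $b_1$ are strictly increasing, with $a(0)=\sqrt{\pi/(2m)}$, $b_1(0)=1/m$ and limits $a(\infty)=\sqrt{\pi/2}$, $b_1(\infty)=1$, and
$$\frac{db_1}{da}=\frac{b_1'(c)}{a'(c)}=m^{-1/2}f_2(\sqrt m\,c),$$
which is increasing by \eqref{e-f2c}; hence $b_1$ is a convex function of $a$ on $[a(0),a(\infty)]$. Therefore $b_1(a_1)$ lies below the chord while $b_1(a_2)\ge 1/m$, so writing $a_i=a(c_i)$,
$$\frac{b_1(c_1)-b_1(c_2)}{\bigl(a(c_1)+a(c_2)\bigr)^2}\ \le\ \frac{1-1/m}{a(\infty)-a(0)}\cdot\frac{a_1-a(0)}{(a_1+a_2)^2}.$$
The elementary maximum of $\frac{a_1-a(0)}{(a_1+a_2)^2}$ over $a_1,a_2\in[a(0),a(\infty)]$ is attained at $a_1=a(\infty)$, $a_2=a(0)$ (as soon as $a(\infty)\le 3a(0)$) and equals $\frac{a(\infty)-a(0)}{(a(\infty)+a(0))^2}$, so the left side is $\le\frac{1-1/m}{(a(\infty)+a(0))^2}=\frac{2}{\pi}\frac{1-m^{-1/2}}{1+m^{-1/2}}$. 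Squaring and subtracting from the first‑term bound completes the proof.

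The step I expect to be the obstacle is the last maximization: $a(\infty)\le 3a(0)$ means $\sqrt m\le 3$, that is, $m\le 9$, which comfortably covers all curvature ratios $m$ arising in Sections~4--5 (where $m\to1$). For the remaining $m$ the naive estimate fails and one must work harder: for moderate $m$ one feeds the reserved surplus $3q(m)(a(c_1)-a(c_2))^2(a(c_1)+a(c_2))$ back into the comparison, and for large $m$ the statement is in fact vacuous, since $\tilde d(g)\ge0$ while $q(m)\le \tfrac14 d_m(0)=\frac{1}{2\pi}$ is eventually dominated by $\frac{4}{\pi^2}\frac{(1-m^{-1/2})^2}{(1+m^{-1/2})^2}$. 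I would present the main case in full and dispose of the tail with these two observations.
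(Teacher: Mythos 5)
Your main case is essentially sound, and for the subtracted (centre-of-mass) term it takes a genuinely different route from the paper: the paper never reduces to the model family via Theorem \ref{thm-dtilde}, but works with the general admissible $g$ directly, handling the first term exactly as you do (half-line bound plus $\frac{A^3+B^3}{(A+B)^3}\ge\frac14$) and controlling $\bar t$ by Proposition \ref{prop-tbar}. Your substitute — reducing to $g_{m,c_1,c_2}$, writing $\tilde d(g_{m,c_1,c_2})=\frac{b(c_1)+b(c_2)}{(a_1+a_2)^3}-\bigl(\frac{b_1(c_1)-b_1(c_2)}{(a_1+a_2)^2}\bigr)^2$, and then using monotonicity of $a,b_1$ together with the convexity of $b_1$ as a function of $a$ coming from \eqref{e-f2c}, followed by the chord bound and the elementary maximization — is correct as far as it goes, and is in fact more explicit than the paper's one-line appeal to $|\bar t|\le\sqrt{\tfrac{2}{\pi M_1}}(1-\sqrt{1/m})$.

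The genuine gap is that, as written, your argument does not cover all $m$. The maximization step requires $a(\infty)\le 3a(0)$, i.e.\ $m\le 9$, and your two closing observations do not dispose of the rest. The "surplus" remark is only a sketch: you never verify that $3q(m)\frac{(a_1-a_2)^2}{(a_1+a_2)^2}$ compensates the excess of $\frac{1-1/m}{a(\infty)-a(0)}\cdot\frac1{8a(0)}$ (the value at the unconstrained maximizer $a_1=3a(0)$, $a_2=a(0)$) over $\frac2\pi\frac{1-\sqrt{1/m}}{1+\sqrt{1/m}}$. And the "vacuous for large $m$" observation, using only $\tilde d\ge 0$ and $q(m)\le\frac1{2\pi}$, applies only when $\frac4{\pi^2}\frac{(1-\sqrt{1/m})^2}{(1+\sqrt{1/m})^2}\ge\frac1{2\pi}$, i.e.\ roughly $m\ge 19$; so the window $9<m<19$ is left open. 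The repair is easy and stays inside the paper's toolkit: replace $\tilde d\ge 0$ by Corollary \ref{cor-12}, $\tilde d(g)\ge\frac1{12}$ (its proof just reruns Theorem \ref{convex-general} and does not use the present corollary, so there is no circularity). Writing $\alpha=\sqrt{1/m}$, for $m\ge 9$ one has $\frac{1-\alpha}{1+\alpha}\ge\frac12$, hence $q(m)-\frac4{\pi^2}\frac{(1-\alpha)^2}{(1+\alpha)^2}\le\frac1{2\pi}-\frac1{\pi^2}<\frac1{12}\le\tilde d(g)$, which together with your $m\le 9$ case gives the full statement.
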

\begin{proof}
	Write $A=\int_{0}^{\infty}e^{-g(y)}dy $, $B=\int_{0}^{\infty}e^{-g(y)}dy $. We have 
	$$\tilde{d}(g)=\frac{\int_{-\infty}^{\infty}t^2e^{-g(t)}dt}{(\int_{-\infty}^{\infty}e^{-g(t)}dt)^3}-\frac{\bar{t}^2(g)}{(\int_{-\infty}^{\infty}e^{-g(t)}dt)^2}$$
	 Then since 
	$$\frac{A^3+B^3}{(A+B)^3}\geq \frac{1}{4},$$ and $$|\bar{t}|\leq \sqrt{\frac{2}{\pi M_1}}(1-\sqrt{\frac{1}{m}}),$$
	we have the conclusion.
\end{proof}
This corollary does not give us a priori lower bound for $\tilde{d}(g)$. So we need to prove the following:
\begin{cor}\label{cor-12}
	Let $g(t)$ be a $(M_1,M_2)$-admissible function. Then we have
	$$\tilde{d}(g)\geq \frac{1}{12}.$$
\end{cor}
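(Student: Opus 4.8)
The plan is to set aside the fine information in Corollary~\ref{cor-low} (which degenerates as $m=M_2/M_1\to\infty$, since the correction term $\frac{4}{\pi^2}\frac{(1-\sqrt{1/m})^2}{(1+\sqrt{1/m})^2}$ there eventually swamps $q(m)$) and to use instead a single crude but scale- and $m$-uniform feature of admissibility. If $g$ is $(M_1,M_2)$-admissible, then $g(0)=0$, $g'(0)=0$, and $g$ is convex on $\R$, so $0$ is the global minimum of $g$; hence $0\le e^{-g(t)}\le 1$ for every $t$, with equality only at $t=0$. Writing $\tilde a=\tilde a(g)$, this says the normalized density $\rho(t)=\tilde a^{-1}e^{-g(t)}$ is a probability density on $\R$ with $\rho\le \tilde a^{-1}$; and since $\tilde d(g)=\tilde b(g)/\tilde a^3$ is exactly the variance of $\rho$ divided by $\tilde a^2$, the claim reduces to the elementary fact that a probability density bounded by $M$ has variance at least $\frac1{12M^2}$ (with equality for the uniform density on an interval of length $1/M$), applied with $M=\tilde a^{-1}$.

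Carrying this out, I would first record that $g''>M_1>0$ together with $g(0)=g'(0)=0$ gives $g(t)\ge \tfrac{M_1}{2}t^2$, so $e^{-g}$ has finite moments of orders $0,1,2$ and every quantity below makes sense. Next, for $\rho=\tilde a^{-1}e^{-g}$ and any $a\in\R$, $r>0$, the pointwise bound $\rho\le \tilde a^{-1}$ yields $\int_{a-r}^{a+r}\rho\le \min\{2r/\tilde a,\,1\}$. Feeding this into the layer-cake formula (legitimate by nonnegativity, i.e.\ Tonelli) gives, for every $a$,
$$\int_{\R}(t-a)^2\rho(t)\,dt=\int_0^\infty\!\Big(1-\int_{a-\sqrt s}^{a+\sqrt s}\rho(t)\,dt\Big)ds\ \ge\ \int_0^\infty\!\max\Big\{1-\tfrac{2\sqrt s}{\tilde a},\,0\Big\}ds=\frac{\tilde a^2}{12}.$$
Taking $a=\bar t(g)$ then gives $\tilde b(g)=\tilde a\int_{\R}(t-\bar t(g))^2\rho(t)\,dt\ge \tilde a^3/12$, that is $\tilde d(g)=\tilde b(g)/\tilde a(g)^3\ge \frac1{12}$, which is the assertion.

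I do not foresee a real obstacle: the entire content of the argument is the observation that one should throw away all the curvature information except $\sup_t e^{-g(t)}=1$, which is precisely what makes the bound uniform in $m$ and, unlike Corollary~\ref{cor-low}, insensitive to the regime $m\to\infty$. The only items that need a line of justification are the finiteness of $\tilde b(g)$ (via the quadratic minorant $g(t)\ge\tfrac{M_1}{2}t^2$) and the interchange in the layer-cake step. It is worth noting that the inequality is in fact strict for every genuine admissible $g$, since equality in the variance bound would force $\rho$ to be the uniform density on an interval, which occurs only in the degenerate limit $M_1\to 0$, $M_2\to\infty$; but $\tilde d(g)\ge\frac1{12}$ is all that is required here, and this constant is sharp because the limiting uniform densities realize it.
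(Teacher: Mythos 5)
Your proof is correct, and it takes a genuinely different route from the paper. The paper proves this corollary by rerunning the surgery argument of Theorem \ref{convex-general} (operations A and B) on both half-lines and then relaxing $M_1\to 0$, $M_2\to\infty$, so that the normalized profile degenerates to the uniform density on an interval of unit mass, for which $\tilde d=\frac{1}{12}$; the bound is then inherited in the limit. You instead discard all curvature information except the single consequence of admissibility that $g(0)=g'(0)=0$ and $g$ convex force $g\ge 0$, i.e.\ $\sup_t e^{-g(t)}=1$, so that $\rho=\tilde a(g)^{-1}e^{-g}$ is a probability density bounded by $\tilde a(g)^{-1}$; the classical sharp variance lower bound for densities bounded by $M$ (variance $\ge \frac{1}{12M^2}$, proved by your layer-cake computation, with equality exactly for the uniform density) applied at $a=\bar t(g)$ then gives $\tilde b(g)\ge \tilde a(g)^3/12$, i.e.\ $\tilde d(g)\ge\frac1{12}$. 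Your computation checks out: the quadratic minorant $g(t)\ge\frac{M_1}{2}t^2$ justifies finiteness of $\tilde a,\tilde b$, Tonelli justifies the layer-cake step, and $\int_0^\infty\max\{1-2\sqrt{s}/\tilde a,0\}\,ds=\tilde a^2/12$. What your approach buys is a short, self-contained argument that is manifestly uniform in $(M_1,M_2)$ and avoids entirely the deformation-and-degenerate-limit step of the paper (which, as written, leans on convergence of the surgered functions and a dominated-convergence passage in a limit where the integrand concentrates on an indicator profile); it also shows the inequality is strict for any genuine admissible $g$. What the paper's route buys is consistency with its dominating-function machinery, of which this corollary is a degenerate instance rather than a separate fact.
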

\begin{proof}
	One can repeat the proof of theorem \ref{convex-general}, and then relax $M_1\to 0$ and $M_2\to\infty$. In order for the total mass to be 1, we arrive at the limit function $$g_{\infty}(t)=\begin{cases}
		1, &c_1<t< c_2\\
		0, &t> c_2\quad \textit{or}\quad t<c_1
	\end{cases},$$
	where $c_1<0<c_2$ and $c_2-c_1=1$. Then clearly, $\tilde{d}(g_{\infty})=\frac{1}{12}$.
\end{proof}
We end this subsection with the following proposition:
\begin{prop}\label{prop-minmax}
	For fixed $m>1$, $\tilde{d}(g_{m,c_1,c_2})$ considered as a function of $c_1\geq 0, c_2\geq 0$ attains its minimum. 
\end{prop}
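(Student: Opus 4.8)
The plan is to show that $\tilde d(g_{m,c_1,c_2})$, as a function on the closed quadrant $\{c_1\ge 0,\ c_2\ge 0\}$, is continuous and attains its minimum by controlling the behavior as $(c_1,c_2)$ escapes to infinity and reducing to a compact set. Continuity of $\tilde d(g_{m,c_1,c_2})$ in $(c_1,c_2)$ is immediate from the explicit description of $g_{m,c}$ and dominated convergence (the quadratic growth of $g_{m,c_1,c_2}$, uniform in $(c_1,c_2)$ on compacta, gives integrable dominating functions for $\tilde a$, $\tilde b$ and the center of mass $\bar t$, and $\tilde a\ge$ a positive constant so the denominator stays bounded away from $0$). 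So it suffices to prove that $\tilde d$ cannot approach its infimum along a sequence $(c_1^{(k)},c_2^{(k)})$ with $\max(c_1^{(k)},c_2^{(k)})\to\infty$; equivalently, that $\liminf$ of $\tilde d$ along any such escaping sequence is strictly larger than the value of $\tilde d$ at some fixed point of the quadrant, e.g.\ at $(0,0)$ where $\tilde d(g_{m,0,0})$ equals the Gaussian value $\tfrac{2}{\pi}$ after the appropriate normalization.

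First I would record the scaling invariance: $\tilde d(g(t))=\tilde d(g(\lambda t))$, so we may normalize $M_1=1$, $M_2=m$, and then $g_{m,c_1,c_2}$ is the piecewise-quadratic function that is $\tfrac{t^2}{2}$ (resp.\ $\tfrac{t^2}{2}$) near $0$ and switches to slope-$m$ quadratics beyond $c_1$ (resp.\ beyond $-c_2$). The key analytic point is a lower bound of the form $g_{m,c_1,c_2}(t)\ge \tfrac{m}{2}(|t|-c)^2_+ + (\text{linear})$ which makes the tails genuinely Gaussian with variance proportional to $1/m$, uniformly in $c_1,c_2$. Using this I would argue: if, say, $c_1\to\infty$ while $c_2$ stays bounded, then on the half-line $t\ge 0$ the function is $\tfrac{t^2}{2}$ on an interval of length $c_1\to\infty$, so $\int_0^\infty e^{-g}\,dt\to\sqrt{\pi/2}$ and $\int_0^\infty t^2e^{-g}\,dt\to\sqrt{\pi/2}$ while the contribution from $t<0$ stays bounded; a direct computation then shows $\tilde d\to$ a finite limit which one checks is $\ge \tfrac{2}{\pi}-\varepsilon$ (indeed, by Corollary~\ref{cor-12}, $\tilde d\ge\tfrac1{12}$ always, and the limiting one-sided-Gaussian configurations all have $\tilde d$ bounded below by an explicit constant exceeding $\tfrac1{12}$). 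The case $c_1,c_2\to\infty$ simultaneously is even easier: both sides become standard Gaussians and $\tilde d\to\tfrac{2}{\pi}$. In every escaping scenario $\liminf \tilde d$ exceeds $\inf_{c_1,c_2\ge 0}\tilde d$, which by Corollary~\ref{cor-low} and the discussion preceding the proposition is $<\tfrac{2}{\pi}$ for $m>1$ strictly; hence a minimizing sequence must stay in a compact set, and continuity delivers the minimum.

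I expect the main obstacle to be making the ``escaping sequence'' analysis genuinely uniform: one must handle the regime where $c_1\to\infty$ but $c_2$ is small, where $c_1$ is comparable to $c_2$ but both large, and where one of them is bounded, and in each case produce a clean lower bound on $\liminf\tilde d$ that beats the infimum. The cleanest way around this is probably not to chase exact limits but to combine two cheap a priori bounds: (i) the universal bound $\tilde d\ge \tfrac1{12}$ from Corollary~\ref{cor-12}, refined to note that equality is only approached in the degenerate $M_1\to0$, $M_2\to\infty$ limit, which is excluded here since $m$ is fixed; and (ii) the observation that along any escaping sequence at least one tail is a fixed-shape quadratic, forcing that side's contribution to $(\tilde a,\tilde b)$ to converge to the Gaussian values, which pins $\liminf\tilde d$ above a constant $c(m)>\tfrac1{12}$. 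Once $\liminf\tilde d$ along every escaping sequence is shown to exceed $\inf\tilde d$ — which is attained at an interior-or-boundary point at finite distance by Corollary~\ref{cor-low} (the bound there is achieved by a configuration with finite $c_1,c_2$) — the minimum is attained on a compact set, completing the proof.
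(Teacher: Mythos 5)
Your overall skeleton (continuity plus ruling out escaping minimizing sequences) is the same as the paper's, and the regime $c_1,c_2\to\infty$ is handled in essentially the paper's way, except for a constant slip: with $\tilde d$ as defined on the full line, the Gaussian limit value is $\frac{1}{2\pi}$, not $\frac{2}{\pi}$, and $\tilde d(g_{m,0,0})=\frac{1}{2\pi}$ as well (it is a pure Gaussian of variance $1/m$, and $\tilde d$ is dilation invariant). Consequently $(0,0)$ cannot serve as your comparison point, since the escaping limit equals that very value; the correct comparison is $d_m\le q(m)=\tilde d(g_{m,c^*,c^*})<\frac{1}{2\pi}$ for $m>1$, which is what the paper uses.

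The genuine gap is the mixed regime $c_1\to\infty$ with $c_2$ bounded (and its mirror). There the limit configuration is Gaussian on one side and $g_{m,c_2^*}$ on the other, and neither of your two ``cheap bounds'' closes the argument: the universal bound $\tilde d\ge\frac{1}{12}$ of Corollary \ref{cor-12}, even strengthened to some $c(m)>\frac{1}{12}$, is irrelevant because the quantity you must beat is $d_m=\inf_{c_1,c_2}\tilde d(g_{m,c_1,c_2})$, which a priori lies anywhere in $[\frac{1}{12},q(m)]$; and the claim that the liminf is $\ge\frac{2}{\pi}-\varepsilon$ is both the wrong constant and unproved (nothing in your sketch shows the one-sided limit configurations have $\tilde d$ strictly above $d_m$). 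The final appeal to Corollary \ref{cor-low} to assert that the infimum ``is attained by a configuration with finite $c_1,c_2$'' is circular: that corollary is only an a priori lower bound for $\tilde d$ of admissible functions and says nothing about where the infimum over the family $g_{m,c_1,c_2}$ is approached — that is exactly what Proposition \ref{prop-minmax} asserts. The paper closes this regime by a different device: for $c_2\le N_1$ and $c_1>N_2$ large it performs the mass-preserving replacement $g_{m,c_1,c_2}\mapsto g_{m,c_1-\epsilon,c_2+\delta}$ and shows it strictly decreases $\tilde d$, because the mass removed near $t\approx c_1$ carries second moment about $\bar t$ of order at least $N_2^2$ (here $\bar t$ is bounded in terms of $m$ by Proposition \ref{prop-tbar}), while the mass added near $t\approx -c_2$ contributes second moment bounded in terms of $m$ and $N_1$. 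This strict improvement pushes any minimizing sequence into a compact set, after which continuity finishes the proof. You would need to supply an argument of this kind (or an actual computation showing every one-sided limit configuration has $\tilde d$ strictly above $d_m$) for your reduction to compactness to be valid.
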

\begin{proof}
	We denote by $d_m=\inf_{c_1\geq 0, c_2\geq 0} \tilde{d}(g_{m,c_1,c_2})$.
	We first notice that for $c$ large enough, the three integrals $\int_{c}^{\infty}e^{-g_{m,c}(t)}dt$, $\int_{c}^{\infty}te^{-g_{m,c}(t)}dt$ and $\int_{c}^{\infty}t^2e^{-g_{m,c}(t)}dt$ are all very small. Therefore, for $c_1$ and $c_2$ large engough, $\tilde{d}(g_{m,c_1,c_2})$ is close to $\frac{1}{2\pi}$. Since it is clear that $$d_m\leq q(m)<\frac{1}{2\pi},$$ for $m>1$, we have that for any $\epsilon_1>0$, $\exists N_1>0$ such that when $c_1\geq N_1$ and $c_2\geq N_1$, we have 
	$$\tilde{d}(g_{m,c_1,c_2})>\inf \tilde{d}(g_{m,c_1,c_2})+\epsilon_1.$$

	We claim that $\exists N_2>0$ such that for $N_1\geq c_2\geq 0$, for $c_1>N_2$ we have $$\tilde{d}(g_{m,c_1,c_2})>\tilde{d}(g_{m,c_1-\epsilon,c_2+\delta}),$$
	for some $\epsilon>0, \delta>0$. By symmetry, the same holds if we switch $c_1$ and $c_2$. It will then follow that $\tilde{d}(g_{m,c_1,c_2})$ attains its minimum. 
	To see that the claim holds one only need to notice that if $g_1(t)>g(t)$ for $t>l>0$, then $$\frac{\int_{t\geq l}t^2(e^{-g(t)}-e^{-g_1(t)})dt }{\int_{t\geq l}(e^{-g(t)}-e^{-g_1(t)})dt}>l^2.$$
	Now we apply an operation by replacing $g_{m,c_1,c_2}$ with $g_{m,c_1-\epsilon,c_2+\delta}$, where $\delta>0$ is chosen according to $\epsilon>0$ so that the total mass $$\int_{-\infty}^{\infty}e^{-g_{m,c_1-\epsilon,c_2+\delta}}dt=\int_{-\infty}^{\infty}e^{-g_{m,c_1,c_2}}dt.$$  
	Since $\bar{t}(g)$ is bounded in terms of $m$, and $$\frac{\int_{0}^{\infty}t^2(e^{-g_{m,c_2+\delta}}-e^{-g_{m,c_2}})dt }{\int_{0}^{\infty}(e^{-g_{m,c_2+\delta}}-e^{-g_{m,c_2}})dt}$$ is bounded in terms of $m$ and $N_1$,  
	it is then clear that when $N_2$ is large enough, we have 
	$$\tilde{d}(g_{m,c_1,c_2})>\tilde{d}(g_{m,c_1-\epsilon,c_2+\delta}).$$

\end{proof}

	\section{Refined estimates}
	In order to apply our estimates on $\tilde{d}(g)$, we first notice that 
	\begin{eqnarray*}
		\frac{d^2}{da^2}\lambda(a)&=&\int_0^\infty\frac{(\log x-\tilde t_a)^2dx}{h_a(x)}\\
		&=&\frac{x_{a+1}}{h_a(x_{a+1})}\int_{-\infty}^\infty (t-\tilde t_a)^2 e^{-g(t)}dt .
	\end{eqnarray*}
	We have $g(t)$ a convex function satisfying $g(t_{a+1})=g'(t_{a+1})=0$ and $g''(t)=\frac{d^2}{dt^2}\log f(x)$. Also $$\tilde t_a=\bar{t}(g),$$ in the notations of the previous subsection.
	Also since $\frac{x_{a+1}}{h_a(x_{a+1})}\int_{-\infty}^\infty e^{-g(t)}dt=1$, we have the total mass $$\int_{-\infty}^\infty e^{-g(t)}dt=\frac{h_a(x_{a+1})}{x_{a+1}}.$$
	Since $$\frac{da}{dt_a}=\frac{d^2}{dt^2}\log f(x_a),$$ we have $t_{a+1}=t_a+O(\frac{1}{a})$, therefore $x_{a+1}=x_a+O(1)$. We have 
	$\frac{d^2}{dt^2}\log h_a(x)=O(x)$ and $\frac{d}{dt}\log h_a(x_a)=0$. So $$\log h_a(x_{a+1})=\log h_a(x_a)+O(\frac{1}{a}).$$
	Therefore 
	\begin{equation}\label{e-g}
		\int_{-\infty}^\infty e^{-g(t)}dt=\frac{h_a(x_{a+1})}{x_{a+1}}=(1+O(\frac{1}{a}))\frac{h_a(x_{a})}{x_{a}}.
	\end{equation}
	
	For $\frac{d^2}{dt^2}\log f(x)$, we have $$\frac{d^2}{dt^2}\log f(x_a)=\frac{d^2}{dt^2}\log h_a(x_a)=\sum(i-a)^2\tau_{x_a}(i),$$
	where \begin{eqnarray*}
		\tau_{x_a}(i)&=&\frac{1}{h_a(x_a)}\frac{c_i}{c_a}x_a^{i-a}\\
		&=&\frac{1}{h_a(x_a)}\frac{c_{n_{x_a}}}{c_a}x_a^{n_{x_a}-a}e^{-G(i)},
	\end{eqnarray*}
	where $G(i)$ is a convex function satisfying
	$G(n_{x_a})=G'(n_{x_a})=0$ and $G''(i)=\frac{d^2}{di^2}\lambda(i)$. We will denote by $\Delta_a=\frac{c_{n_{x_a}}}{c_a}x_a^{n_{x_a}-a}$. Since $\frac{d^2}{da^2}\lambda(a)=O(\frac{1}{a})$, we can use integral to estimate the summations $\sum \tau_{x_a}(i)$ and $\sum(i-a)^2\tau_{x_a}(i)$ with relative error of size $O(\frac{1}{\sqrt{a}})$. Since $\sum \tau_{x_a}(i)=1$, we have
	\begin{equation}\label{e-gg}
		\int_{i\geq 0}e^{-G(i)}di=(1+O(\frac{1}{\sqrt{a}}))\frac{h_a(x_a)}{\Delta_a}.
	\end{equation}

	Now we are ready to prove the following theorem.
	\begin{theorem}\label{main-1}
		We have the following:
		\begin{equation} \label{eq3.9}
		\lim_{a\to \infty}\frac{h_a(x_a)}{\sqrt{a}}=\sqrt{2\pi},
		\end{equation}
		\begin{equation}\label{eq3.10}
		\lim_{a\to \infty}a\frac{d^2}{da^2}\lambda(a)=1,
		\end{equation}
		and
		\begin{equation}\label{eq3.11}
		\lim_{x\to \infty} \frac{\frac{d^2}{dt^2}\log f(x)}{x}=1
		\end{equation}
		
	\end{theorem}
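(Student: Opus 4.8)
The three assertions are tightly coupled, so the plan is to prove them simultaneously by a bootstrap. Write $P(x)=\frac{d^2}{dt^2}\log f(x)$ and $Q(a)=\frac{d^2}{da^2}\lambda(a)$. The computations preceding the statement already realize both quantities by Laplace's method: with $g$ the convex function of $t$ such that $g(t_{a+1})=g'(t_{a+1})=0$, $g''=P$, and $G$ the convex function of $i$ such that $G(n_{x_a})=G'(n_{x_a})=0$, $G''=Q$, one has
\[ Q(a)=\tilde d(g)\,\tilde a(g)^2\,(1+o(1)),\qquad P(x_a)=\tilde d(G)\,\tilde a(G)^2\,(1+o(1)), \]
where $\tilde a(g)=\frac{h_a(x_a)}{a}(1+o(1))$ by \eqref{e-g} and $\tilde a(G)=\frac{h_a(x_a)}{\Delta_a}(1+o(1))$ by \eqref{e-gg}; here one uses $x_{a+1}=x_a+O(1)$, the estimate $x_a=a+O(\sqrt{a}\log a)$, that on the $\lambda$-side the center of mass $\bar t(g)=\tilde t_a$ is exact, and that on the other side $\sum_i(i-a)\tau_{x_a}(i)=u(x_a)-a=0$ exactly (so the only centering error is the $O(a^{-1/2})$ sum-to-integral error).

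Next I would bring in convexity. If $h$ is convex with $h(t_0)=h'(t_0)=0$ and $M_1\le h''\le M_2$ on the interval where $e^{-h}$ is not superpolynomially small, then $\sqrt{2\pi/M_2}\le\int e^{-h}\le\sqrt{2\pi/M_1}$ up to a $(1+o(1))$ factor. The effective integration windows here are multiplicatively of width $1+o(1)$ in $x$ (respectively in $i$, around $n_{x_a}\sim a$), so applied to $g$ and $G$ this gives $\tilde a(g)^2P(x_a)\in 2\pi[m_g^{-1},m_g]$ and $\tilde a(G)^2Q(a)\in 2\pi[m_G^{-1},m_G]$, where $m_g$ is the $\sup/\inf$ ratio of $P(x)/x$ over the window and $m_G$ that of $iQ(i)=i\lambda''(i)$. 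Since $g$ and $G$ (modified away from the window, which costs only $\epsilon(a)$) are $(M_1,M_2)$-admissible with ratios $m_g$, $m_G$, Theorem \ref{thm-dtilde}, Corollary \ref{cor-low} and Corollary \ref{cor-12} give $\tilde d(g)\in[\underline q(m_g),p(m_g)]+\epsilon(a)$ and $\tilde d(G)\in[\underline q(m_G),p(m_G)]+\epsilon(a)$, where $\underline q(m)=\max\!\big(\frac{1}{12},\,q(m)-\frac{4}{\pi^2}\frac{(1-1/\sqrt{m})^2}{(1+1/\sqrt{m})^2}\big)$ and $p(m),\underline q(m)\to\frac{1}{2\pi}$ as $m\to1$. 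Finally two a priori ``average'' facts are used for normalization: from Proposition \ref{prop-start}, $\frac{1}{x}\int_1^x\frac{P(y)}{y}\,dy=\frac{u(x)-u(1)}{x}\to1$; and from Proposition \ref{prop-tatilde} together with $x_a=a+O(\sqrt{a}\log a)$, $\lambda'(a)=\tilde t_a=t_{a+1}+O(\log a/\sqrt{a})=\log a+o(1)$, so $\int_1^a Q(s)\,ds=\lambda'(a)-\lambda'(1)=\log a+O(1)$. In particular $\liminf\frac{P(x)}{x}\le1\le\limsup\frac{P(x)}{x}$ and $\liminf aQ(a)\le1\le\limsup aQ(a)$.

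Combining the above, for every large $a$,
\[ aQ(a)\cdot\frac{P(x_a)}{x_a}=2\pi\,\tilde d(g)\,\zeta_g\,(1+o(1)),\qquad P(x_a)\,Q(a)=2\pi\,\tilde d(G)\,\zeta_G\,(1+o(1)), \]
with $\zeta_g\in[m_g^{-1},m_g]$, $\zeta_G\in[m_G^{-1},m_G]$, and since $a/x_a\to1$ the two left sides agree up to $(1+o(1))$. Taking $\limsup$ and $\liminf$ over $a$ and bounding the window oscillations by the global ratios $\rho_P=\limsup\frac{P(x)}{x}/\liminf\frac{P(x)}{x}$ and $\rho_Q=\limsup aQ(a)/\liminf aQ(a)$, one gets each of $\rho_P,\rho_Q$ estimated in terms of the other via $p$ and $\underline q$. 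Starting from the rough bounds (Lemma \ref{cor-13}, Proposition \ref{prop-big-upp}, and the sharpenings noted in the Remark after Proposition \ref{prop-big-upp}) and iterating this coupled system should drive the oscillation ratios to $1$; in the final range $\rho_P,\rho_Q\in[1,1.01)$ the quantitative estimate $0<F'(m)<0.84$ of Proposition \ref{p:F derivative estimate}, i.e. $\frac{p(m)}{\underline q(m)}-1\le(0.42+o(1))(m-1)$, is what makes the composition of the two halves of the bootstrap a genuine contraction, forcing $\rho_P=\rho_Q=1$. \textbf{I expect this contraction step to be the main obstacle}: the oscillation of $P/x$ over a shrinking window can only be controlled a priori by the \emph{global} ratio $\rho_P$, so each half-step of the bootstrap can first enlarge the oscillation estimate before the dominating-function bound shrinks it, and the numerical constants have to be arranged so that the net effect is a contraction.

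Once $\rho_P=\rho_Q=1$, the limits $L:=\lim_{x\to\infty}\frac{P(x)}{x}$ and $c:=\lim_{a\to\infty}aQ(a)$ exist, $m_g,m_G\to1$, hence $\tilde d(g),\tilde d(G)\to\frac{1}{2\pi}$, and the first display gives $\tilde a(g)^2=\frac{2\pi}{P(x_a)}(1+o(1))$ and $c=1/L$. To pin the scale, use $\int_1^a Q(s)\,ds=\log a+O(1)$ together with $Q(s)=\frac{c}{s}(1+o(1))$, which forces $c=1$ and hence $L=1$; this is \eqref{eq3.11} and \eqref{eq3.10}. Finally $\frac{h_a(x_a)^2}{a^2}=\tilde a(g)^2(1+o(1))=\frac{2\pi}{P(x_a)}(1+o(1))=\frac{2\pi}{a}(1+o(1))$ gives \eqref{eq3.9}, which completes the proof.
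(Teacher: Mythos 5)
Your reduction of both second derivatives to the $\tilde d$-framework via \eqref{e-g} and \eqref{e-gg}, and your endgame (pinning the constants through $u(x)\sim x$ and $\lambda'(a)=\log a+O(1)$ once the limits exist) are consistent with what the paper does. The genuine gap is exactly the step you flag yourself: the contraction. In your scheme the only quantitative link between the two sides is the product relation $Q(a)P(x_a)=2\pi\,\tilde d\,\zeta\,(1+o(1))$, with the window oscillations $m_g,m_G$ and the factors $\zeta_g,\zeta_G$ controlled only by the \emph{global} ratios $\rho_P,\rho_Q$. Feeding this back yields a new bound of the shape $\rho_Q\le\frac{p(\rho_P)}{\underline q(\rho_P)}\,\rho_P^{3}$ (upper bound $2\pi p(\rho_P)\rho_P/\liminf\frac{P}{x}$ against lower bound $2\pi \underline q(\rho_P)\rho_P^{-1}/\limsup\frac{P}{x}$), which is strictly larger than $\rho_P$ whenever $\rho_P>1$: the loop expands rather than contracts, and no sharpening of Proposition \ref{p:F derivative estimate} can repair this, because the obstruction is the stray factor $\rho_P^{3}$, not the size of $p/\underline q$. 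Your averaged identities only locate $1$ inside the liminf/limsup intervals; they do not shrink them.

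What the paper adds, and what is missing from your proposal, is a mechanism that removes the stray factor of the old ratio: the auxiliary function $\rho(a)=\log\frac{h_a(x_a)}{\sqrt a}$, with $\rho'(a)=\lambda'(a)-t_a-\frac{1}{2a}$ and $\rho''(a)=\lambda''(a)-\bigl(\frac{d^2}{dt^2}\log f(x_a)\bigr)^{-1}+\frac{1}{2a^2}$. At a local minimum $b$ of $\rho$ lying below $\liminf+\epsilon$ one has simultaneously (i) $\rho'(b)=0$, which forces $n_{x_b}=b+O(1)$ and hence $\Delta_b=e^{O(1/b)}$, killing the correction in \eqref{e-gg}, and (ii) $\rho''(b)\ge 0$, i.e.\ the exact anchor $\lambda''(b)\cdot\frac{d^2}{dt^2}\log f(x_b)\ge 1-O(b^{-2})$; combined with the dominating-function upper bounds at that point this gives $\liminf_{a}h_a(x_a)/\sqrt a\ge (pp')^{-1/4}$, and the mirror argument at local maxima gives $\limsup_a h_a(x_a)/\sqrt a\le (qq')^{-1/4}$. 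Plugging these back into \eqref{e-g} and \eqref{e-gg} produces new ratio bounds of the form $\frac{p'}{q'}\sqrt{\frac{pp'}{qq'}}$ (with a $\Delta_a$ correction controlled via Proposition \ref{prop-tbar}), i.e.\ functions of $p,q,p',q'$ alone, with no leftover power of the old $m$; it is this map that the paper iterates, numerically $67$ times from $m=10^{10}$ down to $m<1.01$ and then analytically, where $F'<0.84$ gives $I'(m)<0.86<1$ and hence a true contraction to $m=1$. Finally, because the extremum argument presupposes usable local extrema, the paper still needs a closing case analysis (when $\rho$ is eventually monotone, a density argument on $\rho'$ produces points where $\rho''$ is negligible) to extract the precise limits; your averaged identities could serve as an alternative normalization at that last stage, but only after the oscillation ratios have been driven to $1$, which your outline does not achieve.
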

	\begin{proof}
		Let $A=\limsup_{a\to \infty}\frac{h_a(x_a)}{\sqrt{a}}$, $B=\liminf_{a\to \infty}\frac{h_a(x_a)}{\sqrt{a}}$. We first assume
		that $A>B$ and draw a contradiction. For all $ \epsilon>0$, there exists an $ \alpha$ such that for $a\geq \alpha$, we have
		$$B-\epsilon<\frac{h_a(x_a)}{\sqrt{a}}<A+\epsilon.$$
		Denote $\rho(a)=\log \frac{h_a(x_a)}{\sqrt{a}}$. Let $b$ be a local minimum point of $\rho$ such that $\rho(b)<\log(B+\epsilon)$.
		Then  we have $\rho'(b)=0$ 
		and $\rho''(b)\geq 0$. We can calculate
		$$\rho'(a)=\frac{f'(x_a)}{f(x_a)}\frac{dx_a}{da}+\frac{d\lambda(a)}{da}-t_a-\frac{a}{x_a}\frac{dx_a}{da}
		-\frac{1}{2a}=\frac{d\lambda(a)}{da}-t_a-\frac{1}{2a},$$ 
		since $\frac{a}{x_a}=\frac{f'(x_a)}{f(x_a)}$. Therefore, 
		$$\rho''(a)=\frac{d^2}{da^2}\lambda(a)-\frac{1}{\frac{d^2}{dt^2}\log f(x_a)}+\frac{1}{2a^2}$$
		So we have $\tilde t_b-t_b=\frac{1}{2b}$ 
		and at $a=b$, $$\frac{d^2}{da^2}\lambda(a)-\frac{1}{\frac{d^2}{dt^2}\log f(x_a)}\geq -\frac{1}{2a^2}.$$
		Since $n_{\tilde x_a}=a$, we have $n_{x_a}=a+O(1)$ at $a=b$. So $$\Delta_b=e^{O(1/b)}.$$
		Suppose we have positve numbers $M_1,M_2,M_1',M_2'$ such that 
		\begin{eqnarray*}
			M_1&<&a\frac{d^2}{da^2}\lambda(a)<M_2\\
			M_1'&<&\frac{\frac{d^2}{dt^2}\log f(x)}{x}<M_2'
		\end{eqnarray*}
		Let $m=\frac{M_2}{M_1}$ and $m'=\frac{M'_2}{M'_1}$
		Let $$\tilde{q}(m)=\max_{c_1\geq 0,c_2\geq 0}\tilde{d}(g_{m,c_1,c_2}),$$
		whose existence is guanrenteed by proposition \ref{prop-minmax}. 
		For simplify, we write $p=p(m)$, $p'=p(m')$, $q=\tilde{q}(m)$ and $q'=\tilde{q}(m')$.
		Then \begin{eqnarray*}
			\frac{d^2}{da^2}\lambda(a)<\frac{p(B+\epsilon)^2}{a}(1+\epsilon),
		\end{eqnarray*}
		 and $$\frac{d^2}{dt^2}\log f(x_a)<p'a(B+\epsilon)^2(1+\epsilon),$$
		 for $a=b$ large enough. Therefore at $a=b$, we have $pp'(B+\epsilon)^4(1+\epsilon)^2>1-O(\frac{1}{b})$. 
		 Since $\epsilon$ is arbitrary, we get that $$B\geq (pp')^{-1/4}.$$ 
		 We can repeat this argument for a local maximum to get that
		 $$A\leq (qq')^{-1/4}.$$ 
		 So we get that for $a$ large enough, we have 
		 $$(pp')^{-1/4}-\epsilon<\frac{h_a(x_a)}{\sqrt{a}}<(qq')^{-1/4}+\epsilon.$$
		 Then by theorem \ref{thm-dtilde} and  formula \ref{e-g}, we have
		 \begin{equation}\label{e-lambda1}
			q'((pp')^{-1/4}-\epsilon)^2<a\frac{d^2}{da^2}\lambda(a)< p'((qq')^{-1/4}+\epsilon)^2,
		 \end{equation}
		 for $a$ large enough.
		And by theorem \ref{thm-dtilde} and  formula \ref{e-gg}, we have 
		\begin{equation}\label{e-Delta}
			\frac{1}{\Delta_a^2}q((pp')^{-1/4}-\epsilon)^2<\frac{\frac{d^2}{dt^2}\log f(x_a)}{x_a}<\frac{1}{\Delta_a^2}p((qq')^{-1/4}+\epsilon)^2,
		\end{equation}
		 for $a$ large enough. Since $\Delta_a\geq 1$, we have $$\frac{\frac{d^2}{dt^2}\log f(x_a)}{x_a}<p((qq')^{-1/4}+\epsilon)^2.$$
		 To better control the lower bound, we notice that by formula \ref{e-lambda1}, we have  
		 $$\int_{i\geq 0}e^{-G(i)}di>(1+O(\frac{1}{a}))\sqrt{\frac{2\pi }{ \gamma }},$$
		where $\gamma=p'((qq')^{-1/4}+\epsilon)^2$. Therefore we get 
		\begin{equation}
			\frac{2\pi q}{\gamma}<\frac{\frac{d^2}{dt^2}\log f(x_a)}{x_a}<p((qq')^{-1/4}+\epsilon)^2.
		\end{equation}

		Let $M_2=M_2'=2*10^6, M_1=M_1'=1/5000$, namely $m=m'=10^{10}$, then by theorem \ref{convex-general}  we can let $p=p'=2$ and $q=q'=\frac{1}{12}$ and start a process of iteration as follows. We let 
		$$\bar{m}=\frac{p'}{q'}\sqrt{\frac{pp'}{qq'}}+\epsilon, $$ 
		$$\bar{m}'=\frac{pp'}{2\pi q^2q'} +\epsilon $$ 
		Then let \begin{eqnarray*}
			\bar{p}&=&p(\bar{m})\\
			\bar{p}'&=&p(\bar{m}')\\
			\bar{q}&=&\tilde{q}(\bar{m})\\
			\bar{q}'&=&\tilde{q}(\bar{m})
		\end{eqnarray*}
		Then we redefine $p=\bar{p}$, $p'=\bar{p}'$, $q=\bar{q}$ and $q'=\bar{q}'$ and repeat the preceding arguments to get new bounds for $\frac{h_a(x_a)}{\sqrt{a}}$, $a\frac{d^2}{da^2}\lambda(a) $ and $\frac{\frac{d^2}{dt^2}\log f(x)}{x}$ and so on. 
		We use the software Mathematica to compute $p(m)$ and $\tilde{q}(m)$ and iterate the process 67 times to reach the point where $1<m<m'<1.01$, $0.99<M_1'<M1<M_2<M_2'<1.01$ and $0.158818<q'<q<p<p'<0.159493$. In order to use proposition \ref{p:F derivative estimate} we now use the lower bound in formula \ref{e-Delta} for $\frac{\frac{d^2}{dt^2}\log f(x_a)}{x_a}$. 
		Since $$a-n_{x_a}=\bar{t}(G)+O(\frac{1}{\sqrt{a}}\sqrt{a}),$$ by proposition \ref{prop-tbar} we have
		$$|a-n_{x_a}|^2<\frac{2.1a}{\pi }(1-\frac{1}{\sqrt{m}})^2.$$
		Therefore, since $\lambda'(n_{x_a})=\log x_a$, $$1\leq \Delta_a<e^{\frac{1}{2}\frac{2.2}{\pi}(1-\frac{1}{\sqrt{m}})^2}.$$
		Now we let $M_1=M_1'$ and $M_2=M_2'$. We let $$p=p'=p(m)$$, and let  $$q=q'=q(m)-\frac{4}{\pi^2}\frac{(1-\sqrt{\frac{1}{m}})^2}{(1+\sqrt{\frac{1}{m}})^2} ,$$ 
		and repeat arguments on local extremal points of $\frac{h_a(x_a)}{\sqrt{a}}$ to get 
		$$B\geq \frac{1}{\sqrt{p}}, \quad A\leq \frac{1}{\sqrt{q}},$$
		and  \begin{equation}\label{e-lambda2}
			q(p^{-1/2}-\epsilon)^2<a\frac{d^2}{da^2}\lambda(a)< p(q^{-1/2}+\epsilon)^2.
		 \end{equation}
		Then we have 
		\begin{equation}
			e^{-\frac{1.1}{\pi}(1-\frac{1}{\sqrt{m}})^2}q(p^{-1/2}-\epsilon)^2<\frac{\frac{d^2}{dt^2}\log f(x_a)}{x_a}<p(q^{-1/2}+\epsilon)^2.
		\end{equation}
		
		Then we let $$\bar{m}=\bar{m}'=\frac{p^2}{q^2}e^{\frac{1.1}{\pi}(1-\frac{1}{\sqrt{m}})^2}+\epsilon.$$
		Plugging in the formulas for $p$ and $q$, we get 
		$$\bar{m}=F(m)H^{-2}(m)Q(m)+\epsilon,$$
	where  $H(m)=1-\frac{4}{q(m)\pi^2}\frac{(1-\alpha)^2}{(1+\alpha)^2}$ and $Q(m)=e^{\frac{1.1}{\pi}(1-\alpha)^2}$. Denote by $I(m)=F(m)H^{-2}(m)Q(m)$. Clearly $I(1)=1$, so in order to show that the iteration will force $m\to 1$, we only need to show that $I'(m)<1$ for $1\leq m<1.01$. We have shown that $0<F'(m)<0.84$.   
When $1\leq m<1.01$, $0.99998 \leq H(m)\leq 1$. 
We have
$$H'(m)=\frac{4(1-\alpha)^2[(1+\alpha)^2q'-q(1+\alpha)\alpha^3]}{\pi^2q^2(1+\alpha)^4}-\frac{4(1-\alpha)\alpha^3}{9\pi^2(1+\alpha)^2}.$$
So 
$$-0.000067<H'(m)\leq 0.$$
Therefore 
$$0\leq (H^{-1}(m))'<0.00007.$$   
When $1\leq m<1.01$, $Q(m)<1.00001$ and $$Q'(m)=\frac{1.1}{\pi}(1-\alpha)\alpha^3e^{\frac{1.1}{\pi}(1-\alpha)^2}<0.0018.$$
So, when $1\leq m<1.01$, 
$$0<I'(m)'<0.86.$$
 So this iteration will indeed end up with the limit situation, namely $p=q=\frac{1}{2\pi}$ and 
		$M_1=M_2=1$. 
		So this gives a contradiction. 
	This proves that we must have $A=B$.

		For all  $\epsilon>0$, there exists $ N$ such that for $a\geq N$, we have
		$$A-\epsilon<\frac{h_a(x_a)}{\sqrt{a}}<A+\epsilon.$$
		Then if $\rho(a)$ is not monotone for $a$ large enough, then the same argument as above actually proves \eqref{eq3.9}, \eqref{eq3.10} and  \eqref{eq3.11}. If $\rho(a)$ is monotone, we assume, for example, 
		that $\rho'(a)=\tilde t_a-t_a-\frac{1}{2a}\geq 0$ for $a$ large enough.
		Then $\int_{a_0}^\infty \rho'(a)da<\infty$, so the asymptotic density of the set $E_T=\{a\leq T|\rho'(a)\geq \frac{1}{a} \}$
		$$\lim_{T\to \infty}\frac{\lambda(E_T)}{T}=0,$$
		where $\lambda$ is the Lebesgue measure.
		Since $\tilde t_a=t_{a+1}+O(\frac{\log a}{\sqrt{a}})$, we have $\lim_{a\to \infty}\rho'(a)=0$. 
		There are two possibilities:
		\begin{itemize}
			\item  $\rho'(a)$ is monotone, then we have $\rho'(a)=o(\frac{1}{a})$ and there is a sequence $a_j\to\infty$ so that $\rho''(a_j)=O(\frac{1}{a_j\log a_j})$; 
			\item $\rho'(a)$ is not monotone, then around a local minimum where $|\rho'(a)|<\frac{1}{a}$, we have $\rho''(a)=0$.
		\end{itemize}
		In both cases, we can apply the above argument to get the claimed estimates in the theorem.
		The case $\rho'(a)=\tilde t_a-t_a-\frac{1}{2a}< 0$ is similar.

	\end{proof}
	As a direct application, we get that $\lim_{x\to \infty}\frac{du}{dx}=1$. Then just as Proposition \ref{prop-tatilde}, we have the following:
	\begin{cor}\label{cor-ta}
		$\tilde t_a-t_{a+1}=o(\frac{1}{\sqrt{a}})$
	\end{cor}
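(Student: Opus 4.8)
\medskip
We follow the scheme of the proof of Proposition~\ref{prop-tatilde}, but feed in the sharp asymptotics of Theorem~\ref{main-1} in place of the rough bounds of Lemma~\ref{cor-13}. As recorded at the start of Section~4, if $g(t)=\tilde\phi(t)$ denotes the convex function with $\tilde\phi(t_{a+1})=\tilde\phi'(t_{a+1})=0$ and $\tilde\phi''(t)=\frac{d^2}{dt^2}\log f(e^t)$, then $\frac{dx}{h_a(x)}=\frac{x_{a+1}}{h_a(x_{a+1})}e^{-\tilde\phi(t)}\,dt$, and by \eqref{e-g} one has $\frac{x_{a+1}}{h_a(x_{a+1})}\int_{-\infty}^{\infty}e^{-\tilde\phi(t)}\,dt=1$, so that
$$\tilde t_a-t_{a+1}=\frac{x_{a+1}}{h_a(x_{a+1})}\int_{-\infty}^{\infty}(t-t_{a+1})\,e^{-\tilde\phi(t)}\,dt.$$
By \eqref{eq3.9}, the relation $h_a(x_{a+1})=(1+O(1/a))h_a(x_a)$, and $x_a=a+O(\sqrt a\log a)$, the prefactor is $O(\sqrt a)$; hence it suffices to prove that the integral above is $o(1/a)$.

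First, exactly as in the proof of Proposition~\ref{prop-tatilde}, Laplace's method together with the lower bound $\tilde\phi''(t)\gtrsim e^t$ from Lemma~\ref{cor-13} gives $\int_{|t-t_{a+1}|>C\log a/\sqrt a}|t-t_{a+1}|\,e^{-\tilde\phi(t)}\,dt=\epsilon(a)$, so this range is harmless. Next fix $K>0$. On the shrinking window $|s|:=|t-t_{a+1}|\le K/\sqrt a$, equation \eqref{eq3.11} gives $\tilde\phi''(t_{a+1}+s)=x_{a+1}(1+\xi_a(s))$ with $\omega_a:=\sup_{|s|\le K/\sqrt a}|\xi_a(s)|\to 0$ as $a\to\infty$ (using $e^{t_{a+1}+s}/x_{a+1}=1+O(K/\sqrt a)$ together with $\sup_{x\ge x_{a+1}/2}|\frac{d^2}{dt^2}\log f(x)/x-1|\to 0$, since $x_{a+1}\to\infty$). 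Integrating twice from $t_{a+1}$,
$$\tilde\phi(t_{a+1}+s)=\tfrac12\,x_{a+1}s^2\,(1+\zeta_a(s)),\qquad \sup_{|s|\le K/\sqrt a}|\zeta_a(s)|\le\omega_a .$$

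Now pair $s$ with $-s$: using $|e^{-A}-e^{-B}|\le |A-B|\,e^{-\min(A,B)}$ and, for $a$ large, $\min\ge\tfrac14 x_{a+1}s^2$, we get
$$\Bigl|\int_{|s|\le K/\sqrt a}s\,e^{-\tilde\phi(t_{a+1}+s)}\,ds\Bigr|\le \int_0^\infty \omega_a\,x_{a+1}\,s^3\,e^{-x_{a+1}s^2/4}\,ds=O\!\Bigl(\frac{\omega_a}{x_{a+1}}\Bigr)=o\!\Bigl(\frac1a\Bigr).$$
On the annulus $K/\sqrt a<|s|\le C\log a/\sqrt a$ the displayed estimate for $\tilde\phi$ gives $\tilde\phi(t_{a+1}+s)\ge\tfrac14 x_{a+1}s^2$, so $\int_{K/\sqrt a<|s|\le C\log a/\sqrt a}|s|\,e^{-\tilde\phi}\,ds\le 2\int_{K/\sqrt a}^{\infty}s\,e^{-x_{a+1}s^2/4}\,ds=O\bigl(\tfrac1a e^{-cK^2}\bigr)$ for a fixed $c>0$. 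Multiplying the three contributions by the $O(\sqrt a)$ prefactor,
$$|\tilde t_a-t_{a+1}|\le \frac{1}{\sqrt a}\bigl(o_a(1)+C_0e^{-cK^2}\bigr)+\epsilon(a).$$
Hence $\limsup_{a\to\infty}\sqrt a\,|\tilde t_a-t_{a+1}|\le C_0e^{-cK^2}$; letting $K\to\infty$ yields $\tilde t_a-t_{a+1}=o(1/\sqrt a)$.

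The crux is the core estimate. Proposition~\ref{prop-tatilde} only bounds $\int_{|s|\le C\log a/\sqrt a}s\,e^{-\tilde\phi}\,ds$ trivially by $O(\log a/\sqrt a)$, whereas here one must replace $\tilde\phi$ by the \emph{symmetric} quadratic $\tfrac12 x_{a+1}s^2$ up to a multiplicative error that is uniformly small on the precise scale $1/\sqrt a$ on which $e^{-\tilde\phi}$ is concentrated; it is exactly the sharp second-derivative asymptotic \eqref{eq3.11} that supplies this, and the delicate point is keeping the error uniform over a window that still carries essentially all the mass of $e^{-\tilde\phi}$.
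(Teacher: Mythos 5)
Your proof is correct and follows essentially the same route as the paper: both rest on the sharp asymptotic \eqref{eq3.11} (via Theorem \ref{main-1}) to approximate $g$ by the symmetric quadratic $\tfrac12 x_{a+1}(t-t_{a+1})^2$ with $o(1)$ relative error on the scale $a^{-1/2}$, so that the two sides of the first-moment integral cancel to leading order and only an $o(1/a)$ remainder survives against the $O(\sqrt a)$ prefactor. Your symmetrized pairing of $s$ with $-s$ is just a more explicit implementation of the cancellation the paper performs by computing each one-sided Gaussian moment and subtracting.
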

	\begin{proof}
		With the notation used in the proof of the preceding theorem, we have $$\int_{t_{a+1}}^{\infty}(t-t_{a+1})e^{-g(t)}dt=(1+o(1))a^{-1/2}\int_{0}^{\infty}ye^{-y^2/2}dy$$ and similarly $$\int_{-\infty}^{t_{a+1}}(t-t_{a+1})e^{-g(t)}dt=-(1+o(1))a^{-1/2}\int_{0}^{\infty}ye^{-y^2/2}dy.$$ Summing up, we get the corollary.
	\end{proof}
	\begin{cor}\label{cor-f}
		For $x$ large enough, let $a(x)$ be the number satisfying $x=\tilde x_a$. Then 
		$$f(x)=(1+o(1))c_{[a]}x^{[a]}\sqrt{2\pi x},$$
		where $[a]$ is the round down of $a$.
	\end{cor}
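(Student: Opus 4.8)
The plan is to begin from the exact identity $f(x)=c(a)\,x^{a}\,h_a(x)$ (which is just the definition $h_a(x)=f(x)/(c(a)x^{a})$ rewritten) and, for $x=\tilde x_a$ with $a$ large, to determine the asymptotics of the two factors separately. Thus I would prove: (i) $h_a(\tilde x_a)=(1+o(1))\sqrt{2\pi x}$, and (ii) $c(a)\,x^{a}=(1+o(1))\,c_{[a]}\,x^{[a]}$. Multiplying these and recalling $x=\tilde x_a$ gives the corollary.

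For (i), the point is that $h_a$, as a function of $t=\log x$, attains its minimum at $t_a$: indeed $\log h_a(x)=\log f(x)-\log c(a)-at$, so $\frac{d}{dt}\log h_a(x_a)=u(x_a)-a=0$ and $\frac{d^2}{dt^2}\log h_a(x)=\frac{d^2}{dt^2}\log f(x)>0$. I would first bound $|\tilde t_a-t_a|$: Corollary \ref{cor-ta} gives $\tilde t_a-t_{a+1}=o(a^{-1/2})$, while $t_{a+1}-t_a=O(a^{-1})$ since $\frac{da}{dt_a}=\frac{d^2}{dt^2}\log f(x_a)$ (as noted in the proof of Theorem \ref{main-1}), which by \eqref{eq3.11} is $(1+o(1))a$; hence $|\tilde t_a-t_a|=o(a^{-1/2})$. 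On the $o(a^{-1/2})$-neighbourhood of $t_a$ the variable $x=e^{t}$ changes only by a factor $1+o(1)$, so $\frac{d^2}{dt^2}\log f=O(a)$ there (Proposition \ref{prop-big-upp}), and a second-order Taylor expansion of $\log h_a$ in $t$ about $t_a$ yields
$$\bigl|\log h_a(\tilde x_a)-\log h_a(x_a)\bigr|\le\tfrac12\,O(a)\,|\tilde t_a-t_a|^{2}=o(1),$$
whence $h_a(\tilde x_a)=(1+o(1))h_a(x_a)$. Now \eqref{eq3.9} gives $h_a(x_a)=(1+o(1))\sqrt{2\pi a}$, and $\tilde x_a=x_a(1+o(1))=a(1+o(1))$, so $h_a(\tilde x_a)=(1+o(1))\sqrt{2\pi\tilde x_a}$, which is (i).

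For (ii), write $a=[a]+\{a\}$ with $\{a\}\in[0,1)$. Using $\lambda'(s)=\tilde t_s=\log\tilde x_s$ together with $c(s)=e^{-\lambda(s)}$ and $c([a])=c_{[a]}$ (valid since $[a]$ is a large positive integer), one computes
$$\log\frac{c(a)\,x^{\{a\}}}{c_{[a]}}=\{a\}\log\tilde x_a-\bigl(\lambda(a)-\lambda([a])\bigr)=\int_{[a]}^{a}\bigl(\tilde t_a-\tilde t_s\bigr)\,ds .$$
For $s\in[\,[a],a\,]$ we have $0\le\tilde t_a-\tilde t_s=\int_{s}^{a}\lambda''(\tau)\,d\tau\le\max_{[\,[a],a\,]}\lambda''=O(a^{-1})$ by Proposition \ref{prop-big-upp}, so the last integral is $O(a^{-1})=o(1)$. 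Therefore $c(a)\,x^{a}=c(a)\,x^{\{a\}}\,x^{[a]}=(1+o(1))\,c_{[a]}\,x^{[a]}$, proving (ii); combining (i), (ii) with the identity $f(x)=c(a)x^{a}h_a(x)$ gives $f(x)=(1+o(1))\,c_{[a]}\,x^{[a]}\sqrt{2\pi x}$.

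I expect the only real obstacle to be step (i): what is needed is that $a\,|\tilde t_a-t_a|^{2}$ be genuinely $o(1)$, which forces one to use the refined closeness $\tilde t_a-t_a=o(a^{-1/2})$ of Corollary \ref{cor-ta} (the cruder Section 2 estimate $\tilde t_a-t_a=O(\log a/\sqrt a)$ of Proposition \ref{prop-tatilde} would only yield $O(\log^{2}a)$ here), together with the sharp constant $\sqrt{2\pi}$ in \eqref{eq3.9}. Step (ii), by contrast, is a routine integration once the bound $\lambda''=O(1/a)$ is in hand.
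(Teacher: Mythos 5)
Your proof is correct and follows essentially the same route as the paper: first $f(\tilde x_a)=(1+o(1))c(a)\tilde x_a^{\,a}\sqrt{2\pi \tilde x_a}$ via $h_a(\tilde x_a)=(1+o(1))h_a(x_a)=(1+o(1))\sqrt{2\pi a}$ (using Corollary \ref{cor-ta} together with \eqref{eq3.9}), and then the replacement of $c(a)x^{a}$ by $c_{[a]}x^{[a]}$ from $\lambda''=O(1/a)$, exactly the two steps the paper invokes. You merely supply the details the paper leaves implicit, so no changes are needed.
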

	\begin{proof}
		Since $h_a(x)=\sqrt{2\pi a}(1+o(1))$, we have $f(x)=(1+o(1))c_{a}x^{a}\sqrt{2\pi x}$. But since $\frac{d^2}{da^2}\lambda(a)=(1+o(1))\frac{1}{a}$, we have $c_{[a]}x^{[a]}=(1+O(\frac{1}{a}))c_{a}x^{a}$.
	\end{proof}
	\begin{cor}\label{cor-dxa}
		We have
		$$\frac{d\tilde x_a}{da}=1+o(1).$$
	\end{cor}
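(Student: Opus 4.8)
The plan is to differentiate the relation $n_{\tilde x_a}=a$, which defines $\tilde x_a$ implicitly through the function $\lambda$. Recall that $\tilde t_a = \log \tilde x_a = \lambda'(a)$ by definition, so $\frac{d\tilde t_a}{da}=\lambda''(a)$, and hence
\begin{equation*}
\frac{d\tilde x_a}{da} = \tilde x_a\,\frac{d\tilde t_a}{da} = \tilde x_a\,\lambda''(a).
\end{equation*}
So the corollary is really a restatement of the asymptotics already in hand: by \eqref{eq3.10} we have $\lambda''(a) = (1+o(1))/a$, and by the quantitative estimate $\tilde x_a = a + O(\sqrt a \log a)$ (or even just $\tilde x_a/a \to 1$, which follows from $n_{\tilde x_a}=a$ together with $x_a = a+O(\sqrt a\log a)$ and Proposition \ref{prop-tatilde}), we get $\tilde x_a\,\lambda''(a) = (1+o(1))\cdot a \cdot (1+o(1))/a = 1+o(1)$.

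The steps, in order, would be: first record that $\tilde t_a = \lambda'(a)$ so that $\frac{d}{da}\tilde t_a = \lambda''(a)$; second, use $\tilde x_a = e^{\tilde t_a}$ to write $\frac{d\tilde x_a}{da} = \tilde x_a \lambda''(a)$; third, invoke \eqref{eq3.10} for the factor $\lambda''(a) = (1+o(1))a^{-1}$ and the estimate $\tilde x_a = (1+o(1))a$ (which is immediate since $\tilde x_a = x_a + O(1)$ by the Lipschitz bound $du/dx = O(1)$ from Proposition \ref{prop-big-upp} together with $x_a = a + O(\sqrt a \log a)$, or alternatively from Corollary \ref{cor-ta}); and finally multiply to conclude $\frac{d\tilde x_a}{da} = 1+o(1)$.

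I do not expect any real obstacle here: this is a one-line consequence of the main theorem of the section once one remembers the defining relations $\tilde t_a = \lambda'(a)$ and $\tilde x_a = e^{\tilde t_a}$. The only point requiring a modicum of care is making sure the $o(1)$ errors from $\lambda''(a) = (1+o(1))/a$ and from $\tilde x_a/a = 1+o(1)$ genuinely combine to an $o(1)$ error in the product rather than being amplified, but since $\tilde x_a/a\to 1$ and $a\lambda''(a)\to 1$ this is just the statement that the product of two sequences converging to $1$ converges to $1$.
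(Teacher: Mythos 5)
Your proof is correct and follows essentially the same route as the paper: differentiate $\log \tilde x_a = \lambda'(a)$ to get $\frac{d\tilde x_a}{da} = \tilde x_a \lambda''(a)$, then combine $a\lambda''(a)\to 1$ from \eqref{eq3.10} with $\tilde x_a/a \to 1$. The paper cites Corollary \ref{cor-ta} for the latter fact, but your alternative justifications are equally valid.
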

	\begin{proof}
		Since $\log \tilde x_a=\lambda'(a)$, 
		$$\frac{1}{\tilde x_a}\frac{d\tilde x_a}{da}=\frac{d\log \tilde x_a}{da}=\lambda''(a).$$
		By corollary \ref{cor-ta}, $\frac{\tilde x_a}{a}=1$, so we get the conclusion.
	\end{proof}

	\section{Proof of the uniqueness}
	Now we prove the uniqueness part in Theorem \ref{t:main}.
	Let $f_0=\sum_{i=0}^\infty c_ix^i$, $f_1=\sum_{i=0}^\infty \bar{c}_ix^i$ be two functions both satisfying
	$$\int_{0}^{\infty}\frac{c_ix^i}{f_0}dx=\int_{0}^{\infty}\frac{\bar{c}_ix^i}{f_1}dx=\left\lbrace \begin{array}{rr}
	1-\beta,& \quad i=0\\
	1,& i>0
	\end{array}\right.  $$
	The goal is to prove that $f_1$ is a scalar multiple of $f_0$. Suppose not, we will draw a contradiction below.  As before we may define the functions for $i\in [1, \infty)$
	$$c(i)=(\int_0^\infty \frac{x^i}{f_0}dx)^{-1},$$
	$$\bar c(i)=(\int_0^\infty \frac{x^i}{f_1}dx)^{-1}.$$ Then $c(i)=c_i$ and $\bar c(i)=\bar c_i$ when $i$ is a positive integer. We also define $c(0)=c_0$ and $\bar c(0)=\bar c_0$. 
	
	\
	
	For $t\in [0,1]$, let $f_t(x)=tf_1(x)+(1-t)f_0(x)$. For $i\in \{0\}\cup [1, \infty)$ we define $a_i(t)=t\bar{c}(i)+(1-t)c(i)$, and $$b_i(t)=\int_{0}^{\infty}\frac{a_i(t)x^i}{f_t(x)}dx.$$ Then
	$$b_i'(t)=\int_{0}^{\infty}\frac{a'_i(t)f_t-a_i(t)f'_t}{f^2_t(x)}x^idx $$
	and $$b''_i(t)=\int_{0}^{\infty}\frac{2x^i}{f^3_t(x)}(a_i(t)f'_t-a_i'(t)f(t))f'_tdx.$$
	\begin{lem}
	We have for all $i\in \{0\}\cup[1, \infty)$  that $b_i'(0)<0<b_i'(1)$.
	\end{lem}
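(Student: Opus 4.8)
The plan is to evaluate $b_i'(0)$ and $b_i'(1)$ in closed form and to recognise in each a Cauchy--Schwarz inequality whose equality case is exactly the proportionality $f_1=(\text{const})f_0$ that we have assumed to fail.

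First I would record the boundary values, mostly for orientation: since $a_i(0)=c(i)$ and $f_0$ solves the balance equation, $b_i(0)=\int_0^\infty \frac{c(i)x^i}{f_0}dx$ equals $1$ for $i\ge 1$ and $1-\beta$ for $i=0$, and likewise $b_i(1)$ takes the same value; so $b_i$ returns to its starting point, which is why the endpoint derivatives should have the indicated signs. For the derivative itself, differentiating under the integral sign (justified by the growth bounds of Sections~2 and~4) and using $\tfrac{d}{dt}f_t=f_1-f_0$ and $a_i'(t)=\bar c(i)-c(i)$, the cross terms collapse and at $t=0$ one gets
\begin{equation*}
b_i'(0)=\int_0^\infty \frac{\bar c(i)f_0-c(i)f_1}{f_0^2}\,x^i\,dx
=\bar c(i)\int_0^\infty \frac{x^i}{f_0}dx-c(i)\int_0^\infty \frac{f_1 x^i}{f_0^2}dx .
\end{equation*}
The first integral is $1/c(i)$ for $i\ge 1$ and $(1-\beta)/c_0$ for $i=0$, by the definition of $c(i)$; convergence of the second is not an issue because $\log f_0(x)=x+o(x)$ and $\log f_1(x)=x+o(x)$ by Proposition~\ref{prop-start}, which makes the integrand behave like $e^{-x+i\log x+o(x)}$ near infinity.

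The key point is then the following. Introduce the finite measure $d\mu_i=\frac{x^i}{f_0}dx$ on $(0,\infty)$, whose total mass is $1/c(i)$ (resp.\ $(1-\beta)/c_0$), and note that $\int \frac{f_0}{f_1}\,d\mu_i=\int_0^\infty \frac{x^i}{f_1}dx=1/\bar c(i)$ (resp.\ $(1-\beta)/\bar c_0$) by the balance equation for $f_1$. Applying Cauchy--Schwarz to $\sqrt{f_1/f_0}$ and $\sqrt{f_0/f_1}$ against $\mu_i$ gives
\begin{equation*}
\Big(\int d\mu_i\Big)^2\le \Big(\int \frac{f_1}{f_0}\,d\mu_i\Big)\Big(\int \frac{f_0}{f_1}\,d\mu_i\Big),
\end{equation*}
i.e.\ $\int_0^\infty \frac{f_1 x^i}{f_0^2}dx=\int\frac{f_1}{f_0}d\mu_i\ge \bar c(i)/c(i)^2$ for $i\ge 1$ (with the obvious $1-\beta$ variant for $i=0$). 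Substituting this into the formula above yields $b_i'(0)\le 0$. Equality in Cauchy--Schwarz forces $f_1/f_0$ to be $\mu_i$-a.e.\ constant; since $f_0,f_1$ are real-analytic and strictly positive on $(0,\infty)$ and $\mu_i$ has full support there, this would mean $f_1\equiv(\text{const})f_0$, contrary to assumption. Hence $b_i'(0)<0$.

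Finally, $b_i'(1)>0$ follows from the identical computation with the roles of $f_0$ and $f_1$ interchanged (equivalently, replacing $t$ by $1-t$): one obtains $b_i'(1)=\bar c(i)\int_0^\infty \frac{f_0 x^i}{f_1^2}dx-c(i)\int_0^\infty \frac{x^i}{f_1}dx$, and the same Cauchy--Schwarz estimate, now against $\frac{x^i}{f_1}dx$, gives $b_i'(1)\ge 0$ with equality again only when $f_0,f_1$ are proportional. The only genuinely non-routine ingredient is this equality analysis, where the non-proportionality hypothesis together with real-analyticity does all the work; convergence of the integrals and differentiation under the integral sign are minor technical matters handled by the asymptotics already established.
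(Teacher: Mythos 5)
Your proposal is correct, but it proves the lemma by a genuinely different argument than the paper. The paper's proof first exploits the scaling freedom: replacing $f_1$ by $\lambda f_1$ reparametrizes the path $f_t$ by an increasing bijection of $[0,1]$, so it suffices to treat one $\lambda$, and choosing $\lambda$ so that $c(i)=\bar c(i)$ kills the term $a_i'(t)$ in $b_i''(t)$, leaving $b_i''(t)=\int_0^\infty \frac{2x^i a_i(t)(f_1-f_0)^2}{f_t^3}\,dx>0$; strict convexity together with $b_i(0)=b_i(1)=1-\beta\delta_{0i}$ then forces $b_i'(0)<0<b_i'(1)$. You instead evaluate the endpoint derivatives directly, $b_i'(0)=\bar c(i)\int_0^\infty \frac{x^i}{f_0}dx-c(i)\int_0^\infty \frac{f_1x^i}{f_0^2}dx$ and its mirror at $t=1$, and bound the cross integrals from below by Cauchy--Schwarz against the measures $x^i\,dx/f_0$ and $x^i\,dx/f_1$, with strictness coming from the equality case (constancy of $f_1/f_0$) being excluded by the standing non-proportionality assumption; this is legitimate, and in fact your Cauchy--Schwarz inequalities are exactly the strict inequalities \eqref{eqn-cabar2} and \eqref{eqn-cabar} that the paper deduces from the lemma, so your argument delivers those directly without the rescaling trick. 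The trade-off is minor: the paper's route needs no new integral beyond those in $b_i''$, while yours requires (and you correctly note) convergence of $\int f_1x^i/f_0^2\,dx$, which follows from $\log f_j(x)=x+o(x)$; both arguments use the non-proportionality of $f_0,f_1$ in an essential way to get strictness, yours through the Cauchy--Schwarz equality case and the paper's through $f_1-f_0\not\equiv0$ after normalization.
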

	\begin{proof}
		Notice that if we replace $f_1$ by $\lambda f_1$ for some $\lambda>0$ and let $g_t=t\lambda f_1+(1-t)f_0$, then $g_{t'}$ is a scalar multiple of $f_t$ for
	$$t'=\frac{t}{\lambda+(1-\lambda)t}.$$
	The map $t\mapsto t'$ is a bijection from $[0,1]$ to itself with a strictly positive derivative. This means that it suffices to prove $b_i'(0)<0$ for a particular choice of $\lambda$. For a given $i$ we can choose $\lambda$ so that $c(i)=\bar c(i)$. Then
	$$b''_i(t)=\int_{0}^{\infty}\frac{2x^ia_i(t)(f'_t)^2}{f^3_t(x)}dx>0.$$
	Since $b_i(0)=b_i(1)$, we have $b_i'(0)<0<b_i'(1)$. 
	
\end{proof}

 In particular, we have \begin{equation}\label{eqn-cabar2}
	\frac{\bar c(i)}{c(i)}(1-\beta \delta_{0i})<\int_{0}^{\infty}\frac{c(i)x^i}{f_0}\frac{f_1}{f_0}dx,
	\end{equation}
	and
	\begin{equation}\label{eqn-cabar}
	\frac{c(i)}{\bar c(i)}(1-\beta\delta_{0i})<\int_{0}^{\infty}\frac{\bar{c}(i)x^i}{f_1}\frac{f_0}{f_1}dx.
	\end{equation}

	From now on, we fix the normalization that $c_0=\bar{c}_0=1$.
	Then it is not hard to see the following:
	\begin{lem}\label{lem-Q}
		Consider the set $Q=\{\frac{c(i)}{\bar{c}(i)}| i\in \R, i\geq 1 \} $. Then we have
		$\sup Q\notin Q$ and $\inf Q\notin Q$.
		
	\end{lem}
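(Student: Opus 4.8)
I want to show that the set $Q=\{c(i)/\bar c(i)\mid i\in\R,\ i\ge 1\}$ attains neither its supremum nor its infimum. By symmetry (swap the roles of $f_0$ and $f_1$) it suffices to treat the supremum. Suppose, for contradiction, that $\sup Q = c(i_0)/\bar c(i_0)$ is attained at some $i_0\ge 1$; normalize (using the freedom $f_1\mapsto\lambda f_1$, which rescales all ratios $c(i)/\bar c(i)$ by the common factor $\lambda^{-1}$, hence does not change where the sup is attained) so that $c(i_0)=\bar c(i_0)$, i.e. $\sup Q = 1$ and $c(i)\le \bar c(i)$ for all $i\ge 1$. Since we also fixed $c_0=\bar c_0=1$, the inequality $c(i)\le\bar c(i)$ in fact holds for all $i\in\{0\}\cup[1,\infty)$.

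First I would feed this pointwise inequality on the coefficients into inequality \eqref{eqn-cabar2} applied at $i=i_0$. Because $c(i)\le\bar c(i)$ for every $i$, the function $f_1(x)=\sum_i \bar c(i)x^i$ dominates $f_0(x)=\sum_i c(i)x^i$ coefficientwise, so $f_1(x)\ge f_0(x)$ for all $x\ge 0$, and thus $f_1/f_0\ge 1$ pointwise. Plugging into \eqref{eqn-cabar2} with $\bar c(i_0)=c(i_0)$ gives
$$
1-\beta\delta_{0i_0}\;<\;\int_0^\infty \frac{c(i_0)x^{i_0}}{f_0}\cdot\frac{f_1}{f_0}\,dx,
$$
while the normalization of $f_0$ says $\int_0^\infty \frac{c(i_0)x^{i_0}}{f_0}\,dx = 1-\beta\delta_{0i_0}$. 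So the point is to show $f_1/f_0$ is \emph{not} identically $1$ unless $f_1\equiv f_0$; indeed if $f_1\not\equiv f_0$ then $f_1>f_0$ somewhere, hence (by continuity and $f_1\ge f_0$) on an open set, which makes the displayed strict inequality automatically true — so this alone gives no contradiction. The contradiction must come from comparing the \emph{growth} of the two sides, i.e. from the asymptotics established in Section 4.

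The real mechanism, and the main obstacle, is to leverage Corollary \ref{cor-f} together with the refined asymptotics $\lambda''(a)=(1+o(1))/a$ to control the ratio $\bar c(i)/c(i)$ for large $i$ and derive that $c(i)\le\bar c(i)$ for all $i$, combined with $f_1\ge f_0$, forces $f_1\equiv f_0$. Here is the idea: from Corollary \ref{cor-f}, for $f_0$ we have $f_0(x)=(1+o(1))c_{[a]}x^{[a]}\sqrt{2\pi x}$ where $x=\tilde x_a$, and likewise $f_1(x)=(1+o(1))\bar c_{[\bar a]}x^{[\bar a]}\sqrt{2\pi x}$ with $x=\tilde{\bar x}_{\bar a}$; since $\tilde x_a\sim a$ and $\tilde{\bar x}_{\bar a}\sim\bar a$ the two indices agree to leading order, so the ratio $f_1(x)/f_0(x)$ is governed, up to $1+o(1)$, by $\bar c_{[a]}/c_{[a]}$ as $x\to\infty$. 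If $c(i)\le\bar c(i)$ with strict inequality persisting (equivalently $\sup Q=1$ but $f_1\not\equiv f_0$), then one should be able to show $\bar c(i)/c(i)$ stays bounded away from... no — rather, I expect one shows $f_1/f_0\to L$ for some $L\ge 1$, and then plugs this limiting behaviour back into \eqref{eqn-cabar2} and the analogue \eqref{eqn-cabar}: running both inequalities and using that the Laplace-type integral $\int_0^\infty \frac{c(i)x^i}{f_0}\frac{f_1}{f_0}\,dx$ concentrates near $x=\tilde x_i$ where $f_1/f_0\approx \bar c_{[i]}/c_{[i]}$, one gets $\frac{\bar c(i)}{c(i)}(1-\beta\delta_{0i})<(1+o(1))\frac{\bar c_{[i]}}{c_{[i]}}(1-\beta\delta_{0i})$ and symmetrically, which pins down the ratio and ultimately forces $f_1=f_0$. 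The delicate point — and where I expect to spend the most effort — is making the concentration estimate for $\int_0^\infty \frac{c(i)x^i}{f_0}\cdot\frac{f_1}{f_0}\,dx$ uniform enough in $i$ that the $o(1)$ errors do not swamp the strict inequalities; this is exactly the kind of bookkeeping that the dominating-function machinery of Section 3 and the sharp constants of Theorem \ref{main-1} were built to supply.

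Let me restate the skeleton cleanly. \textbf{Step 1:} Reduce to proving $\sup Q\notin Q$ by the $f_0\leftrightarrow f_1$ symmetry. \textbf{Step 2:} Assume the sup is attained at $i_0$; renormalize so $c(i_0)=\bar c(i_0)=\sup Q=1$, giving $c(i)\le\bar c(i)$ for all $i\in\{0\}\cup[1,\infty)$ and hence $f_1\ge f_0$ on $[0,\infty)$. \textbf{Step 3:} If $f_1\equiv f_0$ we are done (they are scalar multiples), so assume $f_1\not\equiv f_0$; then $f_1>f_0$ on a nonempty open set, and in particular some coefficient inequality is strict. \textbf{Step 4:} Use Corollary \ref{cor-f}, $\lambda''(a)\sim 1/a$, and $\tilde x_a\sim a$ to show $f_1(x)/f_0(x)=(1+o(1))\,\bar c_{[a]}/c_{[a]}$ as $x=\tilde x_a\to\infty$, and that this ratio has a limit (it is nondecreasing along the integers since $c(i)\le\bar c(i)$ coefficientwise... one must check monotonicity of the ratio of coefficients, which follows from log-convexity of $\lambda$ and $\bar\lambda$ via Theorem \ref{main-1}). \textbf{Step 5:} Feed this back into \eqref{eqn-cabar2} and \eqref{eqn-cabar}, using a Laplace/concentration estimate (Theorem \ref{main-1} and Section 3) to evaluate the integrals asymptotically; the two inequalities together force $\bar c(i)/c(i)\to 1$ as $i\to\infty$, hence $f_1/f_0\to 1$, hence (since $f_1\ge f_0$ and both are entire of the same order with the same leading asymptotics) $f_1\equiv f_0$, contradicting Step 3. \textbf{Step 6:} Conclude $\sup Q\notin Q$, and by symmetry $\inf Q\notin Q$. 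The main obstacle is Step 5: getting the error terms in the concentration estimate uniformly small relative to the strict gap in the coefficient inequalities.
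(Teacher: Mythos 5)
There is a genuine gap, and it stems from using the wrong member of the pair \eqref{eqn-cabar2}--\eqref{eqn-cabar}. After your normalization the hypothesis ``$\sup Q$ attained at $i_0$'' gives the coefficientwise bound $c(i)\le\bar c(i)$, hence $f_0/f_1\le 1$ pointwise; the inequality you should then invoke is \eqref{eqn-cabar}, whose integrand contains $f_0/f_1$, not \eqref{eqn-cabar2}, whose integrand contains $f_1/f_0$. Indeed, at $i=i_0$ (so $\delta_{0i_0}=0$) \eqref{eqn-cabar} reads
$$1=\frac{c(i_0)}{\bar c(i_0)}<\int_0^\infty\frac{\bar c(i_0)x^{i_0}}{f_1}\,\frac{f_0}{f_1}\,dx\le\int_0^\infty\frac{\bar c(i_0)x^{i_0}}{f_1}\,dx=1,$$
an immediate contradiction. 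This is exactly the paper's two-line proof (stated there without renormalizing: with $M=\sup Q>1$ one has $c_i\le M\bar c_i$ for all $i\ge 1$ and $c_0=\bar c_0<M\bar c_0$, so $f_0/f_1<M$ everywhere, and \eqref{eqn-cabar} at the maximizing index gives $M\le\int\frac{\bar c(n)x^n}{f_1}\frac{f_0}{f_1}dx<M$). Because you tested the hypothesis only against \eqref{eqn-cabar2}, which bounds the integral from the wrong side, you concluded that ``this alone gives no contradiction'' and replaced the elementary argument by the asymptotic program of Steps 4--5; none of the refined estimates of Sections 3--4 are actually needed for this lemma.

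That replacement program, as written, does not constitute a proof. The claim that $\bar c_{[a]}/c_{[a]}$ is nondecreasing does not follow from the coefficientwise inequality $c(i)\le\bar c(i)$ (a ratio of two increasing-in-$i$ quantities need not be monotone, and log-convexity of $\lambda,\bar\lambda$ gives no comparison between them); the existence of a limit $L$ of $f_1/f_0$ is asserted, not proved; the uniform-in-$i$ concentration estimate in Step 5 is acknowledged as the main difficulty but never carried out; and the final inference ``$f_1/f_0\to1$, $f_1\ge f_0$, same order $\Rightarrow f_1\equiv f_0$'' is not valid as stated. There is also a smaller inconsistency at the start: rescaling $f_1$ so that $c(i_0)=\bar c(i_0)$ destroys the normalization $\bar c_0=1$, so to get domination at $i=0$ (and hence $f_1\ge f_0$) you need the preliminary fact $\sup Q>1$ under the fixed normalization $c_0=\bar c_0=1$ (it follows from the $i=0$ balanced condition $\int_0^\infty f_0^{-1}dx=\int_0^\infty f_1^{-1}dx=1-\beta$ together with $f_0\not\equiv f_1$), which you never establish; the paper records this as the opening line of its proof.
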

	\begin{proof}
		First of all, we have $\sup Q>1$ and $\inf Q<1$. 
		Assume that the maximum $M$ of $Q$ is attained at $i=n$, then we have $\frac{f_0}{f_1}<M$ for all $x$. Then we have 
		$$M(1-\beta\delta_{0n})=\frac{c(n)}{\bar{c}(n)} (1-\beta\delta_{0n})\leq\int_{0}^{\infty}\frac{\bar{c}(n)x^n}{f_1}\frac{f_0}{f_1}dx<M(1-\beta\delta_{0n}),$$
		a contradiction. The proof for the minimum is the same.
	\end{proof}
		In the following, we will put a bar over a quantity or function to denote the corresponding quantity or function for $f_1$. 
	Denote $k_i=\frac{c(i)}{\bar{c}(i)}$. From Lemma \ref{lem-Q}, one sees that 
	$$\inf Q=\liminf_{i\to +\infty}k_i,$$
	and $$\sup Q=\limsup_{i\to +\infty}k_i.$$
	So $\liminf_{i\to +\infty}k_i\neq \limsup_{i\to +\infty}k_i$.
	Let $a_2$ (not necessarily an integer) be a local minimum point of $k_i$ as a function of $i$, satisfying that $k_i>k_{a_2}$ for all $i<a_2$. Let $a_1$ be the integer satisfying the following:
	\begin{itemize}
		\item $a_1<a_2$;
		\item $k_i<k_{a_1}$ for $a_1<i\leq a_2$; 
		\item $k_i\leq k_{a_1}$ for $i<a_1$.
	\end{itemize}
	By Lemma \ref{lem-Q}, it is clear that we can choose $a_1$, hence $a_2$, arbitrarily large. So our notations $o(\frac{1}{a_1})$ and $o(1)$ in the following make sense.

	With the notation from the preceding section, let $\lambda(i)=-\log c_i$ and $\bar{\lambda}=-\log \bar{c}_i$. 
	Let 
	$$\epsilon=\max_{a_1\leq a\leq a_2} |\lambda''(a)-\bar{\lambda}''(a)|\cdot a=o(1),$$
	then since $\lambda'(a_2)-\bar{\lambda}'(a_2)=0$,
	$$\log \frac{k_{a_1}}{k_{a_2}}\leq \frac{\epsilon}{2a_1}(a_2-a_1)^2.$$
	So $a_2-a_1\geq \sqrt{\frac{2\log \frac{k_{a_1}}{k_{a_2}}}{\epsilon}}\sqrt{a_1}$. In other words, $a_2$ is not close to $a_1$.
	
	When $f=f_0$, let $x_1$ satisfy $x_1=x'_{a_1}$, namely $\log x_1=\lambda'(a_1)$. Similarly, we define 
	$\log \bar{x}_1=\bar{\lambda}'(a_1)$, $\log \bar{x}_2=\log x_2=\lambda'(a_2)$. 
	So $\log x_1-\log \bar{x}_1=o(\frac{1}{a_1})$. Since $f_0(x_1)\approx \sqrt{2\pi a_1}c_{a_1}x_1^{a_1}$ and $f_1(x_1)\approx \sqrt{2\pi a_1}\bar{c}_{a_1}x_1^{a_1}$, we have
	$$\frac{f_0(x_1)}{f_1(x_1)}=(1+o(1))\frac{c_{a_1}}{\bar{c}_{a_1}}.$$
	And similarly
	$$\frac{f_0(x_2)}{f_1(x_2)}=(1+o(1))\frac{c_{a_2}}{\bar{c}_{a_2}}.$$
	Consider the integral $$I_1=\int_{0}^{\infty}\frac{\bar{c}_{a_1}x^{a_1}}{f_1}\frac{f_0}{f_1}dx.$$ First notice that
	$G(t)=-\log (\frac{\bar{c}_{a_1}x^{a_1}}{f_1}\frac{f_0}{f_1})$ is again a convex function of $t=\log x$ with $G''(t)=\frac{(1+o(1))}{x}$. We have $\int_{0}^{\infty}\frac{\bar{c}_{a_1}x^{a_1}}{f_1}dx=1$, we now estimate $\frac{f_0}{f_1}$.
	
	\begin{lem}\label{lem-ca1}
		For $x\leq x_2$, we have
		$$\frac{f_0(x)}{f_1(x)}<\frac{c_{a_1}}{\bar{c}_{a_1}}.$$
	\end{lem}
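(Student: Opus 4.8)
My plan is to pass from the integral inequality to a direct comparison of the Taylor coefficients of $f_0$ and $f_1$. Writing $k_i=c(i)/\bar c(i)$ (so $c_i=k_i\bar c_i$), we have for every $x>0$
\[
k_{a_1}f_1(x)-f_0(x)=\sum_{i\ge 0}(k_{a_1}-k_i)\bar c_i x^i=S(x)-\sum_{i>a_2}(k_i-k_{a_1})\bar c_i x^i,\qquad S(x):=\sum_{i\le a_2}(k_{a_1}-k_i)\bar c_i x^i .
\]
By the three defining properties of $a_1$ one has $k_i\le k_{a_1}$ for all $i\le a_2$, so $S(x)\ge 0$; moreover $k_{a_2}<k_{a_1}$ because $a_1<a_2$, whence $S(x)\ge(k_{a_1}-k_{a_2})\,\bar c_{a_2}x^{a_2}>0$ (reading $a_2$ as the nearest integer below it, which still exceeds $a_1$). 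Since the terms in the second sum with $k_i\le k_{a_1}$ only help, the lemma reduces to proving $\Sigma(x):=\sum_{i>a_2,\,k_i>k_{a_1}}(k_i-k_{a_1})\bar c_i x^i<S(x)$ for all $x\le x_2$.

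For the tail of $f_1$ I would use convexity of $\bar\lambda$ together with $\log x\le\log x_2=\bar\lambda'(a_2)$: for $i>a_2$ and $x\le x_2$,
\[
\bar c_i x^i=\bar c_{a_2}x^{a_2}\,\exp\!\big(-(i-a_2)(\bar\lambda'(a_2)-\log x)\big)\exp\!\big(-\big(\bar\lambda(i)-\bar\lambda(a_2)-(i-a_2)\bar\lambda'(a_2)\big)\big)\le\bar c_{a_2}x^{a_2}\,e^{-(\bar\lambda(i)-\bar\lambda(a_2)-(i-a_2)\bar\lambda'(a_2))}.
\]
By Theorem \ref{main-1}, $\bar\lambda''(s)=(1+o(1))/s$, so the last exponent is $\ge\kappa_0(i-a_2)^2/a_2$ for $a_2<i\le 2a_2$ with a fixed $\kappa_0\in(0,\tfrac12)$, and is $\ge\kappa_0 a_2$ for $i>2a_2$; hence $\sum_{i>2a_2}\bar c_i x^i$ is negligible and $\sum_{a_2<i\le 2a_2}\bar c_i x^i\le\bar c_{a_2}x^{a_2}\sum_{j\ge1}e^{-\kappa_0 j^2/a_2}$. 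To locate the "bad" indices I would again invoke Theorem \ref{main-1}: $(\log k)''(s)=\bar\lambda''(s)-\lambda''(s)=o(1/s)$, so $|(\log k)''(s)|\le\psi(a_2)/a_2$ on $[a_2,2a_2]$ with $\psi(a_2)=o(1)$; since $a_2$ is a local minimum of $k$ we have $(\log k)'(a_2)=0$, hence $|\log k_{a_2+j}-\log k_{a_2}|\le\psi(a_2)j^2/(2a_2)$ for $0\le j\le a_2$. In particular $k_{a_2+j}\le k_{a_2}e^{\psi(a_2)/2}<k_{a_1}$ for $j\le\sqrt{a_2}$ once $a_2$ is large, so every bad index in $(a_2,2a_2]$ satisfies $i-a_2>\sqrt{a_2}$ (and the bad indices beyond $2a_2$, where still $k_i\le\sup Q$, contribute negligibly).

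Combining these, $\Sigma(x)\le(\sup Q)\,\bar c_{a_2}x^{a_2}\big(\sum_{j>\sqrt{a_2}}e^{-\kappa_0 j^2/a_2}+o(1)\big)$, and I would conclude by checking $\Sigma(x)=o(S(x))$ uniformly in $x\le x_2$. When the mode of $i\mapsto\bar c_i x^i$ lies within $\sqrt{a_2}$ of $a_2$ one bounds $S(x)\gtrsim(k_{a_1}-k_{a_2})\sqrt{a_2}\,\bar c_{a_2}x^{a_2}$, using the estimate $|(\log k)''|\,a\le\epsilon$ on $[a_1,a_2]$ (from the definition of $\epsilon$ preceding the lemma) to keep $k_{a_1}-k_i$ bounded below for $|i-a_2|\le\sqrt{a_2}$; when the mode lies below $a_2-\sqrt{a_2}$, the factor $\bar c_{a_2}x^{a_2}$, hence all of $\Sigma(x)$, is already exponentially smaller than the mass of $f_1$ near its mode, which is absorbed into $S(x)$. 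This gives $k_{a_1}f_1(x)-f_0(x)=S(x)-\Sigma(x)+(\text{nonnegative})>0$ on $(0,x_2]$, which is the claim.

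The main obstacle is exactly this last comparison — forcing "$\Sigma(x)=o(S(x))$" to hold uniformly on $(0,x_2]$. It relies on the refined second–order information of Theorem \ref{main-1} (on $\bar\lambda''$ to produce the Gaussian decay of the tail of $f_1$, and on $\lambda''-\bar\lambda''$ to push the indices where $k_i>k_{a_1}$ a distance $\gg\sqrt{a_2}$ beyond $a_2$), on the estimate for $(\log k)''$ on $[a_1,a_2]$, and on the freedom — available since $\liminf_i k_i\ne\limsup_i k_i$ — to take $a_2$, hence $a_1$, so large that $k_{a_1}/k_{a_2}$ stays bounded away from $1$; with less care the factor $\sqrt{a_2}$ coming from the width of the Gaussian tail is not obviously beaten.
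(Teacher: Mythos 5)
Your reduction of the lemma to the comparison $\Sigma(x)<S(x)$ is in the same spirit as the paper's argument (split the sum at $a_2$, use $k_i\le k_{a_1}$ below $a_2$, and control the tail above $a_2$ by the Gaussian concentration from Theorem \ref{main-1} together with the flatness of $\log k_i$), but the quantitative step that is supposed to close the argument is missing, as you yourself concede. Pushing the ``bad'' indices only a distance $\sqrt{a_2}$ beyond $a_2$ is not enough: with that threshold your estimate gives $\Sigma(x)\lesssim \bar c_{a_2}x^{a_2}\sum_{j>\sqrt{a_2}}e^{-\kappa_0 j^2/a_2}\asymp C\sqrt{a_2}\,\bar c_{a_2}x^{a_2}$, which is of exactly the same order as your lower bound $S(x)\gtrsim\bigl(k_{a_1}-k_{a_2}e^{\psi/2}\bigr)\sqrt{a_2}\,\bar c_{a_2}x^{a_2}$ in the critical regime where the mode of $i\mapsto\bar c_i x^i$ sits near $a_2$ (i.e.\ $x$ near $x_2$). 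The constants are not under control --- in particular your factor $\sup Q$ is not even known to be finite; the paper never proves $\limsup_i k_i<\infty$ and its proof does not need it --- so $\Sigma(x)<S(x)$ does not follow. The missing idea is precisely the paper's auxiliary index $a_3$: since $(\log k)''=o(1/i)$, $(\log k)'(a_2)=0$, and $\log(k_{a_1}/k_{a_2})$ is bounded below (one chooses $a_2$ with $k_{a_2}$ close to $\inf Q<1\le k_{a_1}$), the indices where $k_i$ can exceed $1$ (let alone $k_{a_1}$) lie at distance at least $\sqrt{\log(k_{a_1}/k_{a_2})/\epsilon}\,\sqrt{a_2}\gg\sqrt{a_2}$ beyond $a_2$; only with a cutoff that is a divergent multiple of $\sqrt{a_2}$ does the Gaussian tail become a vanishing fraction of the mass accumulated on $(a_2,a_3]$.

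Note also that your own hypotheses would supply this sharper threshold on $(a_2,2a_2]$ (a bad index there forces $\psi j^2/(2a_2)\ge\log(k_{a_1}/k_{a_2})$, i.e.\ $j\gtrsim\sqrt{a_2/\psi}$, and the weight $k_i-k_{a_1}\le k_{a_2}e^{\psi j^2/(2a_2)}$ is still beaten by $e^{-\kappa_0 j^2/a_2}$ since $\psi=o(1)$), but for $i>2a_2$ you would still need some upper bound on $k_i$, which you do not have. The paper sidesteps this entirely: it bounds $f_0$'s own tail, $\sum_{i\ge a_2}c_ix^i\le(1+\delta)\sum_{a_2<i\le a_3}c_ix^i$ (valid because the mode $n_x\le a_2$ and $a_3-a_2\gg\sqrt{a_2}$), transfers to $f_1$ via $k_i\le1$ on $(a_2,a_3]$, and then chooses $\delta$ with $(1+\delta)^2<k_{a_1}$, so no control of $k_i$ in the far tail is ever required. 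As written, therefore, your proposal has a genuine gap at the decisive comparison; it can be repaired, but only by importing the $a_3$-construction (equivalently, the threshold $\gg\sqrt{a_2}$) that is the substance of the paper's proof.
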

	\begin{proof}
		Recall that the summation $\sum_{i=0}^\infty c_ix^i$, when $x$ is large, is concentrated on the terms $c_ix^i$ for $i$ close to $a(x)$. By our choice, we have $\lambda'(a_1)-\bar{\lambda}'(a_1)=o(\frac{1}{a_1})$. So $x_1-\bar{x}_1=o(1)$. Since $\frac{d^2}{di^2}\log k_i=o(\frac{1}{i})$, we have that within a neighborhood of the form $|i-a_2|\leq \frac{\sqrt{a_2}}{o(1)}$, $$k_i\leq 1.$$
		Then we can use integral to estimate the summations $\sum c_i x_1^i$ and $\sum \bar{c}_i x_1^i$. For any $\delta>0$, we have $$\int_A^\infty e^{-x^2}dx<\epsilon \int_{A-2}^A e^{-x^2}dx,$$
		for $A$ large enough. Then it is easy to see that we can find $a_3>a_2$, so that 
		\begin{itemize}
			\item $k_i\leq 1$ for $a_2\leq i\leq a_3$;
			\item $a_3-a_2>\sqrt{\frac{-\log k_{a_2}}{\epsilon}}\sqrt{a_2}$
		\end{itemize}
		Then for any $\delta>0$, we have
		\begin{eqnarray*}
			\sum_{i\geq a_2}c_i x_1^i&<&(1+\delta)\sum_{a_3\geq i>a_2}c_i x_1^i   \\
			&<&(1+\delta)^2\sum_{a_3\geq i>a_2}\bar{c}_i x_1^i,
		\end{eqnarray*} 
		for $a_2$ large enough.
		So we have $$\frac{f_0(x_1)}{f_1(x_1)}<\frac{c_{a_1}}{\bar{c}_{a_1}}.$$
		Then since $x_2=x_2'$, we can repeat the argument to show that the conclusion holds for $x=x_2$. Then for $x<x_2$, we have that $n_x<a_2$ and $\bar{n}_x<a_2$. So the argument above still works. 
	\end{proof}
	
	Now we are ready to draw a contradiction. 

		We consider the function $\nu(x)=\frac{\bar{c}_{a_1}x^{a_1}}{f_1}\frac{f_0}{f_1}$. Firstly, we have 
		$$\frac{f_0(x_1)}{f_1(x_1)}=(1+o(1)) \frac{c_{a_1}}{\bar{c}_{a_1}}.$$
		By corollary \ref{cor-ta}, we have $\bar{x}_{a_1}'=x_1+o(\sqrt{x_1})$, namely the maximum point of the function $\frac{\bar{c}_{a_1}x^{a_1}}{f_1}$ is close to $x_1$. So we have $\bar{h}_{a_1}(x_1)=(1+o(1))\bar{h}_{a_1}(\bar{x}_{a_1}')$. 
		Let $x_\nu$ be the maximum point of $\lambda(x)$, then we have 
		$$\nu(x_\nu)<k_{a_1}\frac{1}{\bar{h}_{a_1}(\bar{x}_{a_1}')}.$$
		Therefore $$x_\lambda-x_1=o(\sqrt{x_1}).$$
		We adapt the notations in the proof of Lemma \ref{lem-ca1} and define $x_3=\bar{x}'_{a_3}$, namely $\log x_3=\bar{\lambda}'(a_3)$. Then by Corollary \ref{cor-dxa}, we have $x_3-x_2=(1+o(1))(a_3-a_2)$ and $$x_2-x_1>(1+o(1))\sqrt{\frac{2\log \frac{k_{a_1}}{k_{a_2}}}{\epsilon}}\sqrt{a_1}.$$
		Then as in the proof of Lemma \ref{lem-ca1}, for any $\delta>0$, we have
		$$\int_{x_2}^{\infty }\nu(x)dx \leq (1+\delta)\int_{x_2}^{x_3}\nu(x)dx,$$
		for $a_2$ large enough.
		
	For $x\in (x_2,x_3)$,  we can use Corollary \ref{cor-f} to estimate $\nu(x)<(1+\delta)\frac{\bar{c}_{a_1}x^{a_1}}{f_1}$. 
		Therefore, we get
		$$\int_{x_2}^{\infty }\nu(x)dx \leq (1+\delta)^2\int_{x_2}^{x_3}\frac{\bar{c}_{a_1}x^{a_1}}{f_1}dx,$$
		for $a_2$ large enough. 
		
		So we just need to take $\delta$ so that $(1+\delta)^2<k_{a_1}$, then by Lemma \ref{lem-ca1},
		$$\int_{0}^{\infty}\frac{\bar{c}_{a_1}x^{a_1}}{f_1}\frac{f_0}{f_1}dx<\frac{c_a}{\bar{c}_a}.$$
		This contradicts \eqref{eqn-cabar}.  Therefore we have completed the proof of the uniqueness part in Theorem \ref{t:main}.

	\begin{proof}[Proof of Corollary \ref{cor-main}]
		We write $e_\beta(x)=\sum c_n(\beta) x^n$. 
	If we let $f_0=e_\beta, \beta>0$ and $f_1(x)=e^x$, then assume that $\liminf_{n\to \infty}k_n\neq \limsup_{n\to \infty}k_n$, we can run the argument above again to get a contradiction. So we must have
	$\liminf_{n\to \infty}k_n=\limsup_{n\to \infty}k_n$, which may be infinity. Since $\int_{0}^{\infty}\frac{1}{f_0(x)}dx<1$, we get that $\sup Q>1$. Then if we repeat the proof of Lemma \ref{lem-Q}, we get that $\sup Q\notin Q$. So we have $\limsup_{n\to \infty}k_n=\sup Q$. Then if $\inf Q\leq 1$, then $\inf Q$ is attained for some $k_n\in Q$, and then we can repeat the proof of Lemma \ref{lem-Q} again to get a contradiction. Therefore 
	$$c_n(\beta)>\frac{1}{n!},$$ for all $n\geq 1$.
	
	We can also get $c_n(\beta)>c_n(\gamma)$ for $\gamma<\beta$, by letting $f_1=e_\gamma$. Therefore, if we let $$\bar{c}_n(\gamma)=\inf_{\beta>\gamma}c_n(\beta),$$
	then the entire function $f(x)=\sum_{i=0}^\infty \bar{c}_n(\gamma)x^n$ satisfies \eqref{e-2} with $\beta$ replaced by $\gamma$. To see this, one only needs to use again the property that for fixed $n$, for all $\epsilon>0$, There exists $ N>0$ such that $\int_{N}^{\infty}\frac{x^n}{e^x}dx<\epsilon$.
	So by the uniqueness theorem, $f(x)=e_\gamma (x)$. 
	So we have proved the upper semi-continuity of $c_n(\beta)$. The lower semi-continuity is similar. So by Dini's Theorem,  we have proved corollary \ref{cor-main}.
	\end{proof}

	\

	\bibliographystyle{plain}

	\bibliography{references}
\end{document}